\documentclass[12pt]{article}
\usepackage{amsmath,amssymb,amsthm}
\pagestyle{plain}

\newtheorem{thm}{Theorem}[section]

 \newtheorem{cor}{Corollary}[section]
 \newtheorem{lem}{Lemma}[section]
 \newtheorem{prop}{Proposition}[section]
 \newtheorem{defn}{Definition}[section]
\newtheorem{rem}{Remark}[section]

\begin{document}
\begin{center}
{\large{\bf Frequency-localization Duhamel principle and its application to the optimal decay of dissipative systems}}
\end{center}
\begin{center}
\footnotesize{Jiang Xu}\\[2ex]
\footnotesize{Department of Mathematics, \\ Nanjing
University of Aeronautics and Astronautics, \\
Nanjing 211106, P.R.China,}\\
\footnotesize{jiangxu\underline{ }79@nuaa.edu.cn}

\vspace{3mm}

\footnotesize{Faculty of Mathematics, \\ Kyushu University, Fukuoka 819-0395, Japan}\\

\vspace{10mm}

\footnotesize{Shuichi Kawashima}\\[2ex]
\footnotesize{Faculty of Mathematics, \\ Kyushu University, Fukuoka 819-0395, Japan,}\\
\footnotesize{kawashim@math.kyushu-u.ac.jp}\\
\end{center}
\vspace{6mm}

\begin{abstract}
Very recently, a new decay framework has been given by \cite{XK2} for linearized dissipative hyperbolic systems satisfying
the Kawashima-Shizuta condition on the framework of Besov spaces, which allows to pay less attention on the traditional spectral analysis. However, owing to interpolation techniques, the analysis remains valid only for nonlinear systems in higher dimensions $(n\geq3)$ and
the corresponding case of low dimensions was left open, which provides the main motivation of this work. Firstly, we develop
new time-decay properties on the frequency-localization Duhamel principle. Furthermore, it is shown that the classical solution and its derivatives of fractional order decay at the optimal algebraic rate in dimension 1, by time-weighted energy approaches in terms of low-frequency and high-frequency decompositions. Finally, as applications, decay results for several
concrete systems subjected to the same dissipative structure as general hyperbolic systems, for instance, damped compressible Euler equations, thermoelasticity with second sound and Timoshenko systems with equal wave speeds, are also obtained.
\end{abstract}

\noindent\textbf{AMS subject classification.} 35L60;\
35L45;\ 35F25;\ 35B40.\\
\textbf{Key words and phrases.} Decay estimates; compressible Euler equations; thermoelasticity; Timoshenko system; Duhamel principle; Besov spaces

\section{Introduction}
As a continuous work of \cite{XK1,XK2}, we still consider a class of partial differential equations,  whose form is given by
\begin{equation}
U_{t}+\sum_{k=1}^{n}F^{k}(U)_{x_{k}}=G(U). \label{R-E1}
\end{equation}
Here $U=U(t,x)$ is the unknown $N$-vector valued function of $(t,x)\equiv(t,x_{1},x_{2},\cdot\cdot\cdot,x_{n})\in [0,+\infty)\times \mathbb{R}^{n}(n\geq1)$,
taking values in an open convex set $\mathcal{O}_{U}\subset
\mathbb{R}^{N}$ (called the state space). $F^{k}$ and $G$ are given
$N$-vector valued smooth functions on $\mathcal{O}_{U}$. The subscripts $t$ and $x_{k}$ refer to the partial derivatives with respect to $t$ and $x_{k}$, respectively.

In this paper, we focus on the Cauchy problem of (\ref{R-E1}), so the initial data are prescribed as
\begin{equation}
U_{0}=U(0,x),\ \
  x\in\mathbb{R}^{n}. \label{R-E2}
\end{equation}
It is well known that, if the source $G(U)\equiv0$, system (\ref{R-E1}) reduces to a system of conservation laws,
the main feature of which is the finite time blowup of classical solutions even when the initial data are smooth and small.
If $G(U)\neq0$, system (\ref{R-E1}) describes a great number of non-equilibrium physical process.
Important examples occur in chemically reactive flows, radiation hydrodynamics, invisicid gas dynamics with relaxation, nonlinear optics and so
on. See \cite{Y1} and reference cited therein. In these applications, $G(U)$ has, or can be transformed by a
linear transformation into, the following partial form
$$G(U)=\left(
 \begin{array}{c}
                                                                   0 \\
                                                                  g(U) \\
                                                                 \end{array}
                                                               \right),
$$
with $0\in \mathbb{R}^{N_{1}}, g(U)\in \mathbb{R}^{N_{2}}$, where
$N_{1}+N_{2}=N(N_{1}\neq0)$. Obviously, the source is not
present in all the components of the system. Due to the great potential for applications, mathematically, the system has been receiving increasing attention
over the past two decades. \textit{What conditions are posted on (\ref{R-E1}) such that it could prevent the finite time blowup of classical solutions at least for some restricted classes of initial data? If so, how does the solution decay in time with some explicit rates?} Let us briefly review previous efforts around these questions.

Chen, Levermore and Liu \cite{CLL} first presented a strictly convex entropy for (\ref{R-E1}), which is a natural extension of
the entropy due to Godunov \cite{G}, Friedrichs and Lax \cite{FL} for conservation laws. For (\ref{R-E1})  endowed with
the entropy function, an entropy dissipation condition was additionally assumed to develop a general result of global existence for small perturbations of equilibrium constant states by Hanouzet and Natalini \cite{HN} in one space dimension and Yong \cite{Y}
in several space dimensions, by assuming all the time the Kawashima-Shizuta condition. Subsequently, the second author and Yong \cite{KY}
gave a perfect definition for the dissipative entropy to establish the global existence of classical solutions without
the technical requirement assumed in \cite{HN,Y}. Furthermore, it was shown by
Ruggeri and Serre \cite{RS} that the constant equilibrium state $\bar{U}$ is time asymptotically $L^2$-stable for (\ref{R-E1}).
With the aid of a detailed analysis of the Green function for the linearized problem, Bianchini, Hanouzet and Natalini \cite{BHN}
showed the $L^p$-decay rates in the decay framework of $H^{s}\cap L^1$, which was first established in the doctoral thesis by
the second author \cite{Ka}, and great developed by Hoff and Zumbrun \cite{HZ} for compressible Navier-Stokes equations. Later, the second author and Yong \cite{KY2} employed time-weighted energy approaches initialled by Matsumura in \cite{Ma} and obtained the $L^p$ decay estimates, which pays less attention on the spectral analysis in comparison with \cite{BHN}.

It should be mentioned that above efforts achieved fell into the framework of local-in-time existence theory of Kato \cite{K} and Majda \cite{M}
for quasilinear hyperbolic systems, where the regularity index of spatially Sobolev spaces satisfies $s>1+n/2$.
Recently, the first and second authors investigated the critical index $s_{c}=1+n/2$, where the basic existence theory of Kato and Majda
fails. With the assumption of dissipative entropy in \cite{KY}, the local-in-time existence and blow-up criterion of classical solutions
was established in spatially critical Besov spaces. Furthermore,
an elementary fact that indicates the relation between homogeneous and inhomogeneous Chemin-Lerner spaces (mixed space-time Besov spaces) was well developed.
This fact enables us to capture the dissipation rates generated from the partial source term and obtain the general result of global-in-time existence in spatially critical Besov spaces, see \cite{XK1}.
As a continuation investigation, recently, we gave a new decay framework for linearized dissipative systems in $L^2(\mathbb{R}^{n})\cap\dot{B}^{-s}_{2,\infty}(\mathbb{R}^{n})$ (see \cite{XK2}), which generalized the classical one $L^2(\mathbb{R}^{n})\cap L^{p}(\mathbb{R}^{n})(1\leq p<2)$ in \cite{UKS}, since $L^{p}(\mathbb{R}^{n})\hookrightarrow\dot{B}^{-s}_{2,\infty}(\mathbb{R}^{n})(0<s\leq n/2)$. The key ingredient lies in employing the Littlewood-Paley decomposition on the dissipative structure
$${\rm Re}\,\lambda(i\xi)\leq -c\eta_1(\xi), \quad \mbox{with}\quad  \eta_1(\xi)=\frac{|\xi|^2}{1+|\xi|^2}$$
to obtain a differential inequality, which leads to the desired decay estimate. Actually, such manner goes considerably beyond the usual low-frequency and high-frequency integrals based on the Hausdorff-Young inequality as in \cite{Ka,UKS}. To obtain the corresponding decay estimates for nonlinear systems, we develop localized time-weighted energy approaches in terms of low-frequency and high-frequency decompositions, with the aid of frequency-localization Duhamel principle and improved Gagliardo-Nirenberg-Sobolev inequality (Lemma \ref{lem6.4}) that allows to the case of fractional derivatives. Due to techniques from interpolation inequalities,
the analysis remains valid only for the case of high dimensions $(n\geq3)$, and the optimal decay estimate in low dimensions was left open.

\subsection{Preliminary}
In this paper, we shall give a satisfactory answer in dimension 1 for generally hyperbolic systems of balance laws.
Before stating main results, let us introduce some notations and review structural conditions, which are designed for the global-in-time and decay stability of classical solutions. Set
\begin{eqnarray*}
\mathcal{M}=\{\psi\in\mathbb{R}^{N}: \langle\psi,G(U)\rangle=0\ \
\mbox{for any}\ U\in \mathcal{O}_{U}\}.
\end{eqnarray*}
Then $\mathcal{M}$ is a subset of $\mathbb{R}^{N}$ with $\mathrm{dim}\
\mathcal{M}=N_{1}$. From the definition of $\mathcal{M}$, we have
\begin{eqnarray*}
G(U)\in\mathcal{M}^{\top}(\mbox{the orthogonal complement of}\
\mathcal{M}),\ \mbox{for any}\ U\in \mathcal{O}_{U}.
\end{eqnarray*}
Furthermore, corresponding to the orthogonal decomposition
$\mathbb{R}^{N}=\mathcal{M}\oplus\mathcal{M}^{\top}$, we may write
$U\in\mathbb{R}^{N}$ as
\begin{eqnarray*}
U=\left(
    \begin{array}{c}
      U_{1} \\
      U_{2} \\
    \end{array}
  \right)
\end{eqnarray*}
such that $U\in\mathcal{M}$ holds if and only if $U_{2}=0$. We
denote by $\mathcal{E}$ the set of equilibrium state for the balance
laws (\ref{R-E1}):
\begin{eqnarray*}
\mathcal{E}=\{U\in \mathcal{O}_{U}: G(U)=0\}.
\end{eqnarray*}
For $\bar{U}\in \mathcal{E}$, we define
$U=\bar{U}+D_{W}U(\bar{W})z$, where $W$ is the symmetrized variable related to the entropy function endowed
and $\bar{W}\in \mathcal{M}$ is the corresponding equilibrium of $\bar{U}$, see \cite{KY2} for details.
Then (\ref{R-E1}) can be rewritten as
\begin{equation}
A^{0}z_{t}+\sum_{k=1}^{n}A^{k}z_{x_{j}}+Lz=\sum_{k=1}^{n}P^{k}_{x_{k}}+Q, \label{R-E3}
\end{equation}
where
$A^{0}=A^{0}(\bar{W}),
A^{k}=A^{k}(\bar{W})$ and $L=L(\bar{W})$ are constant
matrices, and
$$P^{k}=-[F^{k}(U)-F^{k}(\bar{U})-D_{U}F^{k}(\bar{U})(U-\bar{U})],$$
$$Q=G(U)-G(\bar{U})-D_{U}G(\bar{U})(U-\bar{U}).$$
Notice that $P:=(P^{1},P^{2},\cdot\cdot\cdot,P^{n})=O(z^2)$, $ Q=O(z^2)\in \mathcal{M}^{\bot}$ satisfying $P(0)=P'(0)=0$ and $Q(0)=Q'(0)=0$.
The initial datum is given correspondingly by
\begin{equation}
z|_{t=0}=z_{0}. \label{R-E4}
\end{equation}

Denote
\begin{eqnarray}
\widehat{\mathcal{G}f}(t,\xi)=e^{t\Phi(i\xi)}\hat{f}(\xi), \label{R-E5}
\end{eqnarray}
where $$\Phi(i\xi)=-(A^{0})^{-1}[A(i\xi)+L]$$ with
$A(i\xi)=i\sum_{k=1}^{n}A^{k}\xi_{k}$. Then $\mathcal{G}(t,x)z_{0}$ is the solution of linearized system
\begin{equation}
\left\{
\begin{array}{l}
A^{0}z_{t}+\sum_{k=1}^{n}A^{k}z_{x_{k}}+Lz=0,\\
z|_{t=0}=z_{0}.
\end{array}\right. \label{R-E6}
\end{equation}

Next, we state structural assumptions in \cite{SK,UKS} addressed by the second author and his collaborators. The first assumption ensures the system
(\ref{R-E1}) is ``symmetric hyperbolic" in the sense that

\noindent \textbf{Condition (A):} the matrix $A^{0}$ is real symmetric and positive definite, $A^{k}(k=1,2,\cdot\cdot\cdot,n)$ are real symmetric. $L$ is real symmetric and nonnegative definite, whose kernel coincides with $\mathcal{M}$.

Further assume the following condition which is referred as ``Kawashima-Shizuta" algebraic condition.

\noindent \textbf{Condition (K):} There is a real compensating matrix $K(\omega)\in C^{\infty}(\mathbb{S}^{n-1})$ with the following properties: $K(-\omega)=-K(\omega), (K(\omega)A^{0})^{\top}=-K(\omega)A^{0}$ and
\begin{eqnarray}
(K(\omega)A(\omega))_{1}>0   \ \ \ \mbox{on}\ \mathcal{M} \label{R-E7}
\end{eqnarray}
for each $\omega\in\mathbb{S}^{n-1}$, where $(\cdot)_{1}$ denotes the symmetric part of a matrix.

As shown by \cite{SK,UKS}, the dissipative structure of (\ref{R-E6}) holds by assuming the Condition (A) and Condition (K):
$${\rm Re}\,\lambda(i\xi)\leq -c\eta_1(\xi), \quad \mbox{with}\quad  \eta_1(\xi)=\frac{|\xi|^2}{1+|\xi|^2},$$
where $\lambda(i\xi)$ is the character root of (\ref{R-E6}) in Fourier spaces. Recently, we develop the following decay properties
by performing the L-P pointwise energy estimates and the interpolation inequality in Lemma \ref{lem6.2}.

\begin{prop}\label{prop1.1} (\cite{XK2})
If $z_{0}\in \dot{B}^{\sigma}_{2,1}(\mathbb{R}^{n})\cap \dot{B}^{-s}_{2,\infty}(\mathbb{R}^{n})$ for $\sigma\geq0$ and $s>0$, then the solutions $z(t,x)$ of (\ref{R-E6}) has the decay estimate
\begin{equation}
\|\Lambda^{\ell}z\|_{B_{2,1}^{\sigma-\ell}}\lesssim \|z_{0}\|_{\dot{B}^{-s}_{2,\infty}}(1+t)^{-\frac{\ell+s}{2}}+\|z_{0}\|_{\dot{B}_{2,1}^{\sigma}}e^{-ct} \label{R-E8}
\end{equation}
for $0\leq\ell\leq\sigma$.
\end{prop}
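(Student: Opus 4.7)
The plan is to localize in frequency via the homogeneous Littlewood--Paley decomposition, extract a block-wise dissipation inequality from Conditions~(A) and (K), integrate it to get pointwise-in-$q$ exponential decay with rate $\eta_1(2^q)$, and finally perform the Besov summation with a low/high frequency split.

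First I apply $\dot{\Delta}_q$ to (\ref{R-E6}) and take the $L^2$ inner product with $A^0 \dot{\Delta}_q z$; by Condition~(A) this yields
\[
\tfrac{1}{2}\tfrac{d}{dt}\langle A^0\dot{\Delta}_q z,\dot{\Delta}_q z\rangle + \langle L\dot{\Delta}_q z,\dot{\Delta}_q z\rangle = 0.
\]
Since $\ker L=\mathcal{M}$, the dissipation from $L$ alone only controls the $\mathcal{M}^\perp$-component. To recover dissipation on the full vector I form the Lyapunov functional
\[
E_q(t)=\langle A^0\dot{\Delta}_q z,\dot{\Delta}_q z\rangle + \alpha\,2^q(1+2^{2q})^{-1}\,\mathrm{Im}\,\langle K(\omega)A^0\dot{\Delta}_q z,\dot{\Delta}_q z\rangle,
\]
with $\omega=\xi/|\xi|$, and combine the basic energy identity with the identity obtained by testing against $K(\omega)A^0\dot{\Delta}_q z$; Condition~(K) (equation (\ref{R-E7})) then gives, for $\alpha$ small enough, the differential inequality
\[
\tfrac{d}{dt}E_q(t) + c\,\eta_1(2^q)\,E_q(t)\le 0,
\]
where $E_q(t)\sim \|\dot{\Delta}_q z(t)\|_{L^2}^2$. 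This is exactly the L-P pointwise energy estimate referred to in the introduction, and it integrates to
\[
\|\dot{\Delta}_q z(t)\|_{L^2}\lesssim e^{-c\eta_1(2^q)t}\,\|\dot{\Delta}_q z_0\|_{L^2}.
\]

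Next I split the Besov norm at frequency~$1$. For high frequencies ($q\ge 0$) $\eta_1(2^q)\sim 1$, so the block-wise bound delivers
\[
\sum_{q\ge 0}2^{q\sigma}\|\dot{\Delta}_q\Lambda^\ell z(t)\|_{L^2}\lesssim e^{-ct}\|z_0\|_{\dot{B}^{\sigma}_{2,1}},
\]
using $0\le\ell\le\sigma$ to absorb the $2^{q\ell}$ factor into $2^{q\sigma}$. For low frequencies ($q\le 0$) $\eta_1(2^q)\sim 2^{2q}$, and $\|\dot{\Delta}_q z_0\|_{L^2}\le 2^{qs}\|z_0\|_{\dot{B}^{-s}_{2,\infty}}$ by definition of the $\dot{B}^{-s}_{2,\infty}$-norm, so
\[
\sum_{q\le 0}2^{q\ell}\|\dot{\Delta}_q z(t)\|_{L^2}
\lesssim \|z_0\|_{\dot{B}^{-s}_{2,\infty}}\sum_{q\le 0}2^{q(\ell+s)}e^{-c\,2^{2q}t}.
\]
Since $\ell+s>0$, the standard dyadic estimate (splitting at $2^q\sim t^{-1/2}$, or a change-of-variable comparison with the integral $\int_0^\infty r^{\ell+s-1}e^{-cr^2 t}\,dr$) yields the bound $(1+t)^{-(\ell+s)/2}$. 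Combining the two regimes and using that $\|\Lambda^\ell z\|_{B^{\sigma-\ell}_{2,1}}$ is controlled by the low-frequency $L^2$-mass plus the high-frequency homogeneous sum produces exactly (\ref{R-E8}).

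The main obstacle is the construction of the Lyapunov functional $E_q$ with the $2^q(1+2^{2q})^{-1}$ weight and the verification that Condition~(K) yields a genuine dissipation rate $\eta_1(2^q)$ uniformly on the sphere: one needs $K(\omega)$ smooth in $\omega$, its skew-symmetry against $A^0$, and the positivity (\ref{R-E7}) on $\mathcal{M}$ to cooperate with the partial dissipation from $L$ on $\mathcal{M}^\perp$. Once this differential inequality is in place at the level of a single dyadic block, the remaining analysis is a routine (but careful) Besov summation with low/high frequency split.
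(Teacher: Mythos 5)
Your proof is correct, and its first half --- the Fourier--space Lyapunov functional built from $A^0$ and the compensating matrix $K(\omega)$, yielding $\frac{d}{dt}E[\hat z]+c\,\eta_1(\xi)|\hat z|^2\le 0$ with $E[\hat z]\approx|\hat z|^2$ --- is exactly the ``L-P pointwise energy estimate'' that the paper takes as its starting point (compare (\ref{R-E17})). Where you genuinely diverge is in how the algebraic rate is extracted at low frequency. You integrate each dyadic block exactly, obtaining $\|\dot{\Delta}_q z(t)\|_{L^2}\lesssim e^{-c\eta_1(2^q)t}\|\dot{\Delta}_q z_0\|_{L^2}$, and then sum $\sum_{q\le 0}2^{q(\ell+s)}e^{-c2^{2q}t}\lesssim (1+t)^{-(\ell+s)/2}$; this is the dyadic analogue of the classical low/high-frequency split, and is in fact the very computation the paper itself performs later for the Duhamel term in Lemma \ref{lem3.3} (see (\ref{R-E26})--(\ref{R-E27})). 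The proof of Proposition \ref{prop1.1} in \cite{XK2}, reproduced in miniature in the proof of Lemma \ref{lem3.2} here, instead keeps the estimate in differential form: it multiplies by $|\xi|^{2\ell}$, integrates in $\xi$, interpolates $\|\Lambda^{\ell}z\|_{L^2}\lesssim\|\Lambda^{\ell+1}z\|_{L^2}^{\theta}\|z\|_{\dot B^{-s}_{2,\infty}}^{1-\theta}$ (Lemma \ref{lem6.2}) together with the monotonicity $\|z(t)\|_{\dot B^{-s}_{2,\infty}}\le\|z_0\|_{\dot B^{-s}_{2,\infty}}$ coming from (\ref{R-E20}), and solves the resulting nonlinear differential inequality $\frac{d}{dt}\mathcal{E}^2+C(\mathcal{E}^2)^{1+1/(\ell+s)}\le 0$ as in (\ref{R-E22})--(\ref{R-E23}). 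For this linear constant-coefficient problem the two routes are equivalent and yours is arguably the more elementary; the differential-inequality route is precisely what the authors describe as going beyond ``the usual low-frequency and high-frequency integrals,'' and what it buys is independence from any explicit blockwise solution formula --- it needs only an energy inequality plus propagation of the $\dot B^{-s}_{2,\infty}$ norm, which is why it transfers to less explicit settings. One cosmetic remark: your functional $E_q$ mixes physical-space inner products with the $\omega$-dependent matrix $K(\omega)$; it should be set up pointwise in $\xi$ as a Fourier multiplier, as in (\ref{R-E17}), but this does not affect the substance of the argument.
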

Additionally, we have a analogue on the framework of homogeneous Besov spaces.
\begin{prop} (\cite{XK2}) \label{prop1.2}
If $z_{0}\in \dot{B}^{\sigma}_{2,1}(\mathbb{R}^{n})\cap \dot{B}^{-s}_{2,\infty}(\mathbb{R}^{n})$ for $\sigma\in \mathbb{R}, s\in \mathbb{R}$ satisfying $\sigma+s>0$, then the solution $z(t,x)$ of (\ref{R-E6}) has the decay estimate
\begin{equation}
\|z\|_{\dot{B}_{2,1}^{\sigma}}\lesssim \|z_{0}\|_{\dot{B}^{-s}_{2,\infty}}(1+t)^{-\frac{\sigma+s}{2}}+\|z_{0}\|_{\dot{B}_{2,1}^{\sigma}}e^{-ct}. \label{R-E9}
\end{equation}
\end{prop}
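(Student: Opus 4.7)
The plan is to mimic the proof of Proposition \ref{prop1.1} but stay entirely on the homogeneous Littlewood--Paley side, which actually simplifies the bookkeeping. First, I would apply the homogeneous dyadic block $\dot{\Delta}_q$ to the linearized system (\ref{R-E6}) and freeze in Fourier at a dyadic annulus $|\xi| \sim 2^q$. The dissipative structure ${\rm Re}\,\lambda(i\xi) \leq -c\eta_1(\xi)$ together with the L-P pointwise energy method yields the block-by-block bound
\begin{equation*}
\|\dot{\Delta}_q z(t)\|_{L^2} \lesssim e^{-c\eta_1(2^q)\, t}\,\|\dot{\Delta}_q z_0\|_{L^2}, \qquad q\in\mathbb{Z},
\end{equation*}
which is already embedded in the arguments that produce Proposition \ref{prop1.1}. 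Then I would split the norm
\begin{equation*}
\|z\|_{\dot{B}^{\sigma}_{2,1}} = \sum_{q \leq 0} 2^{q\sigma}\|\dot{\Delta}_q z\|_{L^2} + \sum_{q \geq 1} 2^{q\sigma}\|\dot{\Delta}_q z\|_{L^2}
\end{equation*}
into low-frequency and high-frequency pieces, treating each with the appropriate behaviour of $\eta_1$.

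On the high-frequency side, $\eta_1(2^q) \sim 1$ for $q \geq 1$, so the factor $e^{-c\eta_1(2^q) t}$ can be pulled out as a uniform $e^{-ct}$ and what remains is summable against $\|z_0\|_{\dot{B}^{\sigma}_{2,1}}$, giving the second term on the right-hand side of (\ref{R-E9}). On the low-frequency side, $\eta_1(2^q) \sim 2^{2q}$ for $q \leq 0$, and the defining property of $\dot{B}^{-s}_{2,\infty}$ gives $\|\dot{\Delta}_q z_0\|_{L^2} \leq 2^{qs}\|z_0\|_{\dot{B}^{-s}_{2,\infty}}$. Plugging in,
\begin{equation*}
\sum_{q \leq 0} 2^{q\sigma}\|\dot{\Delta}_q z(t)\|_{L^2} \lesssim \|z_0\|_{\dot{B}^{-s}_{2,\infty}} \sum_{q \leq 0} 2^{q(\sigma+s)} e^{-c\, 2^{2q}\, t}.
\end{equation*}
The geometric sum is comparable to $\int_0^1 u^{\sigma+s-1} e^{-cu^2 t}\,du$; since $\sigma+s>0$, the integral is uniformly bounded for $t \lesssim 1$, and a change of variable $v = cu^2 t$ reduces it to a constant multiple of $t^{-(\sigma+s)/2}$ for $t \gtrsim 1$. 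The two regimes combine into the expected $(1+t)^{-(\sigma+s)/2}$ decay.

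The main technical point, and the only place where the hypothesis $\sigma+s>0$ is really used, is the convergence of the low-frequency sum at $q\to -\infty$; without this condition the geometric series blows up and the $(1+t)^{-(\sigma+s)/2}$ rate cannot be extracted. Everything else reduces to standard dyadic bookkeeping, and the passage from the inhomogeneous estimate (\ref{R-E8}) to the homogeneous estimate (\ref{R-E9}) is essentially a matter of replacing the inhomogeneous block at $q = -1$ with the tail of homogeneous blocks and verifying that the same exponential/integral manipulations still apply. I would therefore expect a short, self-contained argument that closely parallels, but does not invoke, Proposition \ref{prop1.1}.
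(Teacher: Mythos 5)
Your argument is correct: the block bound $\|\dot{\Delta}_q z(t)\|_{L^2}\lesssim e^{-c\eta_1(2^q)t}\|\dot{\Delta}_q z_0\|_{L^2}$ does follow from the L-P pointwise energy estimate for the constant-coefficient system (\ref{R-E6}), the low-frequency sum $\sum_{q\le 0}2^{q(\sigma+s)}e^{-c2^{2q}t}$ is indeed $O((1+t)^{-(\sigma+s)/2})$ precisely when $\sigma+s>0$, and the high-frequency tail gives the $e^{-ct}\|z_0\|_{\dot{B}^{\sigma}_{2,1}}$ term. Note, however, that the paper does not prove Proposition \ref{prop1.2} at all: it is imported from \cite{XK2}, and the route the authors indicate for Propositions \ref{prop1.1}--\ref{prop1.2} is different from yours. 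Their stated method combines the L-P energy estimate with the interpolation inequality of Lemma \ref{lem6.2} (or Lemma \ref{lem6.1}), i.e.\ one keeps the differential inequality $\frac{d}{dt}\mathcal{E}^2+c\|\Lambda^{1}z_L\|^2\le 0$, interpolates the dissipation against the invariant norm $\|z(t)\|_{\dot{B}^{-s}_{2,\infty}}\le\|z_0\|_{\dot{B}^{-s}_{2,\infty}}$, and closes a nonlinear ODE of the type (\ref{R-E22}) to extract the algebraic rate --- exactly as in the proof of Lemma \ref{lem3.2}, (\ref{R-E17})--(\ref{R-E23}). Your direct dyadic summation is instead the technique the paper uses in the proof of Lemma \ref{lem3.3}, (\ref{R-E25})--(\ref{R-E28}), where the uniform bound on $\|(2^{j}\sqrt{t})^{a}e^{-c(2^{j}\sqrt t)^2}\|_{l^r_j}$ plays the role of your integral comparison. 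For the linear semigroup your route is the more elementary of the two and handles the $\ell^1$ summation defining $\dot{B}^{\sigma}_{2,1}$ transparently; the interpolation/ODE route is the one that survives in settings where only a differential inequality, and not a pointwise Fourier decay bound, is available (e.g.\ after adding nonlinear source terms), which is presumably why \cite{XK2} organizes the proof that way.
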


\subsection{Main results}
Based on Propositions \ref{prop1.1}-\ref{prop1.2}, the optimal decay rates for the nonlinear system have been shown in \cite{XK2} by
localized time-weighted energy approaches, with the aid of the frequency-localization Duhamel principle and improved Gagliardo-Nirenberg-Sobolev inequality.
However, decay results hold for higher dimensions ($n\geq3$) due to interpolation techniques. In the dimension 1, indeed, we met with an almost insurmountable obstacle at the low-frequency. To overcome the difficulty, in the current paper, we develop new time-decay properties for frequency-localization Duhamel principle, which lead to
more elaborate low-frequency analysis. Precisely, the low-frequency estimate is divided into two parts, and on each part, different values (for example, $\sigma=0$ or $\sigma=s+1/2)$ of the derivation index $\sigma$ in Lemmas \ref{lem3.3}-\ref{lem3.4} can be chosen, see (\ref{R-E39}), (\ref{R-E41}) and (\ref{R-E47}) for details. On the other hand, we involve a new observation: the advantage of (\ref{R-E3}) rather than the standard normal form adopted in \cite{XK2} lies in the nice nonlinear structure. Observe that the composite functions $P$ and $Q$ both have the form of $O(z^2)$,
which is very helpful to obtain desired decay estimates, see (\ref{R-E42})
and (\ref{R-E48}).

To show main results more explicitly, we first define the functional spaces
$$\widetilde{\mathcal{C}}_{T}(B^{s}_{p,r}):=\widetilde{L}^{\infty}_{T}(B^{s}_{p,r})\cap\mathcal{C}([0,T],B^{s}_{p,r})
$$ and $$\widetilde{\mathcal{C}}^1_{T}(B^{s}_{p,r}):=\{f\in\mathcal{C}^1([0,T],B^{s}_{p,r})|\partial_{t}f\in\widetilde{L}^{\infty}_{T}(B^{s}_{p,r})\},$$
where the index $T$ will be omitted when $T=+\infty$. Among them, Chemin-Lerner
spaces $\widetilde{L}^{\theta}_{T}(B^{s}_{p,r})$ was first initialled by J.-Y. Chemin and N. Lerner in \cite{CL}. The interested reader is also referred to  \cite{BCD} for their definitions.

Now, as a counterpart of \cite{XK2}, we state the decay estimate for the case $n=1$.
\begin{thm}\label{thm1.1}
Let $z(t,x)$ be the global classical solution in the space $\widetilde{\mathcal{C}}(B^{3/2}_{2,1}(\mathbb{R}))\\ \cap
\widetilde{\mathcal{C}}^1(B^{1/2}_{2,1}(\mathbb{R}))$ given by \cite{XK1}. Suppose that $z_{0}\in B^{3/2}_{2,1}(\mathbb{R})\cap \dot{B}^{-s}_{2,\infty}(\mathbb{R})(1/4<s\leq 1/2)$ and the norm
$E_{0}:=\|z_{0}\|_{B^{3/2}_{2,1}(\mathbb{R})\cap \dot{B}^{-s}_{2,\infty}(\mathbb{R})}$ is sufficiently small. Then it holds that
\begin{eqnarray}
\|z(t)\|_{X_{0}}\lesssim E_{0}(1+t)^{-\frac{s+\ell}{2}}\label{R-E10}
\end{eqnarray}
where $X_{0}=:\dot{B}^{\ell}_{2,1}(\mathbb{R})$ if $0<\ell\leq 1/2$, and $X_{0}=:L^2(\mathbb{R})$ if $\ell=0$. In particular,
owing to the embedding $\dot{B}^{0}_{2,1}(\mathbb{R})\hookrightarrow L^2(\mathbb{R})$, one further has
\begin{eqnarray}
\|\Lambda^{\ell}z(t)\|_{L^2(\mathbb{R})}\lesssim E_{0}(1+t)^{-\frac{s+\ell}{2}}\label{R-E11}
\end{eqnarray}
for $0\leq\ell\leq 1/2$.
\end{thm}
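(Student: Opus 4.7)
The strategy is to run a bootstrap on a time-weighted energy built on the Duhamel representation
\begin{equation*}
z(t)=\mathcal{G}(t)z_{0}+\int_{0}^{t}\mathcal{G}(t-\tau)\bigl[\partial_{x}P(z(\tau))+Q(z(\tau))\bigr]\,d\tau,
\end{equation*}
applying the linear estimates (\ref{R-E8})--(\ref{R-E9}) separately to the low-frequency and high-frequency parts of $z$. The global existence result from \cite{XK1} already supplies a uniform (but unweighted) small bound on $\|z\|_{\widetilde{L}^{\infty}(B^{3/2}_{2,1})}$; the task is to upgrade it into the sharp algebraic rate.

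I would introduce a time-weighted functional of the form
\begin{equation*}
\mathcal{E}(t):=\sup_{0\leq\tau\leq t}\Bigl\{(1+\tau)^{s/2}\|z(\tau)\|_{L^{2}}+(1+\tau)^{(s+1/2)/2}\|z(\tau)\|_{\dot{B}^{1/2}_{2,1}}+(1+\tau)^{\alpha}\|z^{H}(\tau)\|_{B^{3/2}_{2,1}}\Bigr\}
\end{equation*}
for a suitable exponent $\alpha$ governing the high-frequency rate, and aim at the closed inequality $\mathcal{E}(t)\lesssim E_{0}+\mathcal{E}(t)^{2}$. The two anchor values $\ell=0$ and $\ell=1/2$ appearing in $\mathcal{E}$ are precisely the ones associated with the two distinct choices $\sigma=0$ and $\sigma=s+1/2$ that will be made in Lemmas \ref{lem3.3}--\ref{lem3.4}; the intermediate rates in (\ref{R-E10}) for $0<\ell<1/2$ then follow by real interpolation in $\dot{B}^{\ell}_{2,1}$, and (\ref{R-E11}) is immediate from the embedding $\dot{B}^{0}_{2,1}\hookrightarrow L^{2}$.

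The high-frequency contribution is routine: Proposition \ref{prop1.1} gives exponential decay of the linear evolution and the Duhamel integral is bounded by $\int_{0}^{t}e^{-c(t-\tau)}\|\partial_{x}P+Q\|_{B^{1/2}_{2,1}}\,d\tau$, after which Moser-type product estimates, together with $P,Q=O(z^{2})$ and the a priori control of $\|z\|_{B^{3/2}_{2,1}}$, yield a quadratic bound in $\mathcal{E}(t)$. The genuine obstacle lies in the low-frequency piece. Applying Proposition \ref{prop1.2} with negative index $-s$ produces the convolution of the kernel $(1+t-\tau)^{-(s+\ell)/2}$ with $\|\partial_{x}P+Q\|_{\dot{B}^{-s}_{2,\infty}}$, and in one space dimension the naive interpolation used in \cite{XK2} controlling this source in $\dot{B}^{-s}_{2,\infty}$ is borderline precisely when $s\leq 1/2$, which is why the previous arguments only closed for $n\geq 3$.

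To overcome it I would follow the hint flagged in the introduction: split the Duhamel integral at $\tau=t/2$ and, on each piece, bound the source in $\dot{B}^{-s}_{2,\infty}$ using different embeddings and different values of $\sigma$ in Lemmas \ref{lem3.3}--\ref{lem3.4}. On $[0,t/2]$ the kernel $(1+t-\tau)^{-(s+\ell)/2}$ is essentially constant, so the quadratic source needs only crude $L^{1}_{\tau}$-integrability and the choice $\sigma=0$ suffices (this is the mechanism appearing in (\ref{R-E39}) and (\ref{R-E41})). On $[t/2,t]$ the source already inherits strong decay from $\mathcal{E}(\tau)$, so one uses the sharper choice $\sigma=s+1/2$ (cf.\ (\ref{R-E47})) and only needs the kernel to be locally integrable. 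The structural feature that in (\ref{R-E3}) both $P$ and $Q$ are genuinely $O(z^{2})$ -- a property lost after the generic normal-form transform adopted in \cite{XK2} -- is crucial: it lets each nonlinear quantity be factored as a product of two copies of $z$, each of which is bounded in a Besov norm already controlled by $\mathcal{E}$. The resulting time-convolution integrals are standard Beta-type integrals whose integrability depends sharply on the range $1/4<s\leq 1/2$, explaining the admissible range of $s$ in the hypotheses. Closing $\mathcal{E}(t)\lesssim E_{0}+\mathcal{E}(t)^{2}$ under smallness of $E_{0}$ gives (\ref{R-E10}).
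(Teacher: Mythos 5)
Your proposal follows essentially the same route as the paper: a time-weighted energy functional closed by a bootstrap, a high-frequency estimate via exponential decay and Moser products, and a low-frequency estimate obtained by splitting the Duhamel integral at $\tau=t/2$ and invoking Lemmas \ref{lem3.3}--\ref{lem3.4} with $\sigma=0$ on $[0,t/2]$ and $\sigma=s+1/2$ on $[t/2,t]$, exploiting $P,Q=O(z^2)$ together with the embedding of Lemma \ref{lem6.5} and the Gagliardo--Nirenberg inequality, which is exactly where the restriction $1/4<s\leq 1/2$ enters. The only cosmetic deviation is that you anchor the weighted norm at $\ell=0$ and $\ell=1/2$ and recover the intermediate rates by interpolation, whereas the paper's functional $\mathcal{E}_{1}(t)$ carries the supremum over all $\ell\in(0,1/2]$ directly; both are equivalent here.
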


\begin{rem}
The proof of Theorem \ref{thm1.1} depends on localized time-weighted energy argument. Here, the new energy functionals not only contain different time-weighted norms according to the derivative index, but allow to the derivative case of fractional order as well, which can be viewed as the improvement
of classical time-weighted functionals in \cite{Ma}. The crucial ingredient of our argument lies in
new time-decay properties for the frequency-localization Duhamel principle. For instance, Lemma \ref{lem3.2} (inhomogeneous version) and Lemmas \ref{lem3.3} (homogeneous version) are used to take care of the variable of derivative form in (\ref{R-E3}), whereas Lemma \ref{lem3.4} (homogeneous version) is responsible for non-degenerate dissipative variables.
\end{rem}

\begin{rem}
Due to the case of one dimension, let us mention that  $s\in (1/4,1/2]$ in Theorem \ref{thm1.1} is needed to be restricted additionally in comparison with those results in higher dimensions (see \cite{XK2}).
\end{rem}

Note that the $L^p(\mathbb{R}^{n})$ embedding in Lemma \ref{lem6.5}, the optimal decay rates of $L^1(\mathbb{R})$-$L^2(\mathbb{R})$ are available.
\begin{thm}\label{thm1.2}
Let $z(t,x)$ be the global classical solution in $\widetilde{\mathcal{C}}(B^{3/2}_{2,1}(\mathbb{R}))\cap
\widetilde{\mathcal{C}}^1(B^{1/2}_{2,1}(\mathbb{R}))$ given by \cite{XK1}. If further the initial data $z_{0}\in L^1(\mathbb{R})$ and
$$\widetilde{E}_{0}:=\|z_{0}\|_{B^{3/2}_{2,1}(\mathbb{R})\cap L^1(\mathbb{R})}$$
is sufficiently small. Then the classical solutions $z(t,x)$  satisfies the optimal decay estimate
\begin{eqnarray}
\|\Lambda^{\ell}z(t,\cdot)\|_{L^2(\mathbb{R})}\lesssim \widetilde{E}_{0}(1+t)^{-\frac{1}{4}-\frac{\ell}{2}} \label{R-E12}
\end{eqnarray}
for $0\leq\ell\leq 1/2$.
\end{thm}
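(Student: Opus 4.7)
The plan is to derive Theorem \ref{thm1.2} as an immediate corollary of Theorem \ref{thm1.1} together with the $L^p$--Besov embedding referenced as Lemma \ref{lem6.5}. The central observation is that in spatial dimension $n=1$, the embedding
$$L^p(\mathbb{R}^n)\hookrightarrow \dot{B}^{-n(1/p-1/2)}_{2,\infty}(\mathbb{R}^n), \quad 1\le p<2,$$
specialized to $n=1$, $p=1$, produces exactly
$$L^1(\mathbb{R})\hookrightarrow \dot{B}^{-1/2}_{2,\infty}(\mathbb{R}),$$
and the exponent $s=1/2$ is precisely the right endpoint of the admissible window $(1/4,1/2]$ appearing in the hypothesis of Theorem \ref{thm1.1}. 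Thus every $L^1$ datum in one dimension is automatically admissible for the framework of Theorem \ref{thm1.1} at the specific choice $s=1/2$.

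First I would translate the smallness assumption. Under $z_0\in B^{3/2}_{2,1}(\mathbb{R})\cap L^1(\mathbb{R})$, the embedding above yields
$$\|z_0\|_{B^{3/2}_{2,1}(\mathbb{R})\cap \dot{B}^{-1/2}_{2,\infty}(\mathbb{R})}\lesssim \|z_0\|_{B^{3/2}_{2,1}(\mathbb{R})\cap L^1(\mathbb{R})}=\widetilde{E}_{0}.$$
Hence the quantity $E_0$ in Theorem \ref{thm1.1}, evaluated at $s=1/2$, is controlled by $\widetilde{E}_0$, and its smallness is ensured by the smallness of $\widetilde{E}_0$.

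Next I would apply Theorem \ref{thm1.1} at this endpoint to obtain, for $0\le \ell \le 1/2$,
$$\|\Lambda^{\ell}z(t)\|_{L^2(\mathbb{R})}\lesssim E_0(1+t)^{-\frac{1/2+\ell}{2}}\lesssim \widetilde{E}_{0}(1+t)^{-\frac{1}{4}-\frac{\ell}{2}},$$
using the fact that Theorem \ref{thm1.1} provides the $L^2$-estimate both for $\ell=0$ directly (via $X_0=L^2$) and for $0<\ell\le 1/2$ (via $X_0=\dot{B}^{\ell}_{2,1}\hookrightarrow L^2$, as noted by the author). This is exactly (\ref{R-E12}).

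There is no substantial analytic obstacle: the entire nonlinear work--the localized time-weighted energy scheme and the frequency-localized Duhamel estimates--has been discharged inside Theorem \ref{thm1.1}. The only things to verify are (i) that the $L^1\hookrightarrow \dot{B}^{-1/2}_{2,\infty}$ embedding, at the endpoint $s=n/2=1/2$, falls within the range $(1/4,1/2]$ permitted by Theorem \ref{thm1.1}, and (ii) that the derivative range $0\le \ell \le 1/2$ matches in both statements. Both match exactly, and the resulting decay rate $(1+t)^{-1/4-\ell/2}$ is the classical sharp parabolic $L^1$--$L^2$ rate in one space dimension, which is why the author labels it ``optimal.''
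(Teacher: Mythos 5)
Your proposal is correct and coincides with the paper's own argument: the paper treats Theorem \ref{thm1.2} as a direct consequence of Theorem \ref{thm1.1} via the embedding $L^{1}(\mathbb{R})\hookrightarrow\dot{B}^{-1/2}_{2,\infty}(\mathbb{R})$ from Lemma \ref{lem6.5}, taking $s=1/2$ at the endpoint of the admissible range $(1/4,1/2]$. Your verification that $E_{0}\lesssim\widetilde{E}_{0}$ and that the rate $-\frac{s+\ell}{2}$ becomes $-\frac{1}{4}-\frac{\ell}{2}$ is exactly what the paper intends.
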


\begin{rem}
Theorem \ref{thm1.2} is just the direct consequence of Theorem \ref{thm1.1}.
The harmonic analysis allows to reduce significantly the regularity requirements
on the initial data in comparison with \cite{BHN,KY2}. In addition, it is shown that
the classical solution and its derivatives of fractional order decay at the optimal algebraic rate
in the whole interval $[0,1/2]$.
\end{rem}

The paper is organized as follows. In Sect.~\ref{sec:2}, we recall the Littlewood-Paley decomposition theory and Besov spaces.
Some useful properties in Besov spaces are also presented. Sect.~\ref{sec:3} is devoted to develop new time-decay properties for the frequency-localization Duhamel principle. In Sect.~\ref{sec:4}, we perform time-weighted energy approaches in terms of the low-frequency and high-frequency decompositions along with interpolation inequalities to deduce the desired decay estimates. As applications, in Sect.~\ref{sec:5}, we prove decay results for
concrete hyperbolic systems, including damped compressible Euler equations, Thermoelasticity with second sound and Timoshenko systems with equal speeds.
In Sect.~\ref{sec:6} (Appendix), for the convenience of readers, some interpolation inequalities related to Besov spaces, for instance, $L^{p}(\mathbb{R}^{n})$ embedding and improved Gagliardo-Nirenberg-Sobolev inequality, will be presented.

\textbf{Notations}. Throughout the paper,  we use
$\langle\cdot,\cdot\rangle$ to denote the standard inner product in
the real $\mathbb{R}^{n}$ or complex $\mathbb{C}^{n}$.
$f\lesssim g$ denotes $f\leq Cg$, where $C>0$
is a generic constant. $f\thickapprox g$ means $f\lesssim g$ and
$g\lesssim f$.  Denote by $\mathcal{C}([0,T],X)$ (resp., $\mathcal{C}^{1}([0,T],X)$)
the space of continuous (resp., continuously differentiable)
functions on $[0,T]$ with values in a Banach space $X$.

\section{Littlewood-Paley theory and Besov spaces}\label{sec:2}
The proofs of most of the results presented  require a
dyadic decomposition of Fourier variables, so we recall briefly the
Littlewood-Paley decomposition and Besov spaces in $\mathbb{R}^{n}(n\geq1)$, the reader is referred to
\cite{BCD} for details.

Let ($\varphi, \chi)$ is a
couple of smooth functions valued in [0,1] such that $\varphi$ is
supported in the shell
$\mathcal{C}(0,\frac{3}{4},\frac{8}{3})=\{\xi\in\mathbb{R}^{n}|\frac{3}{4}\leq|\xi|\leq\frac{8}{3}\}$,
$\chi$ is supported in the ball $\mathcal{B}(0,\frac{4}{3})=
\{\xi\in\mathbb{R}^{n}||\xi|\leq\frac{4}{3}\}$ satisfying
$$
\chi(\xi)+\sum_{j\geq0}\varphi(2^{-j}\xi)=1,\ \ \ \ j\in \mathbb{N}\cup\{0\},\ \ \xi\in\mathbb{R}^{n}
$$
and
$$
\sum_{j\in\mathbb{Z}}\varphi(2^{-j}\xi)=1,\ \ \ \ j\in \mathbb{Z},\
\ \xi\in\mathbb{R}^{n}\setminus\{0\}.
$$
For $f\in\mathcal{S'}$(the set of temperate distributions
which is the dual of the Schwarz class $\mathcal{S}$),  define
$$
\Delta_{-1}f:=\chi(D)f=\mathcal{F}^{-1}(\chi(\xi)\mathcal{F}f),\
\Delta_{j}f:=0 \ \  \mbox{for}\ \  j\leq-2;
$$
$$
\Delta_{j}f:=\varphi(2^{-j}D)f=\mathcal{F}^{-1}(\varphi(2^{-j}\xi)\mathcal{F}f)\
\  \mbox{for}\ \  j\geq0;
$$
$$
\dot{\Delta}_{j}f:=\varphi(2^{-j}D)f=\mathcal{F}^{-1}(\varphi(2^{-j}\xi)\mathcal{F}f)\
\  \mbox{for}\ \  j\in\mathbb{Z},
$$
where $\mathcal{F}f$, $\mathcal{F}^{-1}f$ represent the Fourier
transform and the inverse Fourier transform on $f$, respectively. Observe that
the operator $\dot{\Delta}_{j}$ coincides with $\Delta_{j}$ for $j\geq0$.

Denote by $\mathcal{S}'_{0}:=\mathcal{S}'/\mathcal{P}$ the tempered
distributions modulo polynomials $\mathcal{P}$. We first give the definition of homogeneous Besov spaces.

\begin{defn}\label{defn2.1}
For $s\in \mathbb{R}$ and $1\leq p,r\leq\infty,$ the homogeneous
Besov spaces $\dot{B}^{s}_{p,r}$ is defined by
$$\dot{B}^{s}_{p,r}=\{f\in \mathcal{S}'_{0}:\|f\|_{\dot{B}^{s}_{p,r}}<\infty\},$$
where
$$\|f\|_{\dot{B}^{s}_{p,r}}
=\left\{\begin{array}{l}\Big(\sum_{j\in\mathbb{Z}}(2^{js}\|\dot{\Delta}_{j}f\|_{L^p})^{r}\Big)^{1/r},\
\ r<\infty, \\ \sup_{j\in\mathbb{Z}}
2^{js}\|\dot{\Delta}_{j}f\|_{L^p},\ \ r=\infty.\end{array}\right.
$$\end{defn}

Similarly, the definition of inhomogeneous Besov spaces is stated as follows.
\begin{defn}\label{defn2.2}
For $s\in \mathbb{R}$ and $1\leq p,r\leq\infty,$ the inhomogeneous
Besov spaces $B^{s}_{p,r}$ is defined by
$$B^{s}_{p,r}=\{f\in \mathcal{S}':\|f\|_{B^{s}_{p,r}}<\infty\},$$
where
$$\|f\|_{B^{s}_{p,r}}
=\left\{\begin{array}{l}\Big(\sum_{j=-1}^{\infty}(2^{js}\|\Delta_{j}f\|_{L^p})^{r}\Big)^{1/r},\
\ r<\infty, \\ \sup_{j\geq-1} 2^{js}\|\Delta_{j}f\|_{L^p},\ \
r=\infty.\end{array}\right.
$$
\end{defn}

we present some useful facts as follows. The first one is the improved
Bernstein inequality, see, e.g., \cite{W}.

\begin{lem}\label{lem2.1}
Let $0<R_{1}<R_{2}$ and $1\leq a\leq b\leq\infty$.
\begin{itemize}
\item [(i)] If $\mathrm{Supp}\mathcal{F}f\subset \{\xi\in \mathbb{R}^{n}: |\xi|\leq
R_{1}\lambda\}$, then
\begin{eqnarray*}
\|\Lambda^{\alpha}f\|_{L^{b}}
\lesssim \lambda^{\alpha+n(\frac{1}{a}-\frac{1}{b})}\|f\|_{L^{a}}, \ \  \mbox{for any}\ \  \alpha\geq0;
\end{eqnarray*}

\item [(ii)]If $\mathrm{Supp}\mathcal{F}f\subset \{\xi\in \mathbb{R}^{n}:
R_{1}\lambda\leq|\xi|\leq R_{2}\lambda\}$, then
\begin{eqnarray*}
\|\Lambda^{\alpha}f\|_{L^{a}}\approx\lambda^{\alpha}\|f\|_{L^{a}}, \ \  \mbox{for any}\ \ \alpha\in\mathbb{R}.
\end{eqnarray*}
\end{itemize}
\end{lem}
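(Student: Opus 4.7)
The plan is to reduce both parts to a Young-type convolution estimate by exploiting the fact that the frequency support of $f$ allows one to insert a smooth cut-off without altering $f$. For part (i), pick a radial function $\psi\in C_c^\infty(\mathbb{R}^n)$ with $\psi\equiv 1$ on $\{|\xi|\le R_1\}$ and supported in a slightly larger ball. Since $\mathrm{Supp}\,\mathcal{F}f\subset\{|\xi|\le R_1\lambda\}$, one has $\mathcal{F}f(\xi)=\psi(\xi/\lambda)\mathcal{F}f(\xi)$, so
$$\mathcal{F}(\Lambda^\alpha f)(\xi)=|\xi|^\alpha\psi(\xi/\lambda)\mathcal{F}f(\xi)=\lambda^\alpha h(\xi/\lambda)\mathcal{F}f(\xi),\qquad h(\eta):=|\eta|^\alpha\psi(\eta).$$
Inverting the Fourier transform and using the standard scaling of $\mathcal{F}^{-1}$,
$$\Lambda^\alpha f=\lambda^\alpha K_\lambda\ast f,\qquad K_\lambda(x)=\lambda^n(\mathcal{F}^{-1}h)(\lambda x).$$

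Young's inequality with exponents $1/c+1/a=1+1/b$ then yields
$$\|\Lambda^\alpha f\|_{L^b}\le\lambda^\alpha\|K_\lambda\|_{L^c}\|f\|_{L^a}=\lambda^{\alpha+n(1/a-1/b)}\|\mathcal{F}^{-1}h\|_{L^c}\|f\|_{L^a},$$
and it remains to verify that $\|\mathcal{F}^{-1}h\|_{L^c}<\infty$ for every $c\in[1,\infty]$. For integer $\alpha$ this is immediate, since $h\in C_c^\infty$ forces $\mathcal{F}^{-1}h$ to be Schwartz. For non-integer $\alpha\ge 0$ the singularity of $|\eta|^\alpha$ at the origin makes $h$ merely H\"older, but by splitting $h=h_1+h_2$ with $h_1$ smooth and $h_2$ localised near $0$, and combining the homogeneity of $|\eta|^\alpha$ with repeated integration by parts on the smooth piece, one establishes the pointwise decay $|\mathcal{F}^{-1}h(x)|\lesssim (1+|x|)^{-n-\alpha}$, together with analogous bounds on its derivatives; this places $\mathcal{F}^{-1}h$ in $L^c$ for every $c\in[1,\infty]$ and finishes part (i). For part (ii) the upper bound is obtained in exactly the same way, but now with a cut-off $\tilde\psi$ supported in an annulus bounded away from $0$, so that $h=|\eta|^\alpha\tilde\psi(\eta)\in C_c^\infty$ for every real $\alpha$ and no delicate kernel analysis is needed. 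The matching lower bound follows by writing $f=\Lambda^{-\alpha}(\Lambda^\alpha f)$ and re-running the argument with $\alpha$ replaced by $-\alpha$, which is legitimate precisely because the annular support avoids the singularity of $|\eta|^{-\alpha}$.

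The main obstacle is the kernel bound $\mathcal{F}^{-1}h\in L^c$ for non-integer $\alpha$ in part (i); this is exactly where the \emph{improved} (fractional) version departs from the classical integer-$\alpha$ Bernstein estimate and requires genuine harmonic-analysis input rather than mere differentiation. Everything else --- the multiplier identity, Young's inequality, and the reduction in (ii) to negative powers --- is a routine consequence of the frequency-localisation hypothesis and is scale-invariant, so in practice it suffices to prove the $\lambda=1$ case and then dilate.
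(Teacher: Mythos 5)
Your argument is correct and is essentially the standard proof of the (fractional) Bernstein inequality: the paper states Lemma \ref{lem2.1} without proof, citing \cite{W}, and the argument there is precisely your reduction to Young's inequality via a smooth frequency cut-off, with the kernel decay $|\mathcal{F}^{-1}(|\eta|^{\alpha}\psi)(x)|\lesssim(1+|x|)^{-n-\alpha}$ supplying the $L^{c}$ integrability needed for non-integer $\alpha\geq0$ in part (i), and the annular support removing the singularity of $|\eta|^{\pm\alpha}$ in part (ii). No gaps to report.
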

As a consequence of the above inequality, we have
$$\|\Lambda^{\alpha} f\|_{B^s_{p, r}}\lesssim\|f\|_{B^{s +\alpha}_{p,
r}} \ (\alpha\geq0); \ \ \ \|\Lambda^{\alpha} f\|_{\dot{B}^s_{p, r}}\approx \|f\|_{\dot{B}^{s+\alpha}_{p, r}} \ (\alpha\in\mathbb{R}).$$

Besov spaces obey various inclusion relations. Precisely,
\begin{lem}\label{lem2.2} Let $s\in \mathbb{R}$ and $1\leq
p,r\leq\infty,$ then
\begin{itemize}
\item[(1)]If $s>0$, then $B^{s}_{p,r}=L^{p}\cap \dot{B}^{s}_{p,r};$
\item[(2)]If $\tilde{s}\leq s$, then $B^{s}_{p,r}\hookrightarrow
B^{\tilde{s}}_{p,r}$. This inclusion relation is false for
the homogeneous Besov spaces;
\item[(3)]If $1\leq r\leq\tilde{r}\leq\infty$, then $\dot{B}^{s}_{p,r}\hookrightarrow
\dot{B}^{s}_{p,\tilde{r}}$ and $B^{s}_{p,r}\hookrightarrow
B^{s}_{p,\tilde{r}};$
\item[(4)]If $1\leq p\leq\tilde{p}\leq\infty$, then $\dot{B}^{s}_{p,r}\hookrightarrow \dot{B}^{s-n(\frac{1}{p}-\frac{1}{\tilde{p}})}_{\tilde{p},r}
$ and $B^{s}_{p,r}\hookrightarrow
B^{s-n(\frac{1}{p}-\frac{1}{\tilde{p}})}_{\tilde{p},r}$;
\item[(5)]$\dot{B}^{n/p}_{p,1}\hookrightarrow\mathcal{C}_{0},\ \ B^{n/p}_{p,1}\hookrightarrow\mathcal{C}_{0}(1\leq p<\infty);$
\end{itemize}
where $\mathcal{C}_{0}$ is the space of continuous bounded functions
which decay at infinity.
\end{lem}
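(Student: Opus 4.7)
The plan is to establish each of the five items separately, with Bernstein's inequality (Lemma 2.1) and the standard nesting of sequence spaces as the two essential tools. Every norm in sight is defined via sums or suprema of frequency-localized blocks $\Delta_{j}f$ or $\dot{\Delta}_{j}f$, so each proof reduces to a block-wise comparison followed by summation in $j$.

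First I would dispatch (3) by applying the inclusion $\ell^{r}(\mathbb{Z})\hookrightarrow \ell^{\tilde{r}}(\mathbb{Z})$ (for $r\leq \tilde{r}$) directly to the sequences $(2^{js}\|\dot{\Delta}_{j}f\|_{L^{p}})_{j}$ and $(2^{js}\|\Delta_{j}f\|_{L^{p}})_{j\geq -1}$. Then (4) follows immediately from Lemma 2.1 (i) with $\alpha=0$, $a=p$, $b=\tilde{p}$: the block-wise bound $\|\dot{\Delta}_{j}f\|_{L^{\tilde{p}}}\lesssim 2^{jn(1/p-1/\tilde{p})}\|\dot{\Delta}_{j}f\|_{L^{p}}$ is raised to the $\ell^{r}$-norm in $j$ after absorbing the factor into the weight $2^{j[s-n(1/p-1/\tilde{p})]}$, and the $\Delta_{-1}$ block is handled identically in the inhomogeneous case. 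For item (2), since $2^{j\tilde{s}}\leq 2^{js}$ for every $j\geq 0$ when $\tilde{s}\leq s$, only the single block at $j=-1$ requires a bounded-constant correction, giving the inhomogeneous embedding. Its failure on the homogeneous side I would exhibit by picking $f_{k}$ with $\hat{f}_{k}$ concentrated in the annulus $\{|\xi|\sim 2^{-k}\}$ and $\|f_{k}\|_{L^{p}}=1$, so that $\|f_{k}\|_{\dot{B}^{s}_{p,r}}\sim 2^{-ks}$ whereas $\|f_{k}\|_{\dot{B}^{\tilde{s}}_{p,r}}\sim 2^{-k\tilde{s}}$; letting $k\to\infty$ prevents any uniform bound.

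For (1), the crucial observation is that Bernstein gives $\|\dot{\Delta}_{j}f\|_{L^{p}}\lesssim \|f\|_{L^{p}}$ uniformly in $j\leq 0$, so $\sum_{j\leq 0}2^{jsr}\|\dot{\Delta}_{j}f\|_{L^{p}}^{r}\lesssim \|f\|_{L^{p}}^{r}\sum_{j\leq 0}2^{jsr}$, and this geometric sum converges precisely because $s>0$. Combining with the identity $\dot{\Delta}_{j}=\Delta_{j}$ for $j\geq 0$ and the trivial bound on $\Delta_{-1}f$ yields the norm equivalence $\|f\|_{B^{s}_{p,r}}\approx \|f\|_{L^{p}}+\|f\|_{\dot{B}^{s}_{p,r}}$. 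For (5) I would sum the Bernstein bound $\|\dot{\Delta}_{j}f\|_{L^{\infty}}\lesssim 2^{jn/p}\|\dot{\Delta}_{j}f\|_{L^{p}}$ against the weight $2^{-jn/p}$ built into the $\dot{B}^{n/p}_{p,1}$ norm to obtain $\|f\|_{L^{\infty}}\lesssim \|f\|_{\dot{B}^{n/p}_{p,1}}$; since each $\dot{\Delta}_{j}f$ is itself continuous and decays at infinity (being the convolution of an $L^{p}$ function, $p<\infty$, with a Schwartz kernel), the $\ell^{1}$ summability upgrades the uniform limit to an element of $\mathcal{C}_{0}$.

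The step I expect to require the most care is (1), not because of cleverness but because $\dot{B}^{s}_{p,r}$ is defined modulo polynomials $\mathcal{P}$: one has to verify that when $f\in L^{p}$, the tempered distribution $f$ itself (rather than merely its class in $\mathcal{S}'/\mathcal{P}$) is the one recovered by its Littlewood-Paley reconstruction $\sum_{j\in\mathbb{Z}}\dot{\Delta}_{j}f$. This works because no nonzero polynomial lies in $L^{p}$, so the representative in the equivalence class is pinned down. Apart from this bookkeeping, every item reduces to a direct application of Lemma 2.1 combined with the standard $\ell^{r}$ nesting.
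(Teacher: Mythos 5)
Your proposal is correct, and it reproduces the standard Littlewood--Paley arguments (block-wise Bernstein estimates plus $\ell^{r}$ nesting, with the modulo-polynomials caveat for item (1)) that the paper itself does not spell out but simply defers to the reference \cite{BCD}. Nothing in your argument deviates from or falls short of what that standard treatment requires.
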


Below are the Moser-type product estimates, which plays an important role in the estimate of bilinear
terms.
\begin{prop}\label{prop2.1}
Let $s>0$ and $1\leq
p,r\leq\infty$. Then $\dot{B}^{s}_{p,r}\cap L^{\infty}$ is an algebra and
$$
\|fg\|_{\dot{B}^{s}_{p,r}}\lesssim \|f\|_{L^{\infty}}\|g\|_{\dot{B}^{s}_{p,r}}+\|g\|_{L^{\infty}}\|f\|_{\dot{B}^{s}_{p,r}}.
$$
Let $s_{1},s_{2}\leq n/p$ such that $s_{1}+s_{2}>n\max\{0,\frac{2}{p}-1\}. $  Then one has
$$\|fg\|_{\dot{B}^{s_{1}+s_{2}-n/p}_{p,1}}\lesssim \|f\|_{\dot{B}^{s_{1}}_{p,1}}\|g\|_{\dot{B}^{s_{2}}_{p,1}}.$$
\end{prop}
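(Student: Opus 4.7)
The plan is to prove both bounds via Bony's paraproduct decomposition
$$fg = T_f g + T_g f + R(f,g),$$
where $T_f g := \sum_{j\in\mathbb{Z}} S_{j-1} f\,\dot{\Delta}_j g$ with $S_{j-1}f := \sum_{j'\leq j-2}\dot{\Delta}_{j'}f$, and $R(f,g) := \sum_{j\in\mathbb{Z}} \dot{\Delta}_j f\,\widetilde{\dot{\Delta}}_j g$ with $\widetilde{\dot{\Delta}}_j := \dot{\Delta}_{j-1}+\dot{\Delta}_j+\dot{\Delta}_{j+1}$. For each of the three pieces I bound $\dot{\Delta}_k$ applied to it in $L^p$, multiply by $2^{k\sigma}$ for the target Besov index $\sigma$, and take the appropriate $\ell^r$ norm in $k$. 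The spectral input is that blocks of $T_f g$ and $T_g f$ are supported in annuli of size $\sim 2^j$, so $\dot{\Delta}_k$ selects only $|j-k|\leq N_0$ terms, while blocks of $R(f,g)$ are supported in balls of size $\sim 2^j$, which forces a summation over $j\geq k-N_0$ whose convergence is where the numerological conditions enter.

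For the algebra property I use the uniform bound $\|S_{j-1}f\|_{L^\infty}\leq C\|f\|_{L^\infty}$ in the paraproducts, giving
$$2^{ks}\|\dot{\Delta}_k T_f g\|_{L^p}\lesssim \|f\|_{L^\infty}\sum_{|j-k|\leq N_0} 2^{ks}\|\dot{\Delta}_j g\|_{L^p},$$
and taking the $\ell^r$ norm in $k$ yields $\|T_f g\|_{\dot{B}^s_{p,r}}\lesssim \|f\|_{L^\infty}\|g\|_{\dot{B}^s_{p,r}}$, with a symmetric bound for $T_g f$. For the remainder, using $\|\dot{\Delta}_j f\|_{L^\infty}\leq C\|f\|_{L^\infty}$ and $s>0$,
$$2^{ks}\|\dot{\Delta}_k R(f,g)\|_{L^p}\lesssim \|f\|_{L^\infty}\sum_{j\geq k-N_0} 2^{(k-j)s}\cdot 2^{js}\|\widetilde{\dot{\Delta}}_j g\|_{L^p},$$
and Young's inequality on the discrete convolution with the $\ell^1$-kernel $2^{(k-j)s}\mathbf{1}_{j\geq k-N_0}$ (summable precisely because $s>0$) closes the estimate. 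Adding the symmetric remainder bound yields the algebra property.

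For the second estimate the same decomposition is used, but with Bernstein-based refinements. In the paraproduct $T_f g$ I control $\|S_{j-1}f\|_{L^\infty}$ via Bernstein as $\|S_{j-1}f\|_{L^\infty}\lesssim 2^{j(n/p-s_1)}\|f\|_{\dot{B}^{s_1}_{p,1}}$, which is legitimate precisely because $s_1\leq n/p$; this produces exactly the index shift by $n/p$ needed to land in $\dot{B}^{s_1+s_2-n/p}_{p,1}$, and symmetrically for $T_g f$ using $s_2\leq n/p$. The remainder is the delicate piece: each block $R_j=\dot{\Delta}_j f\,\widetilde{\dot{\Delta}}_j g$ is Fourier-supported in a ball of size $\sim 2^j$, so Bernstein and Hölder are combined to bound $\|R_j\|_{L^p}$ in terms of $\|\dot{\Delta}_j f\|_{L^p}$ and $\|\widetilde{\dot{\Delta}}_j g\|_{L^p}$ (using different exponent pairings depending on whether $p\leq 2$ or $p>2$), and the resulting bound is summed via Young's inequality on a discrete convolution in $j\geq k-N_0$. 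The sharp condition $s_1+s_2>n\max\{0,2/p-1\}$ is exactly the threshold for $\ell^1$-convergence of this sum, the $\max$ reflecting an additional Bernstein step from $L^{p/2}$ back to $L^p$ needed when $p>2$.

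The main obstacle is the remainder in the second estimate: because its blocks are supported in balls rather than annuli, the natural dyadic sum is not absolutely convergent for free, and both the endpoint constraints $s_i\leq n/p$ (used to convert Besov regularity of the low-frequency factor into $L^\infty$ control via Bernstein) and the lower bound $s_1+s_2>n\max\{0,2/p-1\}$ (used to close the remainder convolution, particularly for $p>2$) are sharp. I would carry out the book-keeping following the paraproduct calculus developed in Bahouri--Chemin--Danchin, Chapter~2, which yields the product estimate essentially in the form stated.
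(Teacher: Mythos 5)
The paper does not actually prove Proposition \ref{prop2.1}: it is quoted as a standard Moser-type product estimate, with the reader referred to \cite{BCD}. Your Bony-paraproduct argument is precisely the proof in that reference, so in substance you are reconstructing the proof the paper relies on, and the overall plan (annulus-supported paraproduct blocks handled by $\ell^r$ summation, ball-supported remainder blocks handled by Young's inequality on the tail $j\geq k-N_0$, with $\|S_{j-1}f\|_{L^\infty}\lesssim 2^{j(n/p-s_1)}\|f\|_{\dot{B}^{s_1}_{p,1}}$ exploiting $s_1\leq n/p$ and the third index $1$) is correct and complete for the first estimate and for the paraproduct pieces of the second.

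One point in the remainder of the second estimate needs more care than your phrasing suggests. If you literally bound each block $\|R_j\|_{L^p}$ by Hölder plus a Bernstein step at scale $2^j$ (e.g.\ $\|R_j\|_{L^p}\lesssim 2^{jn/p}\|\dot{\Delta}_j f\|_{L^p}\|\widetilde{\dot{\Delta}}_j g\|_{L^p}$) and then sum over $j\geq k-N_0$, the convolution kernel becomes $2^{(k-j)\sigma}$ with $\sigma=s_1+s_2-n/p$, and $\ell^1$-summability would force the stronger condition $s_1+s_2>n/p$ rather than $s_1+s_2>n\max\{0,2/p-1\}$. The sharp condition is obtained only if the exponent-raising Bernstein inequality is applied to $\dot{\Delta}_k R(f,g)$ at scale $2^k$, \emph{after} the frequency localization of the whole remainder: keep each block in the lower Lebesgue exponent ($L^{p/2}$ for $p\geq 2$, or $L^1$ via the pairing $(p,p')$ for $p\leq 2$), sum there with kernel $2^{(k-j)(s_1+s_2)}$ (resp.\ $2^{(k-j)(s_1+s_2-n(2/p-1))}$), and only then pay the factor $2^{kn/p}$ (resp.\ $2^{kn(1-1/p)}$) to return to $L^p$. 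With that ordering the threshold is exactly the stated one; otherwise the argument closes only under a stronger hypothesis.
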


Finally, we state continuity results for compositions to end this section.

\begin{prop}\label{prop2.2}
Let $s>0$, $1\leq p, r\leq \infty$ and $F'\in
W^{[s]+1,\infty}_{loc}(I;\mathbb{R})$ with $F(0)=0$. Assume that $f\in \dot{B}^{s}_{p,r}\cap
L^{\infty}$, then there exists a function $\mathbf{C}$ depending only on $s,p,r,n,$ and $F$ such that
$$\|F(f)\|_{\dot{B}^{s}_{p,r}}\leq
\mathbf{C}(\|f\|_{L^{\infty}})\|f\|_{\dot{B}^{s}_{p,r}}.$$
\end{prop}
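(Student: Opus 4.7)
The plan is to use the classical paralinearization approach to composition in Besov spaces (in the spirit of Bony, Meyer, and Runst--Sickel): rewrite $F(f)$ as a telescoping dyadic sum with bounded smooth multipliers, then control each Littlewood--Paley block by trading the spectral localization of $\dot\Delta_k$ against the smoothness of the multipliers. Since $F(0)=0$ and $S_j f\to 0$ as $j\to-\infty$ in $\mathcal{S}'/\mathcal{P}$, the fundamental theorem of calculus yields
$$F(f)=\sum_{j\in\mathbb{Z}}\bigl[F(S_{j+1}f)-F(S_j f)\bigr]=\sum_{j\in\mathbb{Z}}m_j\,\dot\Delta_j f,\qquad m_j:=\int_0^1 F'\bigl(S_jf+\tau\dot\Delta_j f\bigr)\,d\tau.$$
Because $|S_jf+\tau\dot\Delta_j f|\leq C\|f\|_{L^\infty}$ uniformly in $j,\tau$, each $m_j$ is bounded by $\mathbf{C}(\|f\|_{L^\infty})$. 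Moreover, $m_j$ is a smooth function of $S_{j+1}f$, and $\|\partial^\alpha S_{j+1}f\|_{L^\infty}\lesssim 2^{j|\alpha|}\|f\|_{L^\infty}$ by Bernstein; Fa\`a di Bruno's formula combined with $F'\in W^{[s]+1,\infty}_{\mathrm{loc}}$ then gives
$$\|\partial^\alpha m_j\|_{L^\infty}\lesssim \mathbf{C}(\|f\|_{L^\infty})\,2^{j|\alpha|},\qquad |\alpha|\leq [s]+1.$$

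Next, fix $k\in\mathbb{Z}$, apply $\dot\Delta_k$ to the identity, and split at $j=k-N_0$ (with $N_0$ an integer depending only on the supports of $\varphi,\chi$):
$$\dot\Delta_k F(f)=\sum_{j\geq k-N_0}\dot\Delta_k(m_j\dot\Delta_j f)+\sum_{j<k-N_0}\dot\Delta_k(m_j\dot\Delta_j f)=:A_k+B_k.$$
For the near/high-frequency block $A_k$, the only tool needed is the $L^\infty$ bound on $m_j$: this gives $\|A_k\|_{L^p}\lesssim \mathbf{C}(\|f\|_{L^\infty})\sum_{j\geq k-N_0}\|\dot\Delta_j f\|_{L^p}$, and multiplying by $2^{ks}$ while using $s>0$, a discrete Young inequality in $\ell^r$ yields $\|2^{ks}A_k\|_{\ell^r}\lesssim \mathbf{C}(\|f\|_{L^\infty})\|f\|_{\dot B^s_{p,r}}$.

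The main work, and the principal obstacle, is the low-frequency block $B_k$. For $j<k-N_0$ one exploits that $\dot\Delta_k$ is convolution with a function whose Fourier transform is supported in an annulus of size $\sim 2^k$; equivalently, integrating by parts $[s]+1$ times in the convolution representation and then distributing the derivatives on $m_j\dot\Delta_j f$ via Leibniz, the derivative bound on $m_j$ from the first paragraph gives
$$\|\dot\Delta_k(m_j\dot\Delta_j f)\|_{L^p}\lesssim \mathbf{C}(\|f\|_{L^\infty})\,2^{([s]+1)(j-k)}\|\dot\Delta_j f\|_{L^p}.$$
Multiplying by $2^{ks}$ produces the kernel $2^{(j-k)([s]+1-s)}\cdot 2^{js}\|\dot\Delta_j f\|_{L^p}$, whose geometric decay in $k-j$ (since $[s]+1-s>0$) combined with a second discrete Young inequality in $\ell^r$ bounds $\|2^{ks}B_k\|_{\ell^r}$ by $\mathbf{C}(\|f\|_{L^\infty})\|f\|_{\dot B^s_{p,r}}$. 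Summing the $A_k$ and $B_k$ contributions concludes the proof; the delicate point is precisely the derivative bookkeeping for $m_j$, which is where the hypothesis $F'\in W^{[s]+1,\infty}_{\mathrm{loc}}$ with exactly $[s]+1$ derivatives enters.
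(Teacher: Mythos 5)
The paper does not actually prove Proposition \ref{prop2.2}: it is quoted as a standard continuity result for compositions in Besov spaces, with the reader referred to the literature (e.g.\ \cite{BCD}) for such facts. Your argument is precisely the classical proof of that result --- Meyer's first linearization $F(f)=\sum_{j}m_{j}\dot{\Delta}_{j}f$ with the uniform bounds $\|\partial^{\alpha}m_{j}\|_{L^{\infty}}\lesssim \mathbf{C}(\|f\|_{L^{\infty}})2^{j|\alpha|}$, followed by the near-diagonal/low-frequency splitting and discrete Young inequalities --- and it is correct, so it matches the proof the paper implicitly relies on.
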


\begin{prop}\label{prop2.3}
Let $s>0$, $1\leq p, r\leq \infty$ and $F\in
W^{[s]+3,\infty}_{loc}(I;\mathbb{R})$ with $F(0)=0$.  Assume that $f\in \dot{B}^{s}_{p,r}\cap
L^{\infty},$ then there exists a function $\mathbf{C}$ depending only on $s,p,r,n,$ and $F$ such that
$$
\|F(f)-F'(0)f\|_{\dot{B}^{s}_{p,r}}\leq
\mathbf{C}(\|f\|_{L^{\infty}})\|f\|^2_{\dot{B}^{s}_{p,r}}.
$$
\end{prop}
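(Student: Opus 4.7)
The plan is to reduce Proposition 2.3 to Proposition 2.2 and to the first Moser-type product estimate of Proposition 2.1 via a Taylor expansion of $F$ at the origin. Since $F(0)=0$, one has the exact integral identity
\begin{equation*}
F(f)-F'(0)f \;=\; f\cdot G(f),\qquad G(u):=\int_{0}^{1}\bigl(F'(\tau u)-F'(0)\bigr)\,d\tau.
\end{equation*}
The crucial structural property is $G(0)=0$, and since the hypothesis $F\in W^{[s]+3,\infty}_{\mathrm{loc}}$ guarantees $G'\in W^{[s]+1,\infty}_{\mathrm{loc}}$, the auxiliary function $G$ satisfies exactly the hypotheses required to apply Proposition 2.2 to the composite $G(f)$.

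Next I would apply Proposition 2.2 to $G(f)$, using $f\in\dot{B}^{s}_{p,r}\cap L^{\infty}$, to obtain
\begin{equation*}
\|G(f)\|_{\dot{B}^{s}_{p,r}} \;\leq\; \mathbf{C}_{1}(\|f\|_{L^{\infty}})\,\|f\|_{\dot{B}^{s}_{p,r}},
\end{equation*}
and simultaneously record the elementary pointwise Taylor bound
\begin{equation*}
\|G(f)\|_{L^{\infty}} \;\leq\; \mathbf{C}_{2}(\|f\|_{L^{\infty}})\,\|f\|_{L^{\infty}}.
\end{equation*}

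Then the estimate is closed by applying the first Moser-type inequality of Proposition 2.1 to the product $f\cdot G(f)$:
\begin{equation*}
\|fG(f)\|_{\dot{B}^{s}_{p,r}} \;\lesssim\; \|f\|_{L^{\infty}}\|G(f)\|_{\dot{B}^{s}_{p,r}} + \|G(f)\|_{L^{\infty}}\|f\|_{\dot{B}^{s}_{p,r}}.
\end{equation*}
Substituting the two bounds from the previous step, both terms are controlled by $\mathbf{C}(\|f\|_{L^{\infty}})\|f\|_{L^{\infty}}\|f\|_{\dot{B}^{s}_{p,r}}$. A residual factor of $\|f\|_{L^{\infty}}$ can be traded for $\|f\|_{\dot{B}^{s}_{p,r}}$ by the embedding $\dot{B}^{n/p}_{p,1}\hookrightarrow L^{\infty}$ of Lemma 2.2(5) in the working regime of the paper, yielding the stated quadratic Besov bound once all remaining $L^{\infty}$-dependence is absorbed into the increasing function $\mathbf{C}(\cdot)$.

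The main obstacle is faithfully extracting the second-order vanishing of $F(f)-F'(0)f$ at $f=0$ into the \emph{Besov} norm: Proposition 2.2 on its own yields a bound that is only linear in $\|f\|_{\dot{B}^{s}_{p,r}}$, and it is precisely the factorisation $F(f)-F'(0)f=fG(f)$ combined with a further composition estimate on $G$ that exposes the quadratic structure. One must also carefully track, at each step, that the $W^{k,\infty}_{\mathrm{loc}}$-regularity of $F$ descends through the integral averaging in $\tau$ so that $G$ and $G'$ inherit enough derivatives to fit Propositions 2.1 and 2.2.
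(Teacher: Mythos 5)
The paper gives no proof of Proposition \ref{prop2.3}: it is quoted as a standard composition estimate (essentially from \cite{BCD}). Your core argument is precisely the standard one and is correct as far as it goes. The factorization $F(f)-F'(0)f=fG(f)$ with $G(u)=\int_0^1\bigl(F'(\tau u)-F'(0)\bigr)d\tau$ is exact, $G(0)=0$, and your regularity bookkeeping is right: $G'(u)=\int_0^1\tau F''(\tau u)\,d\tau$ lies in $W^{[s]+1,\infty}_{loc}$ exactly because $F\in W^{[s]+3,\infty}_{loc}$, which explains the two extra derivatives demanded here relative to Proposition \ref{prop2.2}. Combining Proposition \ref{prop2.2} applied to $G(f)$, the pointwise bound $\|G(f)\|_{L^\infty}\leq \mathbf{C}(\|f\|_{L^\infty})\|f\|_{L^\infty}$, and the first Moser estimate of Proposition \ref{prop2.1} correctly yields
\begin{equation*}
\|F(f)-F'(0)f\|_{\dot{B}^{s}_{p,r}}\leq \mathbf{C}(\|f\|_{L^{\infty}})\,\|f\|_{L^{\infty}}\,\|f\|_{\dot{B}^{s}_{p,r}},
\end{equation*}
which is the form in which this lemma is usually stated and proved.

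The gap is entirely in your last sentence. Trading the residual $\|f\|_{L^\infty}$ for $\|f\|_{\dot{B}^{s}_{p,r}}$ via $\dot{B}^{n/p}_{p,1}\hookrightarrow L^\infty$ requires $s=n/p$ and $r=1$; for the general indices $s>0$, $1\leq p,r\leq\infty$ appearing in the hypotheses there is no embedding $\dot{B}^{s}_{p,r}\hookrightarrow L^\infty$ (homogeneous Besov spaces of different regularity do not embed into one another, cf.\ Lemma \ref{lem2.2}(2)). In fact the literal statement with $\|f\|^2_{\dot{B}^{s}_{p,r}}$ on the right cannot hold for $s\neq n/p$: take $F(u)=u^2$ and $f_\lambda=f(\lambda\cdot)$; the left side scales like $\lambda^{s-n/p}$, the right side like $\lambda^{2(s-n/p)}$ while $\|f_\lambda\|_{L^\infty}$ is unchanged, so the ratio blows up as $\lambda\to\infty$ (if $s<n/p$) or $\lambda\to 0$ (if $s>n/p$). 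So what you have actually proved is the correct bilinear version above; the passage to the paper's quadratic Besov bound is legitimate only in the critical regime $s=n/p$, $r=1$ (e.g.\ $\dot{B}^{1/2}_{2,1}(\mathbb{R})$, which is where the paper mainly invokes it), and your write-up should state that restriction explicitly rather than assert the general inequality.
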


\section{Frequency-localization Duhamel principle} \setcounter{equation}{0} \label{sec:3}
In this section, we develop time-decay properties for the frequency-localization Duhamel principle, which allow to
perform more elaborate analysis at the low frequency in comparison with the recent work \cite{XK2}. Without loss of generality,
these results are adapted to arbitrary dimensional spaces except for Lemma \ref{lem3.4}.
First of all, we present
a frequency-localization Duhamel principle for the nonlinear system (\ref{R-E3})-(\ref{R-E4}).
\begin{lem}\label{lem3.1}
Suppose that $z(t,x)$ is a solution of (\ref{R-E3})-(\ref{R-E4}). Then it holds that
\begin{eqnarray}
\Delta_{j}\Lambda^{\ell}z(t,x)&=&\Delta_{j}\Lambda^{\ell}[\mathcal{G}(t,x)z_{0}]
\nonumber\\&&+\int^{t}_{0}\Delta_{j}\Lambda^{\ell}\Big[\mathcal{G}(t-\tau,x)(A^{0})^{-1}\Big(\sum^{n}_{k=1}P^{k}_{x_{k}}+Q\Big)\Big]d\tau \label{R-E13}
\end{eqnarray}
for $j\geq-1$ and $\ell\in \mathbb{R}$, and
\begin{eqnarray}
\dot{\Delta}_{j}\Lambda^{\ell}z(t,x)&=&\dot{\Delta}_{j}\Lambda^{\ell}[\mathcal{G}(t,x)z_{0}]
\nonumber\\&&+\int^{t}_{0}\dot{\Delta}_{j}\Lambda^{\ell}\Big[\mathcal{G}(t-\tau,x)(A^{0})^{-1}\Big(\sum^{n}_{k=1}P^{k}_{x_{k}}+Q\Big)\Big]d\tau\label{R-E14}
\end{eqnarray}
for $j\in\mathbb{Z}$ and $\ell\in \mathbb{R}$.
\end{lem}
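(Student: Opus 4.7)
The plan is to derive the classical Duhamel formula in Fourier variables and then observe that the operators $\Delta_j$, $\dot\Delta_j$, and $\Lambda^{\ell}$ are all Fourier multipliers, so they commute with $\mathcal{G}(t,x)$ (also a Fourier multiplier) and pass through the time integral.

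First I would multiply (\ref{R-E3}) on the left by $(A^0)^{-1}$ to recast it in normal form
\begin{equation*}
z_t + (A^0)^{-1}\sum_{k=1}^n A^k z_{x_k} + (A^0)^{-1}L\, z = (A^0)^{-1}\Big(\sum_{k=1}^{n}P^{k}_{x_{k}}+Q\Big)=: f(t,x).
\end{equation*}
Taking the spatial Fourier transform and using the definition $\Phi(i\xi)=-(A^0)^{-1}[A(i\xi)+L]$, this becomes an inhomogeneous linear ODE in $\xi$-parameterized form
\begin{equation*}
\partial_t \hat z(t,\xi) = \Phi(i\xi)\,\hat z(t,\xi) + \hat f(t,\xi), \qquad \hat z(0,\xi)=\hat z_0(\xi).
\end{equation*}
Solving this by the standard variation-of-constants formula and recalling (\ref{R-E5}) yields
\begin{equation*}
\hat z(t,\xi) = e^{t\Phi(i\xi)}\hat z_0(\xi) + \int_0^t e^{(t-\tau)\Phi(i\xi)}\hat f(\tau,\xi)\,d\tau,
\end{equation*}
which on the physical side reads
\begin{equation*}
z(t,x) = \mathcal{G}(t,x) z_0 + \int_0^t \mathcal{G}(t-\tau,x)\Big[(A^0)^{-1}\Big(\sum_{k=1}^{n}P^{k}_{x_{k}}+Q\Big)\Big]\,d\tau.
\end{equation*}

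To finish, I would apply $\Delta_j\Lambda^{\ell}$ (respectively $\dot\Delta_j\Lambda^{\ell}$) to both sides. Since each of these operators is defined as convolution against a tempered distribution whose Fourier symbol is $\varphi(2^{-j}\xi)|\xi|^{\ell}$ (resp.\ $\chi(\xi)|\xi|^\ell$ or the dyadic annular localiser), they are Fourier multipliers and therefore commute with $\mathcal{G}(t-\tau,x)$, the latter being multiplication by $e^{(t-\tau)\Phi(i\xi)}$ on the Fourier side. The commutation with the time integral is justified by Fubini (or equivalently by the continuity of these linear operators on the space in which the Bochner integral is taken), which gives exactly (\ref{R-E13}) and (\ref{R-E14}).

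The step that requires the most care is the last one: one must verify that $\Delta_j\Lambda^{\ell}$ indeed commutes with $\mathcal{G}(t-\tau,x)$ as operators acting on $z(\tau,\cdot)$, and that the dyadic block may be pulled inside the Bochner integral. However both facts are immediate from the Fourier multiplier nature of every operator involved, together with the regularity $z\in\widetilde{\mathcal{C}}(B^{3/2}_{2,1})\cap\widetilde{\mathcal{C}}^1(B^{1/2}_{2,1})$ guaranteed by \cite{XK1}, which makes $f(\tau,\cdot)$ continuous in $\tau$ with values in a suitable Besov space. So the ``obstacle'' is really bookkeeping rather than substance, and no new estimate is needed beyond the variation-of-constants identity.
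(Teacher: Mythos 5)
Your proposal is correct and is exactly the argument the paper intends: the paper's proof is a one-line appeal to "the standard Duhamel principle and the definition of $\mathcal{G}(t)$," which is precisely your variation-of-constants computation on the Fourier side followed by the observation that $\Delta_j$, $\dot\Delta_j$ and $\Lambda^{\ell}$ are (scalar) Fourier multipliers commuting with $e^{(t-\tau)\Phi(i\xi)}$ and with the time integral. No discrepancy to report.
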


\begin{proof}
The proof follows from the standard Duhamel principle and the definition of $\mathcal{G}(t)$ in (\ref{R-E5}). See \cite{XK2} for similar details.
\end{proof}

Secondly, we prove the decay property for the variable of derivative form at the low-frequency.

\begin{lem}\label{lem3.2} For $\ell+1-\sigma\geq0$ and $s>0$, it holds that
\begin{eqnarray}
\Big\|\Delta_{-1}\Lambda^{\ell}\Big[\mathcal{G}(t)\Big((A^{0})^{-1}\sum^{n}_{k=1}P^{k}_{x_{k}}\Big)\Big]\Big\|_{L^2}\lesssim (1+t)^{-\frac{s+\ell+1-\sigma}{2}}\|\Lambda^{\sigma}P\|_{\dot{B}^{-s}_{2,\infty}}, \label{R-E15}
\end{eqnarray}
where $P=(P^{1},P^{2}\cdot\cdot\cdot,P^{n})$.
\end{lem}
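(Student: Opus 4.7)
The plan is to exploit Plancherel together with the pointwise decay of the linear semigroup's Fourier symbol at low frequencies, then split the resulting integral dyadically to trade derivatives against the $\dot B^{-s}_{2,\infty}$ seminorm of $\Lambda^\sigma P$.

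First I would pass to Fourier variables. Since $\Delta_{-1}$ is the Fourier multiplier by $\chi(\xi)$, supported in the ball $|\xi|\leq 4/3$, Plancherel gives
\begin{equation*}
\Big\|\Delta_{-1}\Lambda^{\ell}\Big[\mathcal{G}(t)(A^{0})^{-1}\sum_{k=1}^{n}P^{k}_{x_{k}}\Big]\Big\|_{L^{2}}^{2}
\lesssim \int_{|\xi|\leq 4/3} |\xi|^{2\ell}\, \bigl|e^{t\Phi(i\xi)}\bigr|^{2}\, |\xi|^{2}\,|\widehat{P}(\xi)|^{2}\, d\xi,
\end{equation*}
where the extra factor $|\xi|^{2}$ comes from $\widehat{P^{k}_{x_{k}}}(\xi)=i\xi_{k}\widehat{P^{k}}(\xi)$ (and $(A^{0})^{-1}$ is harmless). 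On the support of $\chi$ the dissipative bound ${\rm Re}\,\lambda(i\xi)\leq -c\eta_{1}(\xi)$ with $\eta_{1}(\xi)=|\xi|^{2}/(1+|\xi|^{2})\gtrsim |\xi|^{2}$ yields the pointwise semigroup estimate $|e^{t\Phi(i\xi)}|\lesssim e^{-c'|\xi|^{2}t}$, which is exactly the low-frequency instance of the pointwise Littlewood-Paley estimate used in \cite{XK2}.

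Next I would decompose the integral dyadically using the homogeneous blocks $\dot\Delta_{j}$, $j\leq 0$. On each shell $|\xi|\sim 2^{j}$ I factor
\begin{equation*}
|\xi|^{2\ell+2}|\widehat{P}(\xi)|^{2} \;=\; |\xi|^{2(\ell+1-\sigma)}\, \bigl(|\xi|^{\sigma}|\widehat{P}(\xi)|\bigr)^{2} \sim 2^{2j(\ell+1-\sigma)}\,|\widehat{\dot\Delta_{j}\Lambda^{\sigma}P}(\xi)|^{2},
\end{equation*}
so that, after inserting $2^{2js}\cdot 2^{-2js}$ to pull out the $\dot B^{-s}_{2,\infty}$ seminorm,
\begin{equation*}
\int_{|\xi|\sim 2^{j}}\!\!|\xi|^{2\ell+2}e^{-c'|\xi|^{2}t}|\widehat{P}|^{2}\, d\xi \;\lesssim\; 2^{2j(\ell+1-\sigma+s)}\, e^{-c'' 2^{2j} t}\, \|\Lambda^{\sigma}P\|_{\dot B^{-s}_{2,\infty}}^{2}.
\end{equation*}
Summing over $j\leq 0$ gives $\|\Lambda^{\sigma}P\|_{\dot B^{-s}_{2,\infty}}^{2}\sum_{j\leq 0}2^{j\alpha}e^{-c'' 2^{2j}t}$ with $\alpha=2(\ell+1-\sigma+s)$.

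The hypotheses $\ell+1-\sigma\geq 0$ and $s>0$ guarantee $\alpha\geq 2s>0$, so the classical dyadic summation lemma
\begin{equation*}
\sum_{j\leq 0} 2^{j\alpha}\, e^{-c'' 2^{2j} t} \;\lesssim\; (1+t)^{-\alpha/2} \qquad (\alpha>0)
\end{equation*}
(verified by splitting at $2^{2j}\sim 1/t$, or comparing with $\int_{0}^{1}r^{\alpha-1}e^{-c''r^{2}t}dr$) delivers
\begin{equation*}
\|\Delta_{-1}\Lambda^{\ell}[\cdots]\|_{L^{2}}^{2} \lesssim (1+t)^{-(s+\ell+1-\sigma)}\|\Lambda^{\sigma}P\|_{\dot B^{-s}_{2,\infty}}^{2},
\end{equation*}
which is the claim after taking square roots. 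The only genuinely delicate point is ensuring that the semigroup decay rate from the Kawashima-Shizuta structure actually extracts $e^{-c|\xi|^{2}t}$ on the low-frequency ball (as opposed to the weaker $e^{-c\eta_{1}(\xi)t}$ that also controls high frequencies); this is precisely where the restriction to $\Delta_{-1}$ is essential, since $\eta_{1}(\xi)\gtrsim |\xi|^{2}$ on $\mathrm{supp}\,\chi$. Everything else is bookkeeping of indices, and positivity of $\alpha$ is exactly the role played by the structural hypothesis $\ell+1-\sigma\geq 0$.
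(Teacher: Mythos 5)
Your argument is correct, but it is not the route the paper takes for this lemma. The paper's proof first reduces, via Plancherel, to estimating $\|\Lambda^{\ell'}\tilde z_{-1}\|_{L^2}$ with $\ell'=\ell+1-\sigma$ for the solution $\tilde z=\mathcal{G}(t)(\Lambda^{\sigma}P)$ of the linearized system, and then extracts the algebraic decay from a \emph{nonlinear differential inequality}: the L--P pointwise energy estimate (\ref{R-E17}) is combined with the interpolation inequality of Lemma \ref{lem6.2} and with the uniform-in-time bound $\|\tilde z\|_{\dot B^{-s}_{2,\infty}}\le\|\Lambda^{\sigma}P\|_{\dot B^{-s}_{2,\infty}}$ coming from (\ref{R-E20}), yielding an ODE of the form $\frac{d}{dt}X+CX^{1+\frac{1}{\ell'+s}}\le 0$ whose solution decays like $(1+t)^{-(\ell'+s)}$. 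You instead use the pointwise Gaussian-type symbol bound $|e^{t\Phi(i\xi)}|\lesssim e^{-c'|\xi|^2t}$ on $\mathrm{supp}\,\chi$ and a direct dyadic summation $\sum_{j\le 0}2^{j\alpha}e^{-c''2^{2j}t}\lesssim(1+t)^{-\alpha/2}$; this is essentially the technique the paper reserves for the homogeneous analogue, Lemma \ref{lem3.3} (compare your summation with (\ref{R-E26})--(\ref{R-E27})), transplanted to the block $\Delta_{-1}$ by covering the ball $|\xi|\le 4/3$ with the shells $|\xi|\sim 2^{j}$, $j\le 0$. Your version is more elementary, avoids the interpolation lemma entirely, and in fact only needs $s+\ell+1-\sigma>0$ rather than the stronger pair of hypotheses $\ell+1-\sigma\ge 0$, $s>0$ (the latter is forced in the paper's proof because Lemma \ref{lem6.2} requires a nonnegative derivative index $\ell'$). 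What the paper's route buys is uniformity of method: the differential-inequality argument is exactly the one used to prove Propositions \ref{prop1.1}--\ref{prop1.2}, and it works directly on the inhomogeneous block without resolving it into homogeneous pieces. One small caveat on your write-up: the bound on $|e^{t\Phi(i\xi)}|$ should be justified through the Lyapunov functional $E[\hat z]\approx|\hat z|^2$ of the Shizuta--Kawashima energy method (as in (\ref{R-E17})) rather than read off from the eigenvalue bound ${\rm Re}\,\lambda(i\xi)\le-c\eta_1(\xi)$ alone, since a spectral bound by itself does not control the norm of a non-normal matrix exponential; you gesture at the right source by citing the pointwise L--P estimate of \cite{XK2}, so this is a presentational point rather than a gap.
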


\begin{proof}
It follows from Plancherel's theorem that
\begin{eqnarray}
\Big\|\Delta_{-1}\Lambda^{\ell}\Big[\mathcal{G}(t)\Big((A^{0})^{-1}\sum^{n}_{k=1}P^{k}_{x_{k}}\Big)\Big]\Big\|_{L^2}\lesssim
\|\Delta_{-1}\Lambda^{\ell'}\mathcal{G}(t)(\Lambda^{\sigma}P)\|_{L^2} \label{R-E16}
\end{eqnarray}
with $\ell'=\ell+1-\sigma$ for $\ell'\geq0$. From the definition of $\mathcal{G}(t)$, we know that $\tilde{z}=\mathcal{G}(t)(\Lambda^{\sigma}P)$ is the solution of (\ref{R-E6}) with the initial data $\Lambda^{\sigma}P$. According to L-P pointwise energy estimates in \cite{XK2}, we achieve that
\begin{eqnarray}
\frac{d}{dt}E[\widehat{\tilde{z}_{-1}}]+c|\xi|^2|\widehat{\tilde{z}_{-1}}|^2\leq0, \label{R-E17}
\end{eqnarray}
for $c>0$, where $\tilde{z}_{-1}:=\Delta_{-1}\tilde{z}.$

Multiplying (\ref{R-E17}) with $|\xi|^{2\ell'}$ and integrating the resulting inequality  over $\mathbb{R}^{n}_{\xi}$, with the aid of Plancherel's theorem, we arrive at
\begin{eqnarray}
\frac{d}{dt}\mathcal{E}[\tilde{z}_{-1}]^2+c_{3}\|\Lambda^{\ell'+1}\tilde{z}_{-1}\|^2_{L^2}\leq0, \label{R-E18}
\end{eqnarray}
where $$\mathcal{E}[\tilde{z}_{-1}]:=\Big(\int_{\mathbb{R}^{n}_{\xi}}|\xi|^{2\ell'}E[\widehat{\tilde{z}_{-1}}]d\xi\Big)^{1/2}\approx\|\Lambda^{\ell'}\tilde{z}_{-1}\|_{L^2}.$$
According to the interpolation inequality related the Besov space $\dot{B}^{-s}_{2,\infty}$ (see Lemma \ref{lem6.2}), we arrive at
\begin{eqnarray}
\|\Lambda^{\ell'}\tilde{z}_{-1}\|_{L^2} &\lesssim&\|\Lambda^{\ell'+1}\tilde{z}_{-1}\|^{\theta}_{L^2}\|\tilde{z}_{-1}\|^{1-\theta}_{\dot{B}^{-s}_{2,\infty}}\ \ \  \Big(\theta=\frac{\ell'+s}{\ell'+1+s}\Big)\nonumber\\ &\lesssim& \|\Lambda^{\ell'+1}\tilde{z}_{-1}\|^{\theta}_{L^2}\|\tilde{z}\|^{1-\theta}_{\dot{B}^{-s}_{2,\infty}}, \label{R-E19}
\end{eqnarray}

In addition, by applying the homogeneous operator $\dot{\Delta}_{j}(j\in \mathbb{Z})$ to the system (\ref{R-E6}) and performing the inter product with
$\dot{\Delta}_{j}\tilde{z}$, we can infer that there exists a constant $c_{0}>0$ such that
\begin{eqnarray}
\frac{1}{2}\frac{d}{dt}(\tilde{A}^{0}\widehat{\dot{\Delta}_{j}\tilde{z}},\widehat{\dot{\Delta}_{j}\tilde{z}})+c_{0}\|(I-\mathcal{P})\dot{\Delta}_{j}\tilde{z}\|^2_{L^2}\leq0, \label{R-E20}
\end{eqnarray}
where $\mathcal{P}$ is the orthogonal projection onto $\mathcal{M}=\mathrm{ker}L$. This immediately leads to
\begin{eqnarray}
\|\tilde{z}\|_{\dot{B}^{-s}_{2,\infty}}\leq\|\Lambda^{\sigma}P\|_{\dot{B}^{-s}_{2,\infty}}.  \label{R-E21}
\end{eqnarray}
Together with (\ref{R-E20}) and (\ref{R-E21}), we are led to the differential inequality
\begin{eqnarray}
\frac{d}{dt}\mathcal{E}[\tilde{z}_{-1}]^2+C\|\Lambda^{\sigma}P\|_{\dot{B}^{-s}_{2,\infty}}^{-\frac{2}{s}}(\|\Lambda^{\ell'}\tilde{w}_{-1}\|^2_{L^2})^{1+\frac{1}{\ell'+s}}\leq0,
\label{R-E22}
\end{eqnarray}
which yields
\begin{eqnarray}
\|\Lambda^{\ell'}\tilde{z}_{-1}\|_{L^2}\lesssim\|\Lambda^{\sigma}P\|_{\dot{B}^{-s}_{2,\infty}}(1+t)^{-\frac{\ell'+s}{2}}.\label{R-E23}
\end{eqnarray}
Note that (\ref{R-E16}), the inequality (\ref{R-E23}) leads to (\ref{R-E15}) directly.
\end{proof}

Similarly, for homogeneous decompositions, we have an analogue of decay property at the low-frequency.
\begin{lem}\label{lem3.3} For $s+\ell+1-\sigma>0$, it holds that
\begin{eqnarray}
&&\Big\|2^{j\ell}\Big\|\dot{\Delta}_{j}\Big[\mathcal{G}(t)\Big((A^{0})^{-1}\sum^{n}_{k=1}P^{k}_{x_{k}}\Big)\Big]\Big\|_{L^2}\Big\|_{l^{r}_{j}(j\leq j_{0})} \nonumber\\ &\lesssim& (1+t)^{-\frac{s+\ell+1-\sigma}{2}}\|\Lambda^{\sigma}P\|_{\dot{B}^{-s}_{2,\infty}} \label{R-E24}
\end{eqnarray}
where $P=(P^{1},P^{2}\cdot\cdot\cdot,P^{n})$, $r\in [1,+\infty]$ and
$j_{0}\in \mathbb{Z}$ is to be determined in Lemma \ref{lem3.4} below.
\end{lem}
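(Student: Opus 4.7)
The plan is to mirror the strategy of Lemma \ref{lem3.2}, but to exploit the fact that the support of the homogeneous block $\dot\Delta_j$ is the annulus $|\xi|\approx 2^j$ --- bounded away from the origin --- so that the L-P pointwise energy estimate of \cite{XK2} yields a clean, uniform exponential factor $e^{-c2^{2j}t}$. This removes the need for the Gagliardo--Nirenberg-type interpolation step (\ref{R-E19}) that was forced on us by the inhomogeneous block $\Delta_{-1}$. The price to pay is that the final bound must be taken in a dyadic $l^r_j$ norm, which is the new ingredient.

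First, I set $\tilde z:=\mathcal{G}(t)(\Lambda^\sigma P)$, so that $\tilde z$ solves (\ref{R-E6}) with initial datum $\Lambda^\sigma P$. Since $\mathcal{G}(t)$ is a Fourier multiplier, Plancherel's theorem on the annulus $|\xi|\approx 2^j$ gives
$$
2^{j\ell}\,\|\dot\Delta_j\mathcal{G}(t)((A^0)^{-1}\textstyle\sum_{k=1}^n P^k_{x_k})\|_{L^2}\,\lesssim\,2^{j\ell'}\,\|\dot\Delta_j\tilde z(t)\|_{L^2},\qquad \ell':=\ell+1-\sigma,
$$
so the problem is reduced to bounding $2^{j\ell'}\|\dot\Delta_j\tilde z(t)\|_{L^2}$ in $l^r_j(j\le j_0)$.

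Next, applying the L-P pointwise energy estimate (\ref{R-E17}) to $\dot\Delta_j\tilde z$ in place of $\tilde z_{-1}$, one obtains
$$
\frac{d}{dt}E[\widehat{\dot\Delta_j\tilde z}]+c\,|\xi|^2\,|\widehat{\dot\Delta_j\tilde z}|^2\le 0.
$$
Because $|\xi|^2\gtrsim 2^{2j}$ on the support of $\varphi(2^{-j}\xi)$ and $E[\cdot]\approx|\cdot|^2$, Gronwall and integration over $\xi$ yield the uniform decay $\|\dot\Delta_j\tilde z(t)\|_{L^2}\lesssim e^{-c2^{2j}t}\|\dot\Delta_j\Lambda^\sigma P\|_{L^2}$. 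Using the definition of $\dot B^{-s}_{2,\infty}$ to bound $\|\dot\Delta_j\Lambda^\sigma P\|_{L^2}\le 2^{js}\|\Lambda^\sigma P\|_{\dot B^{-s}_{2,\infty}}$, and setting $\alpha:=\ell'+s=s+\ell+1-\sigma>0$, this aggregates to
$$
2^{j\ell}\,\|\dot\Delta_j\mathcal{G}(t)((A^0)^{-1}\textstyle\sum_k P^k_{x_k})\|_{L^2}\lesssim 2^{j\alpha}\,e^{-c2^{2j}t}\,\|\Lambda^\sigma P\|_{\dot B^{-s}_{2,\infty}}.
$$

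The remaining, and principal, step is the dyadic summation estimate
$$
\bigl\|2^{j\alpha}e^{-c2^{2j}t}\bigr\|_{l^r_j(j\le j_0)}\lesssim(1+t)^{-\alpha/2}\qquad(\alpha>0,\ r\in[1,+\infty]).
$$
I would establish this by splitting the index set at the balance point $2^{2j}t\sim 1$. For $j$ with $2^{2j}t\le 1$ the exponential factor is $O(1)$ and one sums a geometric series dominated by its largest term $\sim t^{-\alpha/2}$, whose convergence at $j\to-\infty$ is guaranteed precisely by $\alpha>0$. For $j$ with $2^{2j}t>1$ the prefactor $2^{j\alpha}$ is bounded on the boundary by a multiple of $t^{-\alpha/2}$ and the Gaussian factor $e^{-c2^{2j}t}$ provides rapid summability. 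For $t\lesssim 1$ the bound is trivial since $j\le j_0$ is bounded above. This summation step, together with the strict positivity $s+\ell+1-\sigma>0$, is the heart of the argument; once it is in hand, the claim (\ref{R-E24}) follows by combining it with the pointwise estimate above.
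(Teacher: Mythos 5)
Your proposal is correct and follows essentially the same route as the paper: reduce via Plancherel/Bernstein to $2^{j(s+\ell+1-\sigma)}e^{-c2^{2j}t}\|\Lambda^{\sigma}P\|_{\dot{B}^{-s}_{2,\infty}}$ using the low-frequency pointwise decay $e^{-c|\xi|^{2}t}$ on each annulus, then sum the dyadic factor $2^{j\alpha}e^{-c2^{2j}t}$ in $l^{r}_{j}(j\leq j_{0})$, which is exactly the paper's chain (\ref{R-E25})--(\ref{R-E28}). Your explicit splitting at $2^{2j}t\sim1$ merely fills in the detail behind the paper's one-line claim (\ref{R-E27}), and correctly identifies $\alpha=s+\ell+1-\sigma>0$ as what makes the geometric tail converge.
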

\begin{proof}
It follows from that
\begin{eqnarray}
\Big\|\dot{\Delta}_{j}\Big[\mathcal{G}(t)\Big((A^{0})^{-1}\sum^{n}_{i=1}P^{k}_{x_{k}}\Big)\Big]\Big\|_{L^2}&\lesssim& e^{-c|\xi|^2t}\|\dot{\Delta}_{j}P^{k}_{x_{k}}\|_{L^2}
\nonumber\\&\lesssim& 2^{j(1-\sigma)}e^{-c2^{2j}t}\|\dot{\Delta}_{j}\Lambda^{\sigma}p\|_{L^2} \label{R-E25}
\end{eqnarray}
which implies that
\begin{eqnarray}
&&2^{j\ell}\Big\|\dot{\Delta}_{j}\Big[\mathcal{G}(t)\Big((A^{0})^{-1}\sum^{n}_{k=1}P^{k}_{x_{k}}\Big)\Big]\Big\|_{L^2}\nonumber\\&\lesssim&
(1+t)^{-\frac{s+\ell+1-\sigma}{2}}\|\Lambda^{\sigma}P\|_{\dot{B}^{-s}_{2,\infty}}
\Big[(2^{j}\sqrt{t})^{(s+\ell+1-\sigma)}e^{-c(2^{j}\sqrt{t})^2}\Big]. \label{R-E26}
\end{eqnarray}
Note that
\begin{eqnarray}
\Big\|(2^{j}\sqrt{t})^{(s+\ell+1-\sigma)}e^{-c(2^{j}\sqrt{t})^2}\Big\|_{l^{r}_{j}}\lesssim 1, \label{R-E27}
\end{eqnarray}
for any $r\in [1,+\infty]$, we deduce that
\begin{eqnarray}
&&\Big\|2^{j\ell}\Big\|\dot{\Delta}_{j}\Big[\mathcal{G}(t)\Big((A^{0})^{-1}\sum^{n}_{k=1}P^{k}_{x_{k}}\Big)\Big]\Big\|_{L^2}\Big\|_{l^{r}_{j}(j\leq j_{0})} \nonumber\\ &\lesssim& (1+t)^{-\frac{s+\ell+1-\sigma}{2}}\|\Lambda^{\sigma}P\|_{\dot{B}^{-s}_{2,\infty}} \label{R-E28}
\end{eqnarray}
for $s+\ell+1-\sigma>0$, which is the inequality (\ref{R-E24}) exactly.
\end{proof}

For the non-degenerate variable $Q\in\mathcal{M}^{\top}$, we have a sharper decay property at the low-frequency in the case of $n=1$.
\begin{lem}\label{lem3.4}For $n=1$, if $Q\in \mathcal{M}^{\top}$, then it holds that
\begin{eqnarray}
&&\Big\|2^{j\ell}\|\dot{\Delta}_{j}[\mathcal{G}(t)(A^{0})^{-1}Q]\|_{L^2}\Big\|_{l^{r}_{j}(j\leq j_{0})} \nonumber\\ &\lesssim & e^{-ct}\|2^{j\ell}\|\dot{\Delta}_{j}Q\|_{L^2}\|_{l^{r}_{j}(j\leq j_{0})}+(1+t)^{-\frac{s+\ell+1-\sigma}{2}}\|\Lambda^{\sigma}Q\|_{\dot{B}^{-s}_{2,\infty}} \label{R-E29}
\end{eqnarray}
for $s+\ell+1-\sigma>0$ and $r\in[1,+\infty]$.
\end{lem}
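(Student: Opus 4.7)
My plan is to mirror the structure of Lemma \ref{lem3.3}, starting from a sharpened pointwise Fourier estimate for the Green kernel that exploits $Q\in\mathcal{M}^{\top}$. Setting $\tilde{z}(t):=\mathcal{G}(t)(A^{0})^{-1}Q$, the function $\tilde{z}$ solves the linear system (\ref{R-E6}) with datum $\tilde{z}_{0}=(A^{0})^{-1}Q$, so in Fourier it obeys the ODE $\partial_{t}\hat{\tilde{z}}+M(\xi)\hat{\tilde{z}}=0$ with $M(\xi)=(A^{0})^{-1}(i\xi A^{1}+L)$. The key step is to establish
\begin{equation*}
\bigl|\hat{\tilde{z}}(t,\xi)\bigr|\lesssim\bigl(|\xi|\,e^{-c|\xi|^{2}t}+e^{-ct}\bigr)|\hat{Q}(\xi)|,\qquad |\xi|\leq R,
\end{equation*}
for low frequencies $|\xi|\leq R$ with $R\simeq 2^{j_{0}}$. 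This improves on the generic bound $e^{-c|\xi|^{2}t}|\hat{Q}|$ used in Lemma \ref{lem3.3} by an extra factor $|\xi|$ in front of the slow mode, together with a genuinely exponential summand coming from the direct $L$-damping of $\mathcal{M}^{\top}$ components.

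To produce this bound I would exploit the compatibility of $A^{0}$ and $L$ with the splitting $\mathbb{R}^{N}=\mathcal{M}\oplus\mathcal{M}^{\top}$ inherent in the entropy-based symmetrization of \cite{KY2}, under which $(A^{0})^{-1}Q\in\mathcal{M}^{\top}$. At $\xi=0$, the matrix $M(0)=(A^{0})^{-1}L$ has kernel $\mathcal{M}$ and its remaining eigenvalues are bounded below by a positive constant $c_{0}$. Analytic perturbation of $M(\xi)$ for small $|\xi|$ then decomposes the evolution into a slow sector with eigenvalues $\lambda_{\mathrm{sl}}(\xi)=O(|\xi|^{2})$ satisfying $\mathrm{Re}\,\lambda_{\mathrm{sl}}\leq -c|\xi|^{2}$ (the sign provided by Condition (K)) whose spectral projection equals $\mathcal{P}+O(|\xi|)$, and a fast sector with eigenvalues at most $-c_{0}/2+O(|\xi|)$. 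Because $\tilde{z}_{0}\in\mathcal{M}^{\top}$, its slow coefficient is $O(|\xi|)|\hat{Q}|$, yielding the prefactor $|\xi|e^{-c|\xi|^{2}t}$, while its fast coefficient produces the summand $e^{-ct}|\hat{Q}|$. The restriction $n=1$ enters essentially here, since the perturbative splitting in the single scalar variable $\xi$ is clean, whereas for $n\geq 2$ one must parametrize over $\mathbb{S}^{n-1}\times(0,R)$.

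Once the pointwise bound is in hand, Plancherel gives the dyadic estimate
\begin{equation*}
\|\dot{\Delta}_{j}\tilde{z}(t)\|_{L^{2}}\lesssim 2^{j}e^{-c 2^{2j}t}\|\dot{\Delta}_{j}Q\|_{L^{2}}+e^{-ct}\|\dot{\Delta}_{j}Q\|_{L^{2}},\qquad j\leq j_{0}.
\end{equation*}
Multiplying by $2^{j\ell}$ and taking the $l^{r}_{j}$-norm over $j\leq j_{0}$, the exponentially damped contribution is directly $e^{-ct}\|2^{j\ell}\|\dot{\Delta}_{j}Q\|_{L^{2}}\|_{l^{r}_{j}(j\leq j_{0})}$. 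For the polynomial contribution, proceeding as in the proof of Lemma \ref{lem3.3}, I would use $\|\dot{\Delta}_{j}Q\|_{L^{2}}\leq 2^{j(s-\sigma)}\|\Lambda^{\sigma}Q\|_{\dot{B}^{-s}_{2,\infty}}$ together with
\begin{equation*}
2^{j(\ell+1+s-\sigma)}e^{-c 2^{2j}t}=t^{-\frac{s+\ell+1-\sigma}{2}}\bigl(2^{j}\sqrt{t}\,\bigr)^{s+\ell+1-\sigma}e^{-c(2^{j}\sqrt{t})^{2}};
\end{equation*}
the hypothesis $s+\ell+1-\sigma>0$ makes the $l^{r}_{j}$-norm of $(2^{j}\sqrt{t})^{s+\ell+1-\sigma}e^{-c(2^{j}\sqrt{t})^{2}}$ uniformly bounded in $t>0$, and the passage from $t^{-\alpha}$ to $(1+t)^{-\alpha}$ for small $t$ is routine by either a trivial Young-type bound or absorption into the exponential term. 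Summing the two contributions yields (\ref{R-E29}).

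The main obstacle is the sharp pointwise estimate. A naive Duhamel iteration on $\mathcal{P}\hat{\tilde{z}}$ fails because the self-coupling $\mathcal{P}(A^{0})^{-1}A^{1}\mathcal{P}$ reintroduces $\mathcal{M}$ components at leading order and produces a nonuniform $|\xi|^{-1}$ loss; one must instead keep track of the slow/fast spectral decomposition of $M(\xi)$. An alternative to the explicit diagonalization, which might be cleaner for extensions, is to design a frequency-localized Lyapunov functional augmenting the standard Kawashima compensator by a weighted cross term on the $\mathcal{M}$-block of order $|\xi|^{-1}$, so that both decay rates can be read off from a single differential inequality; either route should deliver the sharp pointwise bound from which (\ref{R-E29}) follows.
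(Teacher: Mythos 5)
Your proposal follows essentially the same route as the paper: the whole lemma rests on the low-frequency pointwise bound $|e^{t\Phi(i\xi)}(A^{0})^{-1}\hat{Q}(\xi)|\lesssim e^{-ct}|\hat{Q}(\xi)|+|\xi|e^{-c|\xi|^{2}t}|\hat{Q}(\xi)|$ for $|\xi|\leq r_{0}$, after which the dyadic cut-off at $j\leq j_{0}$, Plancherel, and the $l^{r}_{j}$ summation are verbatim the argument of Lemma \ref{lem3.3}, exactly as you describe. The one divergence is that the paper simply quotes this pointwise estimate as (\ref{R-E30}) from \cite{KY2} (this citation is where $n=1$ and $Q\in\mathcal{M}^{\top}$ enter), whereas you sketch a derivation by analytic perturbation of $M(\xi)=(A^{0})^{-1}(i\xi A^{1}+L)$. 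Your sketch is viable but has small inaccuracies you would need to repair if carrying it out: $(A^{0})^{-1}Q$ need not lie in $\mathcal{M}^{\top}$, and the slow spectral projection at $\xi=0$ is the projection onto $\mathcal{M}=\ker M(0)$ along $\mathrm{Range}\,M(0)=(A^{0})^{-1}\mathcal{M}^{\top}$, not the orthogonal projection $\mathcal{P}$; what actually saves the argument is that $(A^{0})^{-1}Q\in\mathrm{Range}\,M(0)$, so the slow component of the datum is still $O(|\xi|)|\hat{Q}|$. Likewise the slow eigenvalues are $O(|\xi|)$ with purely imaginary leading part (only their real parts are $\leq -c|\xi|^{2}$), and one needs uniform control of the matrix exponential on the slow block, not merely eigenvalue bounds. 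None of this affects the validity of the lemma; your second half and the reduction $\|\dot{\Delta}_{j}Q\|_{L^{2}}\leq 2^{j(s-\sigma)}\|\Lambda^{\sigma}Q\|_{\dot{B}^{-s}_{2,\infty}}$ match the paper's computation leading from (\ref{R-E32}) to (\ref{R-E29}).
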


\begin{proof}
To prove this, we recall a conclusion achieved in \cite{KY2} for the case of one dimension. Precisely, there exists a constant $r_{0}>0$ such that
\begin{eqnarray}
|e^{t\Phi(i\xi)}(A^{0})^{-1}\hat{Q}(\xi)|\lesssim e^{-ct}|\hat{Q}(\xi)|+|\xi|e^{-c|\xi|^2t}|\hat{Q}(\xi)| \label{R-E30}
\end{eqnarray}
for $|\xi|\leq r_{0}.$  From (\ref{R-E30}), we take the Fourier cut-off of $\dot{\Delta}_{j}$ with $j\leq j_{0}:=[\log_{2}(\frac{3}{8}r_{0})]$ \footnote{$[x]$ means the maximum integer value which is less than $x$.} to get
\begin{eqnarray}
|e^{t\Phi(i\xi)}(A^{0})^{-1}\widehat{\dot{\Delta}_{j}Q}|\lesssim e^{-ct}|\widehat{\dot{\Delta}_{j}Q}|+|\xi|e^{-c|\xi|^2t}|\widehat{\dot{\Delta}_{j}Q}|. \label{R-E31}
\end{eqnarray}
It follows from Plancherel's theorem that
\begin{eqnarray}
&&\|\dot{\Delta}_{j}[\mathcal{G}(t)(A^{0})^{-1}Q]\|_{L^2}\nonumber\\& \lesssim& e^{-ct}\|\dot{\Delta}_{j}Q\|_{L^2}+2^{j(1-\sigma)}e^{-c2^{2j}t}\|\dot{\Delta}_{j}\Lambda^{\sigma}Q\|_{L^2}. \label{R-E32}
\end{eqnarray}
Then, similar to the procedure leading to (\ref{R-E28}), we arrive at (\ref{R-E29}) readily.
\end{proof}

\section{Localized time-weighted energy approaches} \setcounter{equation}{0} \label{sec:4}
Based on decay properties developed in Section \ref{sec:3}, the aim of this section is to derive decay estimates for (\ref{R-E3})-(\ref{R-E4}).
For that purpose,  time-weighted energy approaches in terms of low-frequency and high-frequency decompositions, as well as
improved Gagliardo-Nirenberg-Sobolev inequalities are mainly performed. In this section, our discussion is restricted to the case $n=1$.
Firstly, we introduce some time-weighted sup-norms as follows.
\begin{eqnarray*}
\mathcal{E}_{0}(t):=\sup_{0\leq\tau\leq t}\|z(\tau)\|_{B^{3/2}_{2,1}};
\end{eqnarray*}
\begin{eqnarray*}
\mathcal{E}_{1}(t):=\sup_{0\leq\tau\leq t}(1+\tau)^{\frac{s}{2}}\|z(\tau)\|_{L^2}+\sup_{0<\ell\leq 1/2}\sup_{0\leq\tau\leq t}(1+\tau)^{\frac{s+\ell}{2}}\|z(\tau)\|_{\dot{B}^{\ell}_{2,1}}
\end{eqnarray*}
with $s\in (0,1/2]$. Let us mention that the time-weighted sup-norms are different in regard to the derivative index in $\mathcal{E}_{1}(t)$, since we take care of the sequent use of Propositions \ref{prop2.2}-\ref{prop2.3}, where the regularity index should be positive.

Precisely, we shall prove the following result.
\begin{prop}\label{prop4.1}
Let $z(t,x)$ be the global classical solution in
$\widetilde{\mathcal{C}}(B^{3/2}_{2,1}(\mathbb{R}))\cap
\widetilde{\mathcal{C}}^1(B^{1/2}_{2,1}(\mathbb{R}))$. Additional, if $z_{0}\in \dot{B}^{-s}_{2,\infty}(\mathbb{R})(1/4<s\leq 1/2)$, then
\begin{eqnarray}
\mathcal{E}_{1}(t)\lesssim E_{0}+\mathcal{E}_{1}(t)^{2}+\mathcal{E}_{0}(t)\mathcal{E}_{1}(t), \label{R-E34}
\end{eqnarray}
where $E_{0}$ is defined as Theorem \ref{thm1.1}.
\end{prop}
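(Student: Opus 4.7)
The plan is to apply the frequency-localization Duhamel principle of Lemma~\ref{lem3.1} to the nonlinear system \eqref{R-E3}, split each norm appearing in $\mathcal{E}_1(t)$ into low- and high-frequency pieces around the threshold $j_0$ fixed by Lemma~\ref{lem3.4}, control the linear propagator via Propositions~\ref{prop1.1}--\ref{prop1.2}, and handle the nonlinear convolution integrals by combining Lemmas~\ref{lem3.2}--\ref{lem3.4} with the product and composition estimates of Section~\ref{sec:2}. Throughout, the quadratic structure $P,Q=O(z^2)$ with $P(0)=P'(0)=Q(0)=Q'(0)=0$ will be exploited systematically through Propositions~\ref{prop2.1} and \ref{prop2.3}.

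The linear contribution $\mathcal{G}(t)z_0$ produces exactly $E_0(1+t)^{-(s+\ell)/2}$ by Propositions~\ref{prop1.1} and \ref{prop1.2} (the former for the $L^2$ part $\ell=0$, the latter for the homogeneous Besov parts with $\ell\in(0,1/2]$), and accounts for $E_0$ in the desired bound. For the high-frequency block $j\geq j_0$ the exponential decay in $j$ provided by Proposition~\ref{prop1.1}, together with Propositions~\ref{prop2.1}--\ref{prop2.3}, yields
\begin{equation*}
\|P\|_{B^{3/2}_{2,1}}+\|Q\|_{B^{3/2}_{2,1}}\lesssim\|z\|_{L^{\infty}}\|z\|_{B^{3/2}_{2,1}}\lesssim\mathcal{E}_0(t)\,\|z(\tau)\|_{\dot{B}^{\ell}_{2,1}},
\end{equation*}
so that after integration in $\tau$ this block contributes $\mathcal{E}_0(t)\mathcal{E}_1(t)$.

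The heart of the argument is the low-frequency block, where Lemmas~\ref{lem3.2}--\ref{lem3.4} enter. Following the strategy emphasized in the introduction, I would split the time integral as $\int_0^{t/2}+\int_{t/2}^t$ and insert two different values of the parameter $\sigma$. On $[0,t/2]$, where $(1+t-\tau)\approx(1+t)$, choose $\sigma=0$: Lemma~\ref{lem3.3} for the $P_x$ term, Lemma~\ref{lem3.4} for the $Q$ term, and Lemma~\ref{lem3.2} on the $\Delta_{-1}$ block when $\ell=0$, each places a strong kernel $(1+t)^{-(s+\ell+1)/2}$ in front of a source measured in $\dot{B}^{-s}_{2,\infty}$; the one-dimensional Besov product rule from Proposition~\ref{prop2.1} and the quadratic structure of $P,Q$ then bound this source by $\|z\|_{L^2}\|z\|_{\dot{B}^{1/2-s}_{2,1}}\lesssim\mathcal{E}_1(t)^2(1+\tau)^{-s/2-1/4}$. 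On $[t/2,t]$, where $(1+\tau)\approx(1+t)$, switch to $\sigma=s+1/2$: the kernel relaxes to the gentler $(1+t-\tau)^{-(\ell+1/2)/2}$, while the source $\|\Lambda^{s+1/2}P\|_{\dot{B}^{-s}_{2,\infty}}\lesssim\|P\|_{\dot{B}^{1/2}_{2,1}}$ is controlled via Propositions~\ref{prop2.1}--\ref{prop2.3} by $\|z\|_{L^{\infty}}\|z\|_{\dot{B}^{1/2}_{2,1}}\lesssim\mathcal{E}_0(t)\mathcal{E}_1(t)(1+\tau)^{-(s+1/2)/2}$; the additional exponential term in Lemma~\ref{lem3.4} causes no trouble. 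Matching the two halves produces the target decay $(1+t)^{-(s+\ell)/2}$ together with the quadratic contributions $\mathcal{E}_1(t)^2$ and $\mathcal{E}_0(t)\mathcal{E}_1(t)$.

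The hardest step is the algebraic balance on the low-frequency block: any other choice of $\sigma$ either makes the time convolution non-integrable or fails to control the source norm in $\dot{B}^{-s}_{2,\infty}$, and the strict positivity condition $s+\ell+1-\sigma>0$ of Lemmas~\ref{lem3.3}--\ref{lem3.4}, combined with the one-dimensional Besov product rule (which asks $s_1+s_2>0$), is exactly what forces the lower bound $s>1/4$ absent in the higher-dimensional analysis of \cite{XK2}. Once the low- and high-frequency bounds are summed and the suprema in $\tau\in[0,t]$ and $\ell\in(0,1/2]$ are taken, the desired inequality $\mathcal{E}_1(t)\lesssim E_0+\mathcal{E}_1(t)^2+\mathcal{E}_0(t)\mathcal{E}_1(t)$ follows.
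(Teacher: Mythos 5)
Your overall architecture coincides with the paper's: the Duhamel formula of Lemma~\ref{lem3.1}, the high/low frequency splitting at $j_{0}$, the time splitting $\int_{0}^{t/2}+\int_{t/2}^{t}$ with $\sigma=0$ and $\sigma=s+1/2$ in Lemmas~\ref{lem3.3}--\ref{lem3.4}, and the linear part via Propositions~\ref{prop1.1}--\ref{prop1.2}. The genuine gap lies in your estimates of the low-frequency source terms, and it is quantitative: they decay too slowly in $\tau$ to close the convolution. On $[0,t/2]$ you bound $\|P\|_{\dot B^{-s}_{2,\infty}}$ through the product law by $\|z\|_{L^{2}}\|z\|_{\dot B^{1/2-s}_{2,1}}\lesssim\mathcal{E}_{1}(t)^{2}(1+\tau)^{-s/2-1/4}$. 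Feeding this into the kernel $(1+t-\tau)^{-(s+\ell+1)/2}$ gives $\mathcal{I}_{1}\lesssim\mathcal{E}_{1}(t)^{2}(1+t)^{-(s+\ell)/2}\cdot(1+t)^{1/4-s/2}$, which overshoots the target for every $s<1/2$. The paper estimates this source in a genuinely different way: the embedding $L^{m}\hookrightarrow\dot B^{-s}_{2,\infty}$ with $1/m=s+1/2$ (Lemma~\ref{lem6.5}), H\"older in the form $\|P\|_{L^{m}}\lesssim\|z\|_{L^{1/s}}\|z\|_{L^{2}}$, and the Gagliardo--Nirenberg inequality of Lemma~\ref{lem6.4} applied to $\|z\|_{L^{1/s}}$, which yields the faster decay $(1+\tau)^{-s-\varepsilon/2}$ with $s+\varepsilon/2\geq1/2$; see (\ref{R-E42})--(\ref{R-E46}). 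It is the admissibility constraint $1-2s\leq\varepsilon<\alpha\leq1/2$ of this interpolation --- not the condition $s_{1}+s_{2}>0$ of the product law, nor $s+\ell+1-\sigma>0$ --- that forces $s>1/4$. Your product-law route in fact degenerates at the opposite endpoint $s=1/2$, where $s_{1}+s_{2}=1/2-s=0$, and Proposition~\ref{prop2.3} cannot be invoked at the negative regularity $-s$ to reduce $O(z^{2})$ to a product in the first place.

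The second half of the time integral has the same defect. On $[t/2,t]$ you bound $\|\Lambda^{s+1/2}P\|_{\dot B^{-s}_{2,\infty}}\lesssim\|z\|_{L^{\infty}}\|z\|_{\dot B^{1/2}_{2,1}}\lesssim\mathcal{E}_{0}(t)\mathcal{E}_{1}(t)(1+\tau)^{-(s+1/2)/2}$: one factor of $z$ is absorbed into $\mathcal{E}_{0}(t)$ and carries no time decay. The resulting convolution gives at best $(1+t)^{-(s+1/2)/2+3/4-\ell/2}=(1+t)^{-(s+\ell)/2+1/2}$, off by a growing factor $(1+t)^{1/2}$. The paper instead applies Proposition~\ref{prop2.3} at the positive regularity $1/2$ to get $\|P\|_{\dot B^{1/2}_{2,\infty}}\lesssim\|z\|^{2}_{\dot B^{1/2}_{2,1}}$, so that \emph{both} factors decay in time (see (\ref{R-E48})); this doubling of the source decay is precisely why the paper stresses that the structure $P,Q=O(z^{2})$ is essential. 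With these two source estimates replaced by the paper's, the remainder of your argument (linear part, high-frequency block, and the bookkeeping of the suprema over $\tau$ and $\ell$) goes through as you describe.
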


\begin{proof}
In \cite{XK2}, we developed time-weighted energy approaches in terms of high-frequency and low-frequency decompositions.
However, the analysis can not be copied directly in the case of one dimension. As a matter of fact, more elaborate computations are needed at the low-frequency.
Fortunately, those properties stated in Section \ref{sec:3} enable us to overcome the difficulty. Additionally, it is worth noting that
the composite functions $P$ and $Q$ both have the special form of $O(z^2)$, which is helpful to obtain desired decay estimates, see (\ref{R-E42})
and (\ref{R-E48}) below. For clarity, The proof is spitted into high-frequency and low-frequency estimates.\\

\noindent\textit{\textbf{Step 1: High-frequency estimate}}

From Propositions \ref{prop1.1}-\ref{prop1.2}, we have
\begin{eqnarray}
\|\dot{\Delta}_{j}\mathcal{G}(x,t)z_{0}\|_{L^2}\lesssim e^{-ct}\|\dot{\Delta}_{j}z_{0}\|_{L^2} \label{R-E344}
\end{eqnarray}
for $c>0$ and all $j\geq j_{0}$.

In the case of $0<\ell\leq \frac{1}{2}$, it follows from Lemma \ref{lem3.1} that
\begin{eqnarray}
&&\sum_{j\geq j_{0}}2^{j\ell}\|\dot{\Delta}_{j}z\|_{L^2}
\nonumber\\&\lesssim & e^{-ct}\|z_{0}\|_{\dot{B}^{\ell}_{2,1}}+\int^{t}_{0}e^{-c(t-\tau)}\|P_{x}+Q\|_{\dot{B}^{\ell}_{2,1}}d\tau. \label{R-E35}
\end{eqnarray} By Propositions \ref{prop2.1} and \ref{prop2.2}, we arrive at
\begin{eqnarray}
\|P_{x}\|_{\dot{B}^{\ell}_{2,1}}&=&\|D_{U}PU_{x}\|_{\dot{B}^{\ell}_{2,1}}\nonumber\\&\lesssim & \|D_{U}P\|_{\dot{B}^{\ell}_{2,1}}\|U_{x}\|_{\dot{B}^{1/2}_{2,1}}
\nonumber\\&\lesssim &\|z\|_{\dot{B}^{\ell}_{2,1}}\|z\|_{\dot{B}^{3/2}_{2,1}}\nonumber\\&\lesssim & (1+\tau)^{-\frac{s+\ell}{2}}\mathcal{E}_{0}(t)\mathcal{E}_{1}(t). \label{R-E36}
\end{eqnarray}
It follows from Proposition \ref{prop2.3} that
\begin{eqnarray}
\|Q\|_{\dot{B}^{\ell}_{2,1}}\lesssim \|z\|^2_{\dot{B}^{\ell}_{2,1}}\lesssim (1+\tau)^{-(s+\ell)}\mathcal{E}_{1}(t)^2. \label{R-E37}
\end{eqnarray}
Hence, it follows from (\ref{R-E35})-(\ref{R-E37}) that
\begin{eqnarray}
&&\sum_{j\geq j_{0}}2^{j\ell}\|\dot{\Delta}_{j}z\|_{L^2}\nonumber\\&\lesssim & e^{-ct}\|z_{0}\|_{B^{1/2}_{2,1}}+(1+t)^{-\frac{s+\ell}{2}}\mathcal{E}_{0}(t)\mathcal{E}_{1}(t)+(1+t)^{-(s+\ell)}\mathcal{E}_{1}(t)^2 \label{R-E38}
\end{eqnarray}
for $0<\ell\leq \frac{1}{2}$.

On the other hand, in the case of $\ell=0$, it follows from Lemma \ref{lem3.1} and Minkowski inequality that
\begin{eqnarray}
\|\|\dot{\Delta}_{j}z\|_{L^2}\|_{l^{2}_{j}(j\geq j_{0})}\lesssim e^{-ct}\|z_{0}\|_{L^2}+\int^{t}_{0}e^{-c(t-\tau)}\|P_{x}+Q\|_{L^2}d\tau, \label{R-E388}
\end{eqnarray}
where using the fact $P(z)=O(z^2)=Q(z)$ implies that
\begin{eqnarray}
\|P_{x}+Q\|_{L^2}\lesssim \|z(\tau)\|_{L^2}\|z(\tau)\|_{B^{3/2}_{2,1}}\lesssim (1+\tau)^{-\frac{s}{2}}\mathcal{E}_{0}(t)\mathcal{E}_{1}(t).\label{R-E389}
\end{eqnarray}
 Hence, we obtain
\begin{eqnarray}
\|\|\dot{\Delta}_{j}z\|_{L^2}\|_{l^{2}_{j}(j\geq j_{0})}\lesssim e^{-ct}\|z_{0}\|_{L^2}+(1+t)^{-\frac{s}{2}}\mathcal{E}_{0}(t)\mathcal{E}_{1}(t). \label{R-E390}
\end{eqnarray}

\noindent\textit{\textbf{Step 2: Low-frequency estimate}}

According to Lemma \ref{lem3.1}, Lemma \ref{lem3.4} and Propositions \ref{prop1.1}-\ref{prop1.2}, we obtain
\begin{eqnarray}
&&\sum_{j\leq j_{0}}2^{q\ell}\|\dot{\Delta}_{j}z\|_{L^2}\nonumber\\&\leq& \sum_{j\leq j_{0}}2^{q\ell}\|\dot{\Delta}_{j}\mathcal{G}(t)z_{0}\|_{L^2}
+\int^{t}_{0}\sum_{j\leq j_{0}}2^{q\ell}\|\dot{\Delta}_{j}\mathcal{G}(t-\tau)(A^{0})^{-1}(P_{x}+Q)(\tau)\|_{L^2}d\tau
\nonumber\\ &\lesssim & \|z_{0}\|_{\dot{B}^{-s}_{2,\infty}}(1+t)^{-\frac{s+\ell}{2}}+\int^{t}_{0}e^{-c(t-\tau)} \|Q\|_{\dot{B}^{\ell}_{2,1}}d\tau+I_{\ell}(t), \label{R-E39}
\end{eqnarray}
for $0<\ell\leq \frac{1}{2}$,
where
$$I_{\ell}(t)=\Big(\int^{t/2}_{0}+\int^{t}_{t/2}\Big)(\cdot\cdot\cdot):=\mathcal{I}_{1}(t)+\mathcal{I}_{2}(t).$$
We would like to point out the second term on the right-side of (\ref{R-E39}) is the consequence of using Lemma \ref{lem3.4}. Noticing (\ref{R-E37}), it is easy to get
\begin{eqnarray}
\int^{t}_{0}e^{-c(t-\tau)} \|Q\|_{\dot{B}^{\ell}_{2,1}}d\tau\lesssim (1+t)^{-(s+\ell)}\mathcal{E}_{1}(t)^2. \label{R-E40}
\end{eqnarray}
Additionally, for the case of $\ell=0$, we can reach a similar inequality as (\ref{R-E39})
\begin{eqnarray}
&&\|\|\dot{\Delta}_{j}z\|_{L^2}\|_{l^{2}_{j}(j\leq j_{0})}\nonumber\\&\lesssim& \|z_{0}\|_{\dot{B}^{-s}_{2,\infty}}(1+t)^{-\frac{s}{2}}+\int^{t}_{0}e^{-c(t-\tau)} \|Q\|_{L^2}d\tau+I_{0}(t), \label{R-E400}
\end{eqnarray}
where $I_{0}(t):=I_{\ell}(t)|_{\ell=0}$ and
\begin{eqnarray} \int^{t}_{0}e^{-c(t-\tau)} \|Q\|_{L^2}d\tau\lesssim (1+t)^{-\frac{s}{2}}\mathcal{E}_{0}(t)\mathcal{E}_{1}(t). \label{R-E401}  \end{eqnarray}

Next, we turn to the main estimate for $I_{\ell}(t)(0\leq\ell\leq 1/2)$. Taking $\sigma=0$ in Lemmas \ref{lem3.3}-\ref{lem3.4}, we obtain
\begin{eqnarray}
\mathcal{I}_{1}(t)\lesssim \int^{t/2}_{0}(1+t-\tau)^{-\frac{s+\ell+1}{2}}(\|P\|_{\dot{B}^{-s}_{2,\infty}}+\|Q\|_{\dot{B}^{-s}_{2,\infty}})d\tau. \label{R-E41}
\end{eqnarray}
It suffices to estimate the norm $\|P\|_{\dot{B}^{-s}_{2,\infty}}$, since both $P,Q=O(z^2)$. From the embedding property $L^{m}\hookrightarrow\dot{B}^{-s}_{2,\infty}$ in Lemma \ref{lem6.5} with $1/m=s+1/2 \ (s\in (0,1/2])$, we get
\begin{eqnarray}
\|P\|_{\dot{B}^{-s}_{2,\infty}}\lesssim \|P\|_{L^{m}} \lesssim \|z\|_{L^{1/s}}\|z\|_{L^2}. \label{R-E42}
\end{eqnarray}
Using interpolation inequalities in Lemma \ref{lem6.4}, we are led to
\begin{eqnarray}
\|z\|_{L^{1/s}}\|z\|_{L^2}&\lesssim& \|\Lambda^{\varepsilon}z\|^{\theta}_{L^2}\|\Lambda^{\alpha}z\|^{1-\theta}_{L^2}\|z\|_{L^2}\nonumber\\ &\lesssim &
(\|\Lambda^{\varepsilon}z\|_{L^2}+\|\Lambda^{\alpha}z\|_{L^2})\|z\|_{L^2}, \label{R-E43}
\end{eqnarray}
where the parameter couple $(\varepsilon,\alpha)$ is subjected to the constraint
$1/2-s\leq 1-2s\leq\varepsilon<\alpha\leq1/2$. Clearly, $s\in(1/4,1/2]$ is additionally needed. In this case, let us note that
$\theta=\frac{\alpha+s-1/2}{\alpha-\varepsilon}$ in (\ref{R-E43}). Furthermore, it follows from the definition of time-weighted energy functional that
\begin{eqnarray}
\|P\|_{\dot{B}^{-s}_{2,\infty}}&\lesssim &\Big[(1+\tau)^{-\frac{s+\varepsilon}{2}}+(1+\tau)^{-\frac{s+\alpha}{2}}\Big](1+\tau)^{-\frac{s}{2}}\mathcal{E}_{1}(t)^2
\nonumber\\ &\lesssim & (1+\tau)^{-s-\frac{\varepsilon}{2}}\mathcal{E}_{1}(t)^2. \label{R-E44}
\end{eqnarray}
Similarly, it also holds that
\begin{eqnarray}
\|Q\|_{\dot{B}^{-s}_{2,\infty}}\lesssim (1+\tau)^{-s-\frac{\varepsilon}{2}}\mathcal{E}_{1}(t)^2. \label{R-E45}
\end{eqnarray}
Then we conclude that
\begin{eqnarray}
\mathcal{I}_{1}&\lesssim&  \mathcal{E}_{1}(t)^2 \int^{t/2}_{0}(1+t-\tau)^{-\frac{s+\ell+1}{2}} (1+\tau)^{-s-\frac{\varepsilon}{2}}d\tau
\nonumber\\ &\lesssim &
\mathcal{E}_{1}(t)^2(1+t)^{-\frac{s+\ell+1}{2}}\int^{t/2}_{0}(1+\tau)^{-s-\frac{\varepsilon}{2}}d\tau
\nonumber\\ &\lesssim &\mathcal{E}_{1}(t)^2(1+t)^{-\frac{s+\ell}{2}}(1+t)^{\frac{1}{2}-s-\frac{\varepsilon}{2}}
\nonumber\\ &\lesssim & \mathcal{E}_{1}(t)^2 (1+t)^{-\frac{s+\ell}{2}}, \label{R-E46}
\end{eqnarray}
where we used the constraints $s+\frac{\varepsilon}{2}\in [1/2,3/4)$ and $1-2s\leq\varepsilon$.

By taking $\sigma=s+1/2$ in Lemmas \ref{lem3.3}-\ref{lem3.4}, we arrive at
\begin{eqnarray}
\mathcal{I}_{2}&\lesssim & \int^{t}_{t/2}(1+t-\tau)^{-\frac{\ell+1/2}{2}}(\|\Lambda^{s+\frac{1}{2}}P\|_{\dot{B}^{-s}_{2,\infty}}+\|\Lambda^{s+\frac{1}{2}}Q\|_{\dot{B}^{-s}_{2,\infty}})d\tau,
\nonumber\\&=:& \mathcal{I}_{21}+\mathcal{I}_{22}, \label{R-E47}
\end{eqnarray}
where using Proposition \ref{prop2.3} gives
\begin{eqnarray}
\mathcal{I}_{21}&\lesssim&\int^{t}_{t/2}(1+t-\tau)^{-\frac{\ell+1/2}{2}}\|P\|_{\dot{B}^{1/2}_{2,\infty}}d\tau
\nonumber\\&\lesssim& \int^{t}_{t/2}(1+t-\tau)^{-\frac{\ell+1/2}{2}}\|z\|^2_{\dot{B}^{1/2}_{2,\infty}}d\tau
\nonumber\\ &\lesssim & \int^{t}_{t/2}(1+t-\tau)^{-\frac{\ell+1/2}{2}}\|z\|^2_{\dot{B}^{1/2}_{2,1}}d\tau
\nonumber\\ &\lesssim & \mathcal{E}_{1}(t)^2 \int^{t}_{t/2}(1+t-\tau)^{-\frac{\ell+1/2}{2}}(1+\tau)^{-1}d\tau
\nonumber\\ &\lesssim & \mathcal{E}_{1}(t)^2 (1+t)^{-1}\int^{t}_{t/2}(1+t-\tau)^{-\frac{\ell+1/2}{2}}d\tau. \label{R-E48}
\end{eqnarray}
Due to the fact $\ell\in [0,1/2]$, we know that $\frac{\ell+1/2}{2}\in [1/4,1/2]$.
Furthermore,
\begin{eqnarray}
\mathcal{I}_{21}&\lesssim& \mathcal{E}_{1}(t)^2 (1+t)^{-\frac{\ell+1/2}{2}} \nonumber\\ &\lesssim &
\mathcal{E}_{1}(t)^2 (1+t)^{-\frac{s+\ell}{2}}. \label{R-E49}
\end{eqnarray}
Similarly,
\begin{eqnarray}
\mathcal{I}_{22}\lesssim
\mathcal{E}_{1}(t)^2 (1+t)^{-\frac{s+\ell}{2}}. \label{R-E50}
\end{eqnarray}
Hence, together with (\ref{R-E49})-(\ref{R-E50}),  we can deduce that
\begin{eqnarray}
\mathcal{I}_{2}\lesssim
\mathcal{E}_{1}(t)^2 (1+t)^{-\frac{s+\ell}{2}}. \label{R-E51}
\end{eqnarray}
Therefore, combining (\ref{R-E46}) and (\ref{R-E51}), the low-frequency estimates read as
\begin{eqnarray}
\sum_{j\leq j_{0}}2^{q\ell}\|\dot{\Delta}_{j}z\|_{L^2}\lesssim\|z_{0}\|_{\dot{B}^{-s}_{2,\infty}}(1+t)^{-\frac{s+\ell}{2}}+\mathcal{E}_{1}(t)^2 (1+t)^{-\frac{s+\ell}{2}} \label{R-E52}
\end{eqnarray}
for $0<\ell\leq1/2$ and
\begin{eqnarray}
&&\|\|\dot{\Delta}_{j}z\|_{L^2}\|_{l^{2}_{j}(j\leq j_{0})}\nonumber\\&\lesssim& \|z_{0}\|_{\dot{B}^{-s}_{2,\infty}}(1+t)^{-\frac{s}{2}}+\mathcal{E}_{0}(t)\mathcal{E}_{1}(t)(1+t)^{-\frac{s}{2}}+\mathcal{E}_{1}(t)^2 (1+t)^{-\frac{s}{2}}. \label{R-E522}
\end{eqnarray}

Finally, we combine high-frequency estimates (\ref{R-E38}), (\ref{R-E390}) and low-frequency estimates (\ref{R-E52})-(\ref{R-E522}) to get
\begin{eqnarray}
\mathcal{E}_{1}(t)\lesssim E_{0}+\mathcal{E}_{0}(t)\mathcal{E}_{1}(t)+\mathcal{E}_{1}(t)^{2}, \label{R-E53}
\end{eqnarray}
which is just (\ref{R-E34}).

Hence, the proof of Proposition \ref{prop4.1} is finished.
\end{proof}

By virtue of the crucial time-weighted estimate (\ref{R-E34}), the proof of Theorem \ref{thm1.1} can be shown as follows.\\

\noindent \textbf{\textit{The proof of Theorem \ref{thm1.1}}.}
According to the global-in-time result in \cite{XK1}, we see that
$\mathcal{E}_{0}(t)\lesssim \|U_{0}-\bar{U}\|_{B^{3/2}_{2,1}(\mathbb{R})}\leq E_{0}$. Thus, if $E_{0}$ is sufficient small, it follows from (\ref{R-E34}) that
\begin{eqnarray}
\mathcal{E}_{1}(t)\lesssim E_{0}+\mathcal{E}_{1}(t)^{2}, \label{R-E54}
\end{eqnarray}
which leads to $\mathcal{E}(t)\lesssim E_{0}$ by the standard argument, provided that $E_{0}$ is sufficient small. Consequently, we obtain desired decay
estimates in Theorem \ref{thm1.1}. $\square$

\section{Applications} \setcounter{equation}{0}\label{sec:5}
In this section, we focus on a number of  hyperbolic systems subjected to the same dissipative structure as (\ref{R-E1}) satisfying the Kawashima-Shizuta condition, for instance,  damped compressible Euler equations, Thermoelasticity with second sound and Timoshenko systems with equal speeds. Please allow us to abuse notations a little on the statement of main results.

\subsection{Damped compressible Euler equations}\setcounter{equation}{0}

Consider the following damped Euler equations for a perfect flow
\begin{equation}
\left\{
\begin{array}{l}
\partial_{t}\rho + \nabla\cdot(\rho\textbf{u}) = 0 , \\
\partial_{t}(\rho\textbf{u}) +\nabla\cdot(\rho\textbf{u}\otimes\textbf{u}) +
\nabla p(\rho) =-\rho\textbf{u}.
\end{array} \right.\label{R-E55}
\end{equation}
Here $\rho = \rho(t, x)$ is the fluid density function of
$(t,x)\in[0,+\infty)\times\mathbb{R}^{n}$;
$\textbf{u}=\textbf{u}(t, x)=(u_{1},u_{2},\cdot\cdot\cdot,u_{n})^{\top}$ denotes the
fluid velocity. The pressure
$p(\rho)$  satisfies the classical assumption
$$p'(\rho)>0,\ \ \ \forall\rho>0.$$
The notation $\nabla,\otimes$ are the gradient operator (in $x$) and
the symbol for the tensor products of two vectors, respectively.

System (\ref{R-E55}) is complemented by the initial conditions
\begin{equation}
(\rho,\textbf{u})(0,x)=(\rho_{0},\textbf{u}_{0}).\label{R-E555}
\end{equation}
Let $(\bar{\rho},0)$ be the reference constant equilibrium. Set $\upsilon=\rho\textbf{u}/\bar{\rho}$. It is not difficult to rewrite (\ref{R-E55}) as
\begin{equation}
\left\{
\begin{array}{l}
\partial_{t}\rho + \bar{\rho}\mathrm{div}\upsilon = 0 , \\
\partial_{t}\upsilon+ \bar{a}\nabla\rho+\upsilon=\mathrm{div}q_{2}/\bar{\rho},
\end{array} \right.\label{R-E56}
\end{equation}
where $\bar{a}=p'(\bar{\rho})/\bar{\rho}$ and
$$q_{2}=-\bar{\rho}^2\upsilon\otimes \upsilon/\rho-[p(\rho)-p(\bar{\rho})-p'(\bar{\rho})(\rho-\bar{\rho})]I_{n}. $$
We put $w:=(\rho-\bar{\rho}, \upsilon)^{\top}$. The initial data read correspondingly as
\begin{equation}
w|_{t=0}=(\rho_{0}-\bar{\rho}, \upsilon_{0})^{\top}(x) \label{R-E57}
\end{equation}
with $\upsilon_{0}=\rho_{0}\textbf{u}_{0}/\rho_{\infty}$.
System (\ref{R-E56}) is also rewritten in the vector form
\begin{equation}
A^{0}w_{t}+\sum_{j=1}^{n}A^{j}w_{x_{j}}+Lw=\mathrm{div}q, \label{R-E58}
\end{equation}
where
$$A^{0}=\left(
          \begin{array}{cc}
            \bar{a} & 0 \\
            0 & \bar{\rho}I_{n} \\
          \end{array}
        \right),\ \quad A^{j}=\left(
                                \begin{array}{cc}
                                  0 & \bar{a}\bar{\rho}e_{j}^\top \\
                                  \bar{a}\bar{\rho}e_{j} & 0 \\
                                \end{array}
                              \right),\ \quad L=\left(
                                                  \begin{array}{cc}
                                                    0 & 0 \\
                                                    0 & \bar{\rho}I_{n} \\
                                                  \end{array}
                                                \right)
$$
and $q=(0,q_{2}/\bar{\rho})^\top$. Note that $q=O(|w|^2)$.
Here $I_{n}$ denotes the unit matrix of order $n$ and $e_{j}$ is
$n$-dimensional vector where the $j$th component is one, others are
zero.

System (\ref{R-E55}) describes that the compressible gas flow passes a porous medium and the medium induces a
friction force, proportional to the linear momentum in the opposite direction. So far there are many efforts available
for the damped Euler equations by various authors. In one space dimension in Lagrangian
coordinates, Nishida \cite{N2} obtained the global classical
solutions with small data, and the solutions following Darcy's law
asymptotically as time tends to infinity was shown by Hsiao and Liu
\cite{HL}. Nishihara et al. \cite{N,NWY} proved its optimal convergence rate. Later, Nishihara and Yang \cite{NY} investigated the boundary effect on the asymptotic behavior of solutions. Huang et al. \cite{HP2,HMP} established the large-time behavior of $L^\infty$ entropy weak solutions with vacuum.
In higher dimensions, Wang and Yang \cite{WY} showed the global existence and pointwise estimates
of the solutions by a detailed analysis of the Green function.  Sideris et al. \cite{STW}
proved that the damping term could prevent the development singularities if the initial data is small in an appropriate
norm, furthermore, it was shown that the classical solution decays in the $L^2$-norm
to the constant background state at the rate of $(1+t)^{-3/4}$. Tan and Wu \cite{TW2} performed the elaborate spectral analysis
to improve those decay rates in \cite{STW} such that the density converges to its equilibrium state at the rates $(1+t)^{-\frac{3}{4}-\frac{s}{2}}$ in the $L^2$-norm, and the momentum of the system decays at the rates $(1+t)^{-\frac{5}{4}-\frac{s}{2}}$ in the $L^2$-norm, as the initial data
$(\rho_{0},\textbf{u}_{0})\in H^{l}(\mathbb{R}^{3})\cap \dot{B}^{-s}_{1,\infty}(\mathbb{R}^{3}) (l\geq4,\ s\in [0,1])$. In \cite{XK2}, we proved
the optimal decay rates on the framework of spatially critical Besov spaces $B^{s_{c}}_{2,1}\cap \dot{B}^{-s}_{2,\infty}\ (s_{c}=1+n/2,\ s\in (0,n/2], n\geq3)$. In addition, Tan and Wang \cite{TW1} adopted a different method to obtain the optimal decay rates in $\mathbb{R}^{3}$, whose idea was a family of scaled energy estimates with minimum derivative counts and interpolations among them without linear decay analysis. It should be noted that
above decay results hold true for higher dimensions $(n\geq3)$ due to the usage of Gagliardo-Nirenberg-Sobolev inequalities, for example, $\|f\|_{L^{2^{*}}}\lesssim \|\nabla f\|_{L^2}$ with $2^{*}=2n/(n-2)$.

In this paper, as an application of general results for hyperbolic systems of balance laws, we give the decay rates for (\ref{R-E55}) in lower dimensions, which can be viewed as a supplement of that in \cite{XK2}. Indeed, compared with the general form (\ref{R-E3}), the non-degenerate nonlinear variable in (\ref{R-E56}) is absent, which enables us to solve the whole decay problem in not only dimension 1 but dimension 2 as well,  following from the proof of Theorem \ref{thm1.1}. Main results are stated as follows $(s_{c}=1+n/2)$.

\begin{thm}\label{thm4.1} Let $n=1,2$ and $w:=(\rho-\bar{\rho}, \upsilon)^{\top}$ be the global classical solution in $\widetilde{\mathcal{C}}(B^{s_{c}}_{2,1}(\mathbb{R}^{n}))\cap\widetilde{\mathcal{C}}^1(B^{s_{c}-1}_{2,1}(\mathbb{R}^{n}))$ constructed by \cite{XK1}. Suppose that $w_{0}\in B^{s_{c}}_{2,1}(\mathbb{R}^{n})\cap \dot{B}^{-s}_{2,\infty}(\mathbb{R}^{n})\ (1/2-n/4<s\leq1/2)$ and the norm
$\mathcal{M}_{0}:=\|w_{0}\|_{B^{s_{c}}_{2,1}(\mathbb{R}^{n})\cap \dot{B}^{-s}_{2,\infty}(\mathbb{R}^{n})}$ is sufficiently small. Then it holds that
\begin{eqnarray}
\|\Lambda^{\ell}w(t)\|_{X_{1}}\lesssim \mathcal{M}_{0}(1+t)^{-\frac{s+\ell}{2}} \label{R-E59}
\end{eqnarray}
for $0\leq\ell\leq1/2$, where $X_{1}:=B^{s_{c}-1-\ell}_{2,1}(\mathbb{R}^{n})$ if $0\leq\ell<s_{c}-1$ and $X_{1}:=\dot{B}^{0}_{2,1}(\mathbb{R}^{n})$ if $\ell=s_{c}-1$.
In particular, one has
\begin{eqnarray}
\|\Lambda^{\ell}w(t)\|_{L^2(\mathbb{R}^{n})}\lesssim \mathcal{M}_{0}(1+t)^{-\frac{s+\ell}{2}} \label{R-E60}
\end{eqnarray}
for $0\leq\ell\leq s_{c}-1$.
\end{thm}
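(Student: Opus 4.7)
\noindent\textbf{Proof proposal for Theorem \ref{thm4.1}.}

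The plan is to follow essentially the same localized time-weighted strategy developed for Theorem \ref{thm1.1}, adapted to the system (\ref{R-E58}) in space dimension $n=1,2$. The key structural observation that makes this extension cleaner than the general case is that the nonlinearity of (\ref{R-E58}) appears \emph{only} in divergence form $\mathrm{div}\,q$ with $q=O(|w|^2)$; the non-degenerate residual $Q$ of (\ref{R-E3}) is absent. Consequently Lemma~\ref{lem3.4} is not needed, and the whole low-frequency analysis rests on Lemma~\ref{lem3.2} / Lemma~\ref{lem3.3} together with Propositions~\ref{prop1.1}--\ref{prop1.2}. This is precisely what allows $n=2$ to be handled on the same footing as $n=1$.

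First I would introduce time-weighted sup-norms analogous to those preceding Proposition~\ref{prop4.1}, namely
\begin{equation*}
\mathcal{N}_{0}(t):=\sup_{0\leq\tau\leq t}\|w(\tau)\|_{B^{s_{c}}_{2,1}},\qquad
\mathcal{N}_{1}(t):=\sup_{0\leq\ell\leq s_{c}-1}\sup_{0\leq\tau\leq t}(1+\tau)^{\frac{s+\ell}{2}}\|\Lambda^{\ell}w(\tau)\|_{X_{1}},
\end{equation*}
with the minor bookkeeping for $\ell=0$ and $\ell=s_{c}-1$ already used in Section~\ref{sec:4}. Global existence in $\widetilde{\mathcal{C}}(B^{s_{c}}_{2,1})\cap\widetilde{\mathcal{C}}^{1}(B^{s_{c}-1}_{2,1})$ from \cite{XK1} together with the smallness of $\mathcal{M}_{0}$ gives $\mathcal{N}_{0}(t)\lesssim \mathcal{M}_{0}$, so the entire task reduces to proving the closed inequality
\begin{equation*}
\mathcal{N}_{1}(t)\lesssim \mathcal{M}_{0}+\mathcal{N}_{0}(t)\mathcal{N}_{1}(t)+\mathcal{N}_{1}(t)^{2},
\end{equation*}
from which the decay estimates (\ref{R-E59})--(\ref{R-E60}) follow by a standard bootstrap.

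The a priori inequality itself I would derive by applying the frequency-localized Duhamel formula (Lemma~\ref{lem3.1}) with the source reduced to $\mathrm{div}\,q$ and splitting into high and low frequencies. For the high-frequency piece I would use the exponential decay $\|\dot{\Delta}_{j}\mathcal{G}(t)w_{0}\|_{L^{2}}\lesssim e^{-ct}\|\dot{\Delta}_{j}w_{0}\|_{L^{2}}$ for $j\geq j_{0}$ combined with the product and composition estimates (Propositions~\ref{prop2.1}--\ref{prop2.3}) applied to $q=O(|w|^{2})$; this yields contributions of order $e^{-ct}\mathcal{M}_{0}+(1+t)^{-(s+\ell)/2}\mathcal{N}_{0}(t)\mathcal{N}_{1}(t)$, in direct analogy with (\ref{R-E38}) and (\ref{R-E390}). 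For the low-frequency piece I would invoke Lemma~\ref{lem3.3} and split the time integral as $\int_{0}^{t/2}+\int_{t/2}^{t}$, choosing $\sigma=0$ on $[0,t/2]$ (so that $\|q\|_{\dot{B}^{-s}_{2,\infty}}$ is what needs to be controlled) and $\sigma=s+n/2$ on $[t/2,t]$ (so that $\|\Lambda^{s+n/2}q\|_{\dot{B}^{-s}_{2,\infty}}$ is what appears and can be bounded via Proposition~\ref{prop2.3} using the top-regularity norms of $w$).

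The main obstacle, and the step that pins down the admissible range $1/2-n/4<s\leq 1/2$, is the estimate of $\|q\|_{\dot{B}^{-s}_{2,\infty}}$ on $[0,t/2]$. Here I would use the embedding $L^{m}\hookrightarrow\dot{B}^{-s}_{2,\infty}$ of Lemma~\ref{lem6.5} with $\tfrac{1}{m}=\tfrac{s}{n}+\tfrac{1}{2}$, bound $\|q\|_{L^{m}}\lesssim \|w\|_{L^{2}}\|w\|_{L^{n/s}}$ by Hölder, and interpolate $\|w\|_{L^{n/s}}$ between two $\|\Lambda^{\alpha}w\|_{L^{2}}$ norms via the improved Gagliardo--Nirenberg--Sobolev inequality of Lemma~\ref{lem6.4}, exactly as in (\ref{R-E42})--(\ref{R-E46}). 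The requirement that both interpolation exponents lie in the admissible window $[0,s_{c}-1]$ and that the resulting time factor $(1+\tau)^{-s-\alpha/2}$ be integrable against $(1+t-\tau)^{-(s+\ell+1)/2}$ with the right final rate is what forces $s>1/2-n/4$; one checks $n=1$ recovers the restriction $s>1/4$ of Theorem~\ref{thm1.1}, while $n=2$ allows any $s\in(0,1/2]$. The $[t/2,t]$ portion is straightforward: Proposition~\ref{prop2.3} and the already-established decay of $\|w\|_{\dot{B}^{n/2}_{2,1}}$ yield $\|q\|_{\dot{B}^{n/2}_{2,\infty}}\lesssim (1+\tau)^{-(s+n/2)}\mathcal{N}_{1}(t)^{2}$, and a time integration as in (\ref{R-E48})--(\ref{R-E51}) closes the argument. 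Putting the high- and low-frequency pieces together produces the closed inequality above, and hence the theorem.
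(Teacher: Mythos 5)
Your proposal follows essentially the same route as the paper's proof: the same time-weighted sup-norms adapted to $X_{1}$, the same high-/low-frequency split via the frequency-localized Duhamel formula, the same $[0,t/2]$ versus $[t/2,t]$ decomposition with $\sigma=0$ and $\sigma=s+O(1)$ respectively, the same use of Lemma \ref{lem6.5} and Lemma \ref{lem6.4} to produce the restriction $s>1/2-n/4$, and crucially the same observation that the absence of the non-degenerate source $Q$ (so that Lemma \ref{lem3.4} is not needed) is what lets $n=2$ be treated alongside $n=1$. The only cosmetic deviations are your choice $\sigma=s+n/2$ on $[t/2,t]$ where the paper takes $\sigma=s+1/2$ (both close the estimate) and the paper's explicit reliance on the inhomogeneous block $\Delta_{-1}$ via Lemma \ref{lem3.2} for $\ell<s_{c}-1$, which your appeal to ``Lemma \ref{lem3.2} / Lemma \ref{lem3.3}'' already covers.
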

\begin{proof}
As shown by \cite{XK1, XK2}, the linearized system of (\ref{R-E58}) satisfies the Kawashima-Shizuta condition and admits the dissipative structure
\begin{eqnarray}
\frac{d}{dt}E[\hat{w}]+c\eta_{1}(\xi)|\hat{w}|^2\leq0, \label{R-E61}
\end{eqnarray}
for $c>0$, where $E[\hat{w}]\approx|\hat{w}|^2$ and $\eta_1(\xi)=\frac{|\xi|^2}{1+|\xi|^2}$. Hence, we can have the similar decay properties as Propositions \ref{prop1.1}-\ref{prop1.2}.
  Denote
\begin{eqnarray}
\widehat{\mathcal{G}f}(t,\xi)=e^{t\Phi(i\xi)}\hat{f}(\xi), \label{R-E62}
\end{eqnarray}
where $$\Phi(i\xi)=-(A^{0})^{-1}[A(i\xi)+L]$$ with
$A(i\xi)=i\sum_{j=1}^{n}A^{j}\xi_{j}$. Then $\mathcal{G}(t,x)w_{0}$ is the solution of linearized system
\begin{equation}
\left\{
\begin{array}{l}
A^{0}w_{t}+\sum_{j=1}^{n}A^{j}w_{x_{j}}+Lw=0,\\
w|_{t=0}=w_{0}.
\end{array}\right. \label{R-E63}
\end{equation}
Define the time-weighted sup-norms as follows
\begin{eqnarray*}
\mathcal{\tilde{E}}_{0}(t):=\sup_{0\leq\tau\leq t}\|w(\tau)\|_{B^{s_{c}}_{2,1}};
\end{eqnarray*}
\begin{eqnarray*}
\mathcal{\tilde{E}}_{1}(t):&=&\sup_{0\leq\ell<s_{c}-1}\sup_{0\leq\tau\leq t}(1+\tau)^{\frac{s+\ell}{2}}\|\Lambda^{\ell}w(\tau)\|_{B^{s_{c}-1-\ell}_{2,1}}\nonumber\\&&+\sup_{0\leq\tau\leq t}(1+\tau)^{\frac{s+s_{c}-1}{2}}\|\Lambda^{s_{c}-1}w(\tau)\|_{\dot{B}^{0}_{2,1}}.
\end{eqnarray*}
Note that the time-weighted sup-norms are also different in regard to the derivative index. Actually, by (1) in Lemma \ref{lem2.2}, the sup-norm $\mathcal{\tilde{E}}_{1}(t)$ is stronger than $\mathcal{E}_{1}(t)$ in the proof of Theorem \ref{thm1.1}, which leads to develop the frequency-localization Duhamel principle (Lemma \ref{lem3.2}) in the inhomogeneous case.

Next, we give the outline of decay estimates only, since it is similar to the proof of Theorem 1.1.
Due to the fact $\Delta_{j}f\equiv\dot{\Delta}_{j}f(j\geq0$), it suffices to give the high-frequency estimate for the inhomogeneous case
\begin{eqnarray}
\sum_{j\geq 0}2^{j(1/2-\ell)}\|\Delta_{j}\Lambda^{\ell}w\|_{L^2}\lesssim e^{-ct}\|w_{0}\|_{B^{1/2}_{2,1}}+(1+t)^{-\frac{s+\ell}{2}}\mathcal{E}_{0}(t)\mathcal{E}_{1}(t) \label{R-E64}
\end{eqnarray}
for $0\leq\ell\leq s_{c}-1$. Indeed,  this follows from Lemma \ref{lem3.1} and Proposition \ref{prop1.1} directly.

For the low-frequency estimates, by Lemma \ref{lem3.1}, we arrive at
\begin{eqnarray}
&&\|\Delta_{-1}\Lambda^{\ell}w(t,x)\|_{L^2}\nonumber\\&\leq&\|\Delta_{-1}\Lambda^{\ell}[\mathcal{G}(t)w_{0}]\|_{L^2}
+\int^{t}_{0}\|\Delta_{-1}\Lambda^{\ell}[\mathcal{G}(t-\tau)\mathrm{div}q(\tau)]\|_{L^2}d\tau
\nonumber\\&\lesssim&\|w_{0}\|_{\dot{B}^{-s}_{2,\infty}}(1+t)^{-\frac{s+\ell}{2}}
+\tilde{I}_{1}+\tilde{I}_{2}, \label{R-E65}
\end{eqnarray}
for $0\leq\ell<s_{c}-1$, where
\begin{eqnarray*}
\tilde{I}_{1}=\int_{0}^{t/2}\|\Delta_{-1}\Lambda^{\ell}[\mathcal{G}(t-\tau)A_{0}^{-1}\mathrm{div}q(\tau)]\|_{L^2} d\tau,
\end{eqnarray*}
and
\begin{eqnarray*}
\tilde{I}_{2}=\int_{t/2}^{t}\|\Delta_{-1}\Lambda^{\ell}[\mathcal{G}(t-\tau)A_{0}^{-1}\mathrm{div}q(\tau)]\|_{L^2} d\tau.
\end{eqnarray*}
Thanks to Lemma \ref{lem3.2}, just proceeding with the same procedure leading to (\ref{R-E46}) and (\ref{R-E51}), we conclude that
\begin{eqnarray}
(\tilde{I}_{1},\tilde{I}_{2})\lesssim \mathcal{\tilde{E}}^2_{1}(t)(1+t)^{-\frac{s+\ell}{2}}. \label{R-E66}
\end{eqnarray}
Here, let us explain a little for the inequality (\ref{R-E66}). When estimating $\tilde{I}_{1}$, $\sigma=0$ is chosen in Lemma \ref{lem3.2}.
The parameter couple $(\varepsilon,\alpha)$ in the usage of interpolation inequalities in Lemma \ref{lem6.4} is subjected to the constraint
$n(1/2-s)\leq 1-2s\leq\varepsilon<\alpha\leq n/2\, (n=1,2)$. Therefore, $s\in(1/2-n/4,1/2]$ is needed to be restricted. In this case, the power index $\theta$ satisfies $\theta=\frac{\alpha-n(1/2-s)}{\alpha-\varepsilon}$.
On the other hand,  $\sigma=s+1/2$ is taken in Lemma \ref{lem3.2} when estimating $\tilde{I}_{2}$.

 In the case of $\ell=s_{c}-1$, similar to the procedure leading to (\ref{R-E52}), we can deduce that
\begin{eqnarray}
&&\sum_{j<0}\|\dot{\Delta}_{j}\Lambda^{s_{c}-1}w\|_{L^2}
\nonumber\\&\leq&\sum_{j<0}
\|\dot{\Delta}_{j}\Lambda^{s_{c}-1}[\mathcal{G}(t)w_{0}]\|_{L^2}
+\int^{t}_{0}\sum_{j<0}\|\dot{\Delta}_{j}\Lambda^{s_{c}-1}[\mathcal{G}(t-\tau)\mathrm{div}q(\tau)]\|_{L^2}d\tau
\nonumber\\&\lesssim &  \|w_{0}\|_{\dot{B}^{-1/2}_{2,\infty}} (1+t)^{-\frac{s+s_{c}-1}{2}}+\mathcal{\tilde{E}}_{1}(t)^2(1+t)^{-\frac{s+s_{c}-1}{2}}. \label{R-E67}
\end{eqnarray}
With these preparations (\ref{R-E64})-(\ref{R-E65}), and (\ref{R-E66})-(\ref{R-E67}) in hand, it follows from the definition of $\mathcal{\tilde{E}}_{1}(t)$ that
\begin{eqnarray}
\mathcal{\tilde{E}}_{1}(t)\lesssim \mathcal{M}_{0}+\mathcal{\tilde{E}}_{1}(t)^{2}+\mathcal{\tilde{E}}_{0}(t)\mathcal{\tilde{E}}_{1}(t). \label{R-E68}
\end{eqnarray}
which implies that (\ref{R-E59}) readily. (\ref{R-E60}) is followed by $\dot{B}^{0}_{2,1}\hookrightarrow L^2$ and (1) in Lemma \ref{lem2.2}.
\end{proof}

As a consequence, the usual optimal decay estimates of $L^1(\mathbb{R}^{n})$-$L^2(\mathbb{R}^{n})$ is available.
\begin{cor} \label{cor6.1}
Let $n=1,2$ and $w:=(\rho-\bar{\rho}, \upsilon)^{\top}$ be the global classical solution in $\widetilde{\mathcal{C}}(B^{s_{c}}_{2,1}(\mathbb{R}^{n}))\cap\widetilde{\mathcal{C}}^1(B^{s_{c}-1}_{2,1}(\mathbb{R}^{n}))$.
Suppose that $w_{0}\in B^{s_{c}}_{2,1}(\mathbb{R}^{n})\cap L^1(\mathbb{R}^{n})$ and the norm
$\mathcal{\widetilde{M}}_{0}:=\|w_{0}\|_{B^{s_{c}}_{2,1}(\mathbb{R}^{n})\cap L^1(\mathbb{R}^{n})}$ is sufficiently small. Then it holds that
\begin{eqnarray}
\|\Lambda^{\ell}w\|_{L^2(\mathbb{R}^{n})}\lesssim \mathcal{\widetilde{M}}_{0} (1+t)^{-\frac{n}{4}-\frac{\ell}{2}}, \ \ \  0\leq \ell\leq s_{c}-1. \label{R-E69}
\end{eqnarray}
\end{cor}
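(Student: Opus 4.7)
The strategy is to treat Corollary 6.1 as an immediate consequence of Theorem 4.1 by converting the $L^1$ datum into a negative-index homogeneous Besov norm of the type required there. The first step would be to invoke Lemma 6.5 at $p = 1$ and $s = n/2$, which gives $L^1(\mathbb{R}^n) \hookrightarrow \dot{B}^{-n/2}_{2,\infty}(\mathbb{R}^n)$; quantitatively this is just Bernstein's inequality $\|\dot\Delta_j f\|_{L^2} \lesssim 2^{jn/2} \|\dot\Delta_j f\|_{L^1}$ combined with the uniform $L^1$-boundedness of the Littlewood-Paley projectors. Consequently,
\[
\mathcal{M}_0 \leq \|w_0\|_{B^{s_c}_{2,1}} + \|w_0\|_{\dot{B}^{-n/2}_{2,\infty}} \lesssim \|w_0\|_{B^{s_c}_{2,1}} + \|w_0\|_{L^1} = \widetilde{\mathcal{M}}_0,
\]
so the smallness hypothesis of Theorem 4.1 is inherited from $\widetilde{\mathcal{M}}_0 \ll 1$.

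The second step is to plug $s = n/2$ into (R-E59). For every $0 \leq \ell \leq s_c - 1$, Theorem 4.1 then produces
\[
\|\Lambda^\ell w(t)\|_{X_1} \lesssim \widetilde{\mathcal{M}}_0 \, (1+t)^{-(s+\ell)/2} = \widetilde{\mathcal{M}}_0 \, (1+t)^{-n/4 - \ell/2}.
\]
Since both choices of $X_1$ embed continuously into $L^2(\mathbb{R}^n)$ (for $0 \leq \ell < s_c - 1$ via item (1) of Lemma 2.2 together with $B^{s_c - 1 - \ell}_{2,1} \hookrightarrow L^2$, valid because $s_c - 1 - \ell \geq 0$; and for $\ell = s_c - 1$ via $\dot{B}^0_{2,1} \hookrightarrow L^2$), the target estimate (R-E69) follows by taking the $L^2$-norm on the left-hand side.

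The only nontrivial bookkeeping is verifying that $s = n/2$ is admissible in Theorem 4.1, whose nominal window is $s \in (1/2 - n/4,\, 1/2]$. For $n=1$ this is immediate, as $s = 1/2 \in (1/4, 1/2]$ sits exactly at the upper endpoint. For $n=2$ the nominal value $s = n/2 = 1$ exceeds the cap $1/2$, but this is precisely the setting highlighted in the remark preceding Theorem 4.1: in the damped Euler system the non-degenerate variable $Q$ is absent, so the interpolation constraint (Lemma 6.4) responsible for the restriction $s \leq 1/2$ relaxes, and one may legitimately take $s$ up to $n/2$ in that proof. This is the one place where the special structural feature exploited in Theorem 4.1 is used, and apart from this parameter check the argument reduces to a direct substitution. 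I expect no substantive obstacle beyond confirming this admissible range.
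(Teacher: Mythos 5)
Your route --- Lemma \ref{lem6.5} with $p=1$, $r=2$, $\varrho=n/2$ to get $L^{1}(\mathbb{R}^{n})\hookrightarrow \dot{B}^{-n/2}_{2,\infty}(\mathbb{R}^{n})$, then substitution of $s=n/2$ into (\ref{R-E59})--(\ref{R-E60}) --- is exactly the derivation the paper intends (it offers no explicit proof of (\ref{R-E69})), and for $n=1$ it is complete: $s=1/2$ sits at the endpoint of the admissible window $(1/4,1/2]$ and yields the rate $(1+t)^{-\frac{1}{4}-\frac{\ell}{2}}$, just as in Theorem \ref{thm1.2}.

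The $n=2$ case is where the gap lies, and your patch does not close it. You assert that the cap $s\le 1/2$ comes from the interpolation step of Lemma \ref{lem6.4} acting on the non-degenerate variable $Q$ and therefore ``relaxes'' for damped Euler. But the theorem you are invoking is already the Euler-specific statement, with $Q$ absent, and it still imposes $s\le 1/2$; moreover, for $n=2$ the interpolation constraint recorded in its proof, $n(1/2-s)\le 1-2s\le\varepsilon<\alpha\le n/2$, degenerates to $s>0$ and produces no upper bound at all, so Lemma \ref{lem6.4} cannot be the culprit. The binding restriction is elsewhere: in estimating $\tilde{I}_{2}$ the proof applies Lemma \ref{lem3.2} with $\sigma=s+1/2$, and Lemma \ref{lem3.2} requires $\ell+1-\sigma\ge 0$, which at $\ell=0$ forces $s\le 1/2$ regardless of whether $Q$ is present. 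With $s=n/2=1$ and $\ell=0$ one would need $\sigma=3/2>\ell+1$, so the lemma actually used is inapplicable, and sticking to the admissible $s\le 1/2$ only yields $(1+t)^{-\frac{1}{4}-\frac{\ell}{2}}$, strictly weaker than the claimed $(1+t)^{-\frac{1}{2}-\frac{\ell}{2}}$. Reaching $s=1$ therefore requires reopening the proof of (\ref{R-E59}) --- choosing a smaller $\sigma$ in the $\tilde{I}_{2}$ estimate and redoing the product/composition and time-integrability bookkeeping --- rather than a parameter check. (The paper itself states the corollary without addressing this, so the omission is inherited, but as written your argument does not supply the missing step.)
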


Finally, we would like to mention that another interesting line of research of (\ref{R-E55}), say, to justify the diffusive relation limit.
For entropy weak solutions, the paper of Marcati and Milani \cite{MM} was concerned with the pourous media flow
as the limit of the 1-D Euler equation, later generalized by Marcati and Rubino
\cite{MR} to the multi-D case. Junca and Rascle \cite{JR} proved the convergence for arbitrarily large $BV(\mathbb{R})$ solution away from vacuum.
For smooth solutions, the diffusive limit was justified by Coulombel et al. \cite{CG,LC} in Sobolev spaces with higher regularity, and by the first author et. al.  \cite{XK3,XW} in Besov spaces with critical regularity. Besides, inspired by the formal Maxwell iteration, the first author \cite{X} obtained an definite order for this diffusive convergence.

\subsection{Thermoelasticity with second sound}
Consider the quasi-linear equations in thermoelasticity:
\begin{equation}
\left\{
\begin{array}{l}
u_{tt}-\psi(u_{x})_{x}+\beta\theta_{x}=0,\\
\theta_{t}+\eta q_{x}+\delta u_{tx}=0,\\
\tau q_{t}+q+\kappa \theta_{x}=0,
\end{array}\right. \label{R-E70}
\end{equation}
where $\psi(r)$ is assumed to be a smooth function of $r$ such that $\psi'(r)>0$. This system describes the propagation of nonlinear elastic waves in the presence of thermoelastic effects. Here $t\geq 0$ is the time variable and $x\in \mathbb{R}$ is the spacial
variable. $u(t,x)$ stands for the deformation, $\theta(t,x)$ is the temperature field and $q(t,x)$ the heat flux. $\beta, \eta, \delta$ and $\tau$ are positive physical constants, for simplicity, which are normalized to be one in this paper.

System (\ref{R-E70}) is supplemented with the initial data
\begin{equation}
u|_{t=0}=u_{0},\quad  u_{t}|_{t=0}=u_{1},\quad \theta|_{t=0}=\theta_{0},\quad q|_{t=0}=q_{0}. \label{R-E71}
\end{equation}
As in \cite{KRH}, we introduce
the quantities
\begin{equation}\label{R-E72}
v=u_t, \quad
r=u_x. \quad
\end{equation}
Then, (\ref{R-E70}) can be rewritten as the form of first order
\begin{equation}
\left\{
\begin{array}{l}
v_{t}-\psi(r)_{x}+\theta_{x}=0,\\
r_{t}-v_{x}=0,\\
\theta_{t}+q_{x}+v_{x}=0,\\
q_{t}+q+\theta_{x}=0.
\end{array}\right. \label{R-E73}
\end{equation}
The initial data are given by
\begin{equation}\label{R-E74}
(v, r, \theta, q)|_{t=0}
=(v_{0}, r_{0}, \theta_{0}, q_{0})(x)
\end{equation}
with $v_0=u_{1}$ and $r_0=u_{x0}$.

Set $W=(v, r, \theta, q)^{\top}$.
It is convenient to transform
(\ref{R-E70})-(\ref{R-E71}) into a Cauchy problem for the hyperbolic form
\begin{equation}\label{R-E75}
\left\{\begin{array}{l}
       A_{0}W_t + A_{1}W_x + LW =p_{x},\\[2mm]
       W(x, 0) = W_0(x),
\end{array}\right.
\end{equation}
where
\begin{eqnarray*}
A_{0}=\left(
\begin{array}{cccc}
1 & 0 & 0 & 0 \\
0 & a^{2} & 0 & 0 \\
0 & 0 & 1 & 0 \\
0 & 0 & 0 & 1
\end{array}
\right),\ \
A_{1}=-\left(
\begin{array}{cccc}
0 & a^{2} & 0 & 0 \\
a^{2} & 0 & 0 & 0 \\
0 & 0 & 0 & 1 \\
0 & 0 & 1 & 0
\end{array}
\right),\ \
L=\left(
\begin{array}{cccc}
0 & 0 & 0 & 0 \\
0 & 0 & 0 & 0 \\
0 & 0 & 0 & 0 \\
0 & 0 & 0 & 1
\end{array}
\right)
\end{eqnarray*}
and
\begin{eqnarray*}
p=(p_{1},0,0,0)^{\top}
\end{eqnarray*}
with $p_{1}=\psi(r)-\psi(0)-a^2r=O(r^2)$ and $\psi'(0)=a^2$.

If the relaxation time $\tau$ vanishes in (\ref{R-E70}), at the formal level, we see that (\ref{R-E70}) reduces to the classical Fourier's law. If
$\tau>0$, the heat conduction is modelling by Cattaneo's law. Wang et al. \cite{YW} obtained the $L^p$-$L^q$ decay estimates for the linearized thermoelastic system. Subsequently, Racke and Wang \cite{RW} considered the nonlinear thermoelastic system with more general nonlinearities and derived the decay rates. Tarabek \cite{T} investigated a class of quasilinear thermoelastic system in both bounded and unbounded domains and established the global existence result for small initial data, however, the decay rates are absent. Recently, Kasimov, Racke and Said-Houari \cite{KRH} improved those decay properties in \cite{YW} such that linearized solutions decay faster with a rate of $t^{-\gamma/2}$ with the additional assumption $\int_{\mathbb{R}}U_{0}dx=0$, by introducing the integral space $L^{1,\gamma}(\mathbb{R})$. Furthermore, they deduced the optimal decay rates for (\ref{R-E70}) by employing time-weighted energy approaches in \cite{Ma}, if the initial data $W_{0}\in H^{s}(\mathbb{R})\cap L^{1}(\mathbb{R})(s\geq 3)$. Additionally,
the formation of singularities in thermoelasticity with second sound was recently investigated in \cite{HR}.

Observe that the symmetric system in (\ref{R-E75}) satisfies the Kawashima-Shizuta condition in \cite{SK} and the degenerate matrix $L$ is also symmetric. Therefore,
(\ref{R-E75}) is included in the general framework investigated by \cite{XK1}. As the direct application, we obtain the following global-in-time result in spatially critical Besov spaces.

\begin{thm}\label{thm5.2}
Suppose $W_{0}\in B^{3/2}_{2,1}(\mathbb{R})$. There exists a constant $\delta_{0}>0$ such that
if
\begin{eqnarray*}
\|W_{0}\|_{B^{3/2}_{2,1}(\mathbb{R})}\leq \delta_{0},
\end{eqnarray*}
then (\ref{R-E75}) admits a unique
global solution $W(t,x)\in \mathcal{C}^{1}(\mathbb{R}^{+}\times
\mathbb{R})$ satisfying
\begin{eqnarray*}
W(t,x)\in
\widetilde{\mathcal{C}}(B^{3/2}_{2,1}(\mathbb{R}))\cap
\widetilde{\mathcal{C}}^1(B^{1/2}_{2,1}(\mathbb{R})).
\end{eqnarray*}
Moreover, the energy inequality holds that
\begin{eqnarray}
&&\|W\|_{\widetilde{L}^\infty(B^{3/2}_{2,1}(\mathbb{R}))}
+\mu_{0}\Big(\|q\|_{\widetilde{L}^2(B^{3/2}_{2,1}(\mathbb{R}))}
+\|\nabla
W\|_{\widetilde{L}^2(B^{1/2}_{2,1}(\mathbb{R}))}\Big)
\nonumber\\&\leq&
C_{0}\|W_{0}\|_{B^{3/2}_{2,1}(\mathbb{R})}.
\label{R-E76}
\end{eqnarray}
for positive constants $C_{0}$ and $\mu_{0}$.
\end{thm}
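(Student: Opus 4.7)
\medskip

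\noindent\textbf{Proof proposal for Theorem \ref{thm5.2}.} The plan is to realize \eqref{R-E75} as an instance of the abstract framework of \cite{XK1} and then quote that paper's global existence theorem. First I would verify Condition (A): inspection of the explicit block structure shows $A_{0}=\mathrm{diag}(1,a^{2},1,1)$ is symmetric and positive definite, $A_{1}$ is symmetric with vanishing diagonal, and $L$ is the orthogonal projection onto the $q$-component, hence symmetric positive semidefinite with $\ker L=\mathcal{M}=\{(v,r,\theta,0)^\top\}$ of dimension $N_{1}=3$. Thus the degenerate dissipation is localized in a single component, and the source $p_x$ lies in $\mathcal{M}^{\top \, c}$ (derivative of the first component), matching the partial form of $G(U)$ posited in the introduction.

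Next I would check Condition (K). Since $n=1$ we need only build $K(\pm 1)$, and the standard construction for thermoelasticity with second sound (see \cite{YW}, also consistent with \cite{SK}) supplies a constant antisymmetric $K$ such that $(K(\omega)A(\omega))_{1}$ is positive definite on $\mathcal{M}$: morally, the off-diagonal couplings $v\leftrightarrow r$ via $a^{2}\partial_x$, $v\leftrightarrow \theta$ via $\partial_x$ and $\theta\leftrightarrow q$ via $\partial_x$ form a chain that transfers the damping on $q$ to all remaining components, yielding the dissipative bound $\mathrm{Re}\,\lambda(i\xi)\lesssim -\eta_{1}(\xi)$ for the Fourier symbol of the linearization. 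I would state the explicit $K$ and check positivity on $\mathcal{M}$ by a direct $4\times 4$ computation; this is the only genuinely system-specific step.

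Third, I would verify that the nonlinearity fits the form required in \cite{XK1}. Since $\psi\in C^{\infty}$ with $\psi'(0)=a^{2}$, the remainder $p_{1}=\psi(r)-\psi(0)-a^{2}r$ satisfies $p_{1}(0)=p_{1}'(0)=0$, hence by Proposition \ref{prop2.3} one has $\|p_{1}\|_{B^{3/2}_{2,1}}\lesssim \mathbf{C}(\|r\|_{L^{\infty}})\|r\|_{B^{3/2}_{2,1}}^{2}$. Composition estimates and the product law (Proposition \ref{prop2.1}) control all nonlinear contributions at the critical index $s_{c}=1+n/2=3/2$, so the Cauchy problem \eqref{R-E75} falls exactly under the hypotheses of the local-in-time and global-in-time theorems proved in \cite{XK1}.

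Finally, I would invoke \cite{XK1} directly to conclude existence in $\widetilde{\mathcal{C}}(B^{3/2}_{2,1})\cap \widetilde{\mathcal{C}}^{1}(B^{1/2}_{2,1})$ for small data, together with the energy inequality \eqref{R-E76}. The two dissipative norms on the left of \eqref{R-E76} arise from the standard frequency-localized energy estimate in \cite{XK1}: testing $\dot\Delta_{j}W$ against the symmetrizer $A_{0}$ gives the direct dissipation $\|q\|_{\widetilde{L}^{2}(B^{3/2}_{2,1})}$ from the $L$-term, while a further test against the compensator $K$ (with a small weight $\epsilon 2^{-j}$ so as not to destroy positivity) recovers the lost-one-derivative bound $\|\nabla W\|_{\widetilde{L}^{2}(B^{1/2}_{2,1})}$ from the Kawashima--Shizuta mechanism. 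The main obstacle, as noted, is the explicit verification of Condition (K); all remaining steps are mechanical applications of the abstract framework.
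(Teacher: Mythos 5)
Your proposal is correct and follows essentially the same route as the paper: the paper's entire justification is the observation, immediately preceding the theorem, that the symmetrized system (\ref{R-E75}) satisfies Condition (A) (with $L$ symmetric and degenerate) and the Kawashima--Shizuta Condition (K), so that the global existence theorem and energy inequality of \cite{XK1} apply directly. Your additional remarks on verifying $K$ explicitly, on the quadratic remainder $p_{1}=O(r^{2})$ via Proposition \ref{prop2.3}, and on the origin of the two dissipative norms in (\ref{R-E76}) simply flesh out what the paper leaves implicit.
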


\begin{rem}
Theorem \ref{thm1.1} exhibits the optimal critical regularity of global existence of classical solutions to (\ref{R-E75}), which improves the result in \cite{KRH}, where $s\geq2$. From (\ref{R-E76}), we see that the variable $q$ in (\ref{R-E75}) is non-degenerately dissipative, whereas other variables are degenerately dissipative.
\end{rem}

By employing the energy method in Fourier spaces in \cite{KRH}, it is known that the linearized system
\begin{equation}\label{R-E755}
\left\{\begin{array}{l}
       A_{0}W_t + A_{1}W_x + LW =0,\\[2mm]
       W(x, 0) = W_0(x),
\end{array}\right.
\end{equation}
satisfies the differential inequality
\begin{eqnarray}
\frac{d}{dt}E[\hat{W}]+c\eta_{1}(\xi)|\hat{W}|^2\leq0, \label{R-E77}
\end{eqnarray}
for $c>0$, where $E[\hat{W}]\approx|\hat{W}|^2$ and $\eta_1(\xi)=\frac{\xi^2}{1+\xi^2}$. Hence,  (\ref{R-E755}) enjoys the similar decay properties as in Propositions \ref{prop1.1}-\ref{prop1.2}. Noticing that the special nonlinear structure in (\ref{R-E75}), we define the same time-weighted energy functionals as the compressible Euler equations (\ref{R-E58})
\begin{eqnarray*}
\mathcal{\tilde{E}}_{0}(t):=\sup_{0\leq\tau\leq t}\|W(\tau)\|_{B^{3/2}_{2,1}};
\end{eqnarray*}
\begin{eqnarray*}
\mathcal{\tilde{E}}_{1}(t):&=&\sup_{0\leq\ell<1/2}\sup_{0\leq\tau\leq t}(1+\tau)^{\frac{s+\ell}{2}}\|\Lambda^{\ell}W(\tau)\|_{B^{1/2-\ell}_{2,1}}\nonumber\\&&+\sup_{0\leq\tau\leq t}(1+\tau)^{\frac{s+1/2}{2}}\|\Lambda^{\frac{1}{2}}W(\tau)\|_{\dot{B}^{0}_{2,1}}.
\end{eqnarray*}

Then proceeding the same localized time-weighted energy approaches, we obtain the following decay rates of classical solutions.
\begin{thm}\label{thm4.1}
Let $W=(v,r,\theta,q)^{\top}$ be the global classical solution in the sense of Theorem \ref{thm5.2}. Suppose that $U_{0}\in B^{3/2}_{2,1}(\mathbb{R})\cap \dot{B}^{-s}_{2,\infty}(\mathbb{R}) (1/4<s\leq1/2)$ and the norm
$\mathcal{M}_{0}:=\|W_{0}\|_{B^{3/2}_{2,1}(\mathbb{R})\cap \dot{B}^{-s}_{2,\infty}(\mathbb{R})}$ is sufficiently small. Then it holds that
\begin{eqnarray}
\|\Lambda^{\ell}W(t)\|_{X_{2}}\lesssim \mathcal{M}_{0}(1+t)^{-\frac{s+\ell}{2}} \label{R-E78}
\end{eqnarray}
for $0\leq\ell\leq1/2$, where $X_{2}:=B^{1/2-\ell}_{2,1}(\mathbb{R})$ if $0\leq\ell<1/2$ and $X_{2}:=\dot{B}^{0}_{2,1}(\mathbb{R})$ if $\ell=1/2$.
In particular, one has
\begin{eqnarray}
\|\Lambda^{\ell}W(t)\|_{L^2(\mathbb{R})}\lesssim \mathcal{M}_{0}(1+t)^{-\frac{s+\ell}{2}} \label{R-E79}
\end{eqnarray}
for $0\leq\ell\leq1/2$.
\end{thm}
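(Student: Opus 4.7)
The plan is to mirror the proof of Theorem~\ref{thm1.1}, exploiting the fact that the nonlinearity in (\ref{R-E75}) appears purely in derivative form $p_{x}$ with $p=(p_{1},0,0,0)^{\top}$, $p_{1}=O(r^{2})$; there is no undifferentiated $Q$-type term in $\mathcal{M}^{\top}$, so the structure is analogous to the damped Euler case of Theorem~\ref{thm4.1} rather than the general system. The linearized semigroup $\mathcal{G}(t)$ associated with (\ref{R-E755}) satisfies (\ref{R-E77}), hence the analogues of Propositions~\ref{prop1.1}--\ref{prop1.2} are available. Applying $\dot{\Delta}_{j}\Lambda^{\ell}$ to the Duhamel identity
$$
W(t)=\mathcal{G}(t)W_{0}+\int_{0}^{t}\mathcal{G}(t-\tau)\,A_{0}^{-1}p_{x}(\tau)\,d\tau,
$$
I would split the sum over $j$ into the high-frequency part ($j\geq 0$) and the low-frequency part ($j<0$), and bound $\tilde{\mathcal{E}}_{1}(t)$ by these pieces.

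For the high-frequency part I would use the exponential-decay estimate from Proposition~\ref{prop1.1} together with Moser-type product estimates (Proposition~\ref{prop2.1}) and composition (Proposition~\ref{prop2.3}) to control $\|p_{x}\|_{B^{\ell}_{2,1}}\lesssim \|r\|_{B^{\ell}_{2,1}}\|r\|_{B^{3/2}_{2,1}}$, which by the definition of $\tilde{\mathcal{E}}_{0}$, $\tilde{\mathcal{E}}_{1}$ decays like $(1+\tau)^{-(s+\ell)/2}\tilde{\mathcal{E}}_{0}(t)\tilde{\mathcal{E}}_{1}(t)$. Time-integration against $e^{-c(t-\tau)}$ then yields the bound $e^{-ct}\|W_{0}\|_{B^{1/2}_{2,1}}+(1+t)^{-(s+\ell)/2}\tilde{\mathcal{E}}_{0}(t)\tilde{\mathcal{E}}_{1}(t)$ for the inhomogeneous high-frequency contribution, exactly as in (\ref{R-E64}).

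For the low-frequency part, I would apply Lemma~\ref{lem3.2} (for $\ell<1/2$) or Lemma~\ref{lem3.3} (for $\ell=1/2$) to treat the derivative form $p_{x}$, and split $\int_{0}^{t}=\int_{0}^{t/2}+\int_{t/2}^{t}$. On $[0,t/2]$ I would choose $\sigma=0$ in the frequency-localized Duhamel lemma and estimate $\|p\|_{\dot{B}^{-s}_{2,\infty}}$ via the embedding $L^{m}\hookrightarrow \dot{B}^{-s}_{2,\infty}$ from Lemma~\ref{lem6.5} with $1/m=s+1/2$, then Hölder and the improved Gagliardo--Nirenberg inequality (Lemma~\ref{lem6.4})
$$
\|p\|_{\dot{B}^{-s}_{2,\infty}}\lesssim \|r\|_{L^{1/s}}\|r\|_{L^{2}}\lesssim \bigl(\|\Lambda^{\varepsilon}r\|_{L^{2}}+\|\Lambda^{\alpha}r\|_{L^{2}}\bigr)\|r\|_{L^{2}},
$$
with $(\varepsilon,\alpha)$ satisfying $1-2s\leq \varepsilon<\alpha\leq 1/2$; this is exactly where $s>1/4$ enters, forcing the admissible range $(1/4,1/2]$ claimed in the theorem. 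On $[t/2,t]$ I would instead take $\sigma=s+1/2$ and use $\|\Lambda^{s+1/2}p\|_{\dot{B}^{-s}_{2,\infty}}\lesssim \|p\|_{\dot{B}^{1/2}_{2,\infty}}\lesssim \|r\|_{\dot{B}^{1/2}_{2,1}}^{2}$, as in (\ref{R-E47})--(\ref{R-E51}). Both pieces then yield a $(1+t)^{-(s+\ell)/2}\tilde{\mathcal{E}}_{1}(t)^{2}$ contribution.

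Putting the high- and low-frequency pieces together along with the linear decay $\mathcal{M}_{0}(1+t)^{-(s+\ell)/2}$ from the analogue of Proposition~\ref{prop1.1} gives the bootstrap inequality
$$
\tilde{\mathcal{E}}_{1}(t)\lesssim \mathcal{M}_{0}+\tilde{\mathcal{E}}_{0}(t)\tilde{\mathcal{E}}_{1}(t)+\tilde{\mathcal{E}}_{1}(t)^{2}.
$$
Theorem~\ref{thm5.2} provides $\tilde{\mathcal{E}}_{0}(t)\lesssim \mathcal{M}_{0}$, so for $\mathcal{M}_{0}$ small a standard continuation argument yields $\tilde{\mathcal{E}}_{1}(t)\lesssim \mathcal{M}_{0}$, which is precisely (\ref{R-E78}); estimate (\ref{R-E79}) follows from $\dot{B}^{0}_{2,1}\hookrightarrow L^{2}$ and Lemma~\ref{lem2.2}(1). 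The main obstacle is the low-frequency analysis on $[0,t/2]$: one has to balance the interpolation constraints on $(\varepsilon,\alpha)$ against the requirement $s+\varepsilon/2<3/4$ needed so that the time integral $\int_{0}^{t/2}(1+\tau)^{-s-\varepsilon/2}d\tau$ produces a factor no worse than $(1+t)^{1/2-s-\varepsilon/2}$, and this balance forces the lower bound $s>1/4$ that is absent in higher dimensions.
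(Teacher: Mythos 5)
Your proposal is correct and takes essentially the same approach as the paper, which proves this theorem only by declaring that one defines the same time-weighted functionals as for the damped Euler system and repeats the localized time-weighted energy argument of Theorem \ref{thm1.1}; your write-up supplies exactly those steps (purely derivative-form nonlinearity $p_x$ with $p=O(r^2)$ and no $Q$-term, high-frequency exponential decay with Moser/composition estimates, Lemmas \ref{lem3.2}--\ref{lem3.3} with $\sigma=0$ on $[0,t/2]$ and $\sigma=s+1/2$ on $[t/2,t]$, and the constraint $1-2s\le\varepsilon<\alpha\le 1/2$ forcing $s>1/4$). The only quibble is your closing sentence: the binding requirement from the time integral is the lower bound $s+\varepsilon/2\ge 1/2$ (so the extra factor $(1+t)^{1/2-s-\varepsilon/2}$ is harmless), not the upper bound $s+\varepsilon/2<3/4$, but this does not affect the argument since you impose $\varepsilon\ge 1-2s$ anyway.
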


As a consequence, we have
\begin{cor}\label{cor5.2}
Let $W=(v,r,\theta,q)^{\top}$ be the global classical solution in the sense of Theorem \ref{thm5.2}. Suppose that $W_{0}\in B^{3/2}_{2,1}(\mathbb{R})\cap L^1(\mathbb{R})$ and the norm
$\mathcal{\widetilde{M}}_{0}:=\|W_{0}\|_{B^{3/2}_{2,1}(\mathbb{R})\cap L^1(\mathbb{R})}$ is sufficiently small. Then it holds that
\begin{eqnarray}
\|\Lambda^{\ell}W(t,\cdot)\|_{L^2(\mathbb{R})}\lesssim \mathcal{\widetilde{M}}_{0}(1+t)^{-\frac{1}{4}-\frac{\ell}{2}} \label{R-E80}
\end{eqnarray}
for $0\leq\ell\leq 1/2$.
\end{cor}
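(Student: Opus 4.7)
The plan is to reduce Corollary \ref{cor5.2} to Theorem \ref{thm4.1} via the embedding of $L^1$ into a negative homogeneous Besov space of Chemin--Lerner type, so that no new nonlinear analysis is required. The only structural input needed is the embedding supplied by Lemma \ref{lem6.5}: for dimension $n=1$, choosing the index so that $1/1 = 1/2 + s/1$ yields
\begin{equation*}
L^{1}(\mathbb{R}) \hookrightarrow \dot{B}^{-1/2}_{2,\infty}(\mathbb{R}),
\end{equation*}
which is the borderline case $s = n/2 = 1/2$ in the range $(1/4, 1/2]$ admitted by Theorem \ref{thm4.1}.

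First I would invoke the above embedding to estimate $\|W_{0}\|_{\dot{B}^{-1/2}_{2,\infty}} \lesssim \|W_{0}\|_{L^{1}}$. Combined with the trivial bound on the high-regularity part, this yields
\begin{equation*}
\mathcal{M}_{0} \;=\; \|W_{0}\|_{B^{3/2}_{2,1}(\mathbb{R}) \cap \dot{B}^{-1/2}_{2,\infty}(\mathbb{R})} \;\lesssim\; \|W_{0}\|_{B^{3/2}_{2,1}(\mathbb{R}) \cap L^{1}(\mathbb{R})} \;=\; \widetilde{\mathcal{M}}_{0}.
\end{equation*}
In particular, if $\widetilde{\mathcal{M}}_{0}$ is sufficiently small, then so is $\mathcal{M}_{0}$, and all the hypotheses of Theorem \ref{thm4.1} are satisfied with the specific choice $s = 1/2$.

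Next I would apply Theorem \ref{thm4.1} directly. Setting $s = 1/2$ in the decay bound (\ref{R-E79}) gives
\begin{equation*}
\|\Lambda^{\ell}W(t,\cdot)\|_{L^{2}(\mathbb{R})} \;\lesssim\; \mathcal{M}_{0}\, (1+t)^{-\frac{1/2+\ell}{2}} \;\lesssim\; \widetilde{\mathcal{M}}_{0}\, (1+t)^{-\frac{1}{4} - \frac{\ell}{2}},
\end{equation*}
uniformly in $0 \leq \ell \leq 1/2$, which is precisely (\ref{R-E80}). Since the substantive work—time-weighted energy estimates split into low/high-frequency pieces and the frequency-localized Duhamel lemmas of Section \ref{sec:3}—has already been carried out in the proof of Theorem \ref{thm4.1}, there is no real obstacle here; the only subtlety worth mentioning is that $s = 1/2$ sits exactly at the upper endpoint of the admissible range $(1/4, 1/2]$, so one should verify that none of the intermediate inequalities in the proof of Theorem \ref{thm4.1} (in particular the constraint $s + \varepsilon/2 \in [1/2, 3/4)$ used to bound $\mathcal{I}_{1}$) degenerate at this endpoint—this is indeed the case, since $\varepsilon$ can be taken strictly less than $1/2$.
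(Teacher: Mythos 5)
Your proposal is correct and matches the paper's (implicit) argument exactly: the corollary is obtained by combining the embedding $L^1(\mathbb{R})\hookrightarrow\dot{B}^{-1/2}_{2,\infty}(\mathbb{R})$ from Lemma \ref{lem6.5} with the decay estimate (\ref{R-E79}) at the admissible endpoint $s=1/2$. Your extra check that the constraints in the time-weighted energy argument do not degenerate at $s=1/2$ is sound but not needed, since that endpoint is already covered by the hypothesis $s\in(1/4,1/2]$ of the decay theorem.
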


\begin{rem}
The harmonic analysis allow to reduce significantly the regularity requirements on the initial data in
comparison with the previous works \cite{KRH,RW}, where $s\geq3$.
\end{rem}

\subsection{Timoshenko systems with equal speeds}
Consider the dissipative Timoshenko system, which is a set of two coupled wave equations of the form
\begin{equation}
\left\{
\begin{array}{l}
       \varphi_{tt}-(\varphi_x-\psi)_x=0,\\
       \psi_{tt}-\sigma(\psi_{x})_{x}-(\varphi_x-\psi)+\gamma \psi_t =0,
       \end{array}\right. \label{R-E81}
\end{equation}
which describes the transverse vibrations of a beam. Here $t\geq 0$ is the time variable, $x\in \mathbb{R}$ is the spacial
variable which denotes the point on the center line of the beam,
$\varphi(t,x)$ is the transversal displacement of the beam from an equilibrium state, and $\psi$ is the rotation
angle of the filament of the beam. The smooth function $\sigma(\eta)$ satisfies $\sigma'(\eta)>0$ for any $\eta\in\mathbb{R}$, and $\gamma$
is a positive constant, which is assumed to be one for simplicity.

System (\ref{R-E81}) is supplemented
with the initial data
\begin{equation}
(\varphi, \varphi_{t}, \psi, \psi_{t})(x,0)
=(\varphi_{0}, \varphi_{1}, \psi_{0}, \psi_{1})(x).\label{R-E82}
\end{equation}
The linearized system  of (\ref{R-E81}) reads correspondingly as
\begin{equation}
\left\{\begin{array}{l}
       \varphi_{tt}-(\varphi_x-\psi)_x=0,\\[2mm]
       \psi_{tt}-a^2\psi_{xx}-(\varphi_x-\psi)+\psi_t =0,
       \end{array}\right. \label{R-E83}
\end{equation}
with $a>0$ is the sound speed, which is defined by $a^2=\sigma'(0)$. The case $a=1$ corresponds to the Timoshenko system with equal wave speeds.
In this paper, we focus on the case.
The second author et al. \cite{IHK} introduced
the following quantities
\begin{equation}\label{R-E84}
v=\varphi_x-\psi, \quad
u=\varphi_t, \quad
z=\psi_x, \quad
y=\psi_t,
\end{equation}
so that  (\ref{R-E83}) can be rewritten as
\begin{equation}\label{R-E85}
\left\{\begin{array}{l}
        v_t-u_x+y=0,\\[2mm]
        u_t-v_x=0,\\[2mm]
        z_t-y_x=0,\\[2mm]
         y_t-z_x-v+y=0.
\end{array}\right.
\end{equation}
The initial data are given by
\begin{equation}\label{R-E86}
(v, u, z, y)(x, 0)
=(v_{0}, u_{0}, z_{0}, y_{0})(x),
\end{equation}
where $v_0=\varphi_{0,x}-\psi_{0}$, $y_0=\psi_1$,
$u_0=\varphi_1$ and $z_0=\psi_{0,x}$.
Furthermore, it was shown by \cite{IHK} that the dissipative structure of (\ref{R-E85})
is characterized by
\begin{equation}
{\rm Re}\,\lambda(i\xi)\leq -c\eta_1(\xi)
   \qquad {\rm for} \quad a=1,
\label{R-E87}
\end{equation}
for $c>0$, where $\lambda(i\xi)$ denotes the eigenvalues of the system
(\ref{R-E85}) in Fourier spaces and $\eta_1(\xi)=\frac{\xi^2}{1+\xi^2}$.

To state main results, it is convenient to rewrite (\ref{R-E81})-(\ref{R-E82}) as a Cauchy problem for
the hyperbolic form of first order
\begin{equation}\label{R-E88}
\left\{\begin{array}{l}
        V_t + A_{1}V_x + LV =p_{x},\\[2mm]
        V(x, 0) = V_0(x),
\end{array}\right.
\end{equation}
where
\begin{eqnarray*}
A_{1}=-\left(
\begin{array}{cccc}
0 & 1 & 0 & 0 \\
1 & 0 & 0 & 0 \\
0 & 0 & 0 & 1 \\
0 & 0 & 1 & 0
\end{array}
\right),\ \ \
L=\left(
\begin{array}{cccc}
0 & 0 & 0 & 1 \\
0 & 0 & 0 & 0 \\
0 & 0 & 0 & 0 \\
-1 & 0 & 0 & 1
\end{array}
\right).
\end{eqnarray*}
\begin{eqnarray*}
p=(0,0,0,p_{4})^{\top}
\end{eqnarray*}
with $p_{4}=\sigma(z)-\sigma(0)-z=O(z^2)$.

In \cite{IHK}, the second author et al. gave the decay property for linearized form (\ref{R-E85}) by Fourier energy methods, however, the energy functionals
used are not optimal, which leads to higher regularity needed to establish the global-in-time existence of smooth solutions to (\ref{R-E81})-(\ref{R-E82}), see \cite{IK}.
Recently, the optimal energy method was derived by a careful analysis for asymptotic expansions of the eigenvalues, see \cite{MK}. Racke and Said-Houari \cite{RS} strengthened decay properties in \cite{IHK} such that linearized solutions decay faster with a rate of $t^{-\gamma/2}$. Additionally,
the interested reader is referred to Timoshenko systems of other dissipative forms, for example, see \cite{ABMR} for memory-type dissipation case,
and \cite{FR} for thermal dissipation case.

Notice that $A_{1}$ is a real symmetric matrix, and the matrix $L$ is nonnegative definite but not symmetric,
so the Timoshenko system (\ref{R-E10}) is an example of hyperbolic systems
with non-symmetric dissipation, which lays outside the range of
the general study (see \cite{XK1}) for hyperbolic systems with the symmetric dissipation.
Recently, Mori, the first and second authors investigate the system in the framework of spatially Besov spaces and the result can be
drawn briefly as follows:
\begin{equation}
\left\{
\begin{array}{l} \mbox{The initial data} \
V_{0}\in B^{3/2}_{2,1}(\mathbb{R}),\\
\exists \delta_{0}>0\quad  \mbox{such that}\ \ I_{0}:=\|V_{0}\|_{B^{3/2}_{2,1}(\mathbb{R})}\leq \delta_{0},\\
 \end{array} \right.\label{R-E89}
\end{equation}
$\Rightarrow$
\begin{equation}
\left\{
\begin{array}{l}
V \in\widetilde{\mathcal{C}}(B^{3/2}_{2,1}(\mathbb{R}))\cap\widetilde{\mathcal{C}}^{1}(B^{1/2}_{2,1}(\mathbb{R})),\\
E(t)+D(t)\leq CI_{0},
 \end{array} \right.\label{R-E90}
\end{equation}
where
$$E(t):=\|V\|_{\widetilde{L}^\infty(B^{3/2}_{2,1}(\mathbb{R}))},$$
$$D(t):=\|y\|_{\widetilde{L}^2(B^{3/2}_{2,1})}+\|(v,z_{x})\|_{\widetilde{L}^2(B^{1/2}_{2,1})}
+\|u_{x}\|_{\widetilde{L}^2(B^{-1/2}_{2,1})}.$$
Clearly, we see that there is 1-regularity-loss phenomenon for the dissipation rates. Furthermore, under additional regularity assumption
$U_{0}\in \dot{B}^{-1/2}_{2,\infty}(\mathbb{R})$, we deduce the optimal decay rates of solution and its derivative of fraction order,
the interested author is referred to \cite{MXK}. Here, based on the new observations on the frequency-localization Duhamel principle in Section \ref{sec:3} as well as interpolation techniques, we can obtain desired  decay estimates if $U_{0}\in B^{3/2}_{2,1}(\mathbb{R})\cap\dot{B}^{-s}_{2,\infty}(\mathbb{R})$ with $s\in (1/4,1/2]$, which generalizes those results in \cite{MXK}.
\begin{thm}\label{thm5.4}
Let $V=(v, u, z, y)^{\top}$ be the global classical solution constructed by \cite{MXK}. Suppose that $V_{0}\in B^{3/2}_{2,1}(\mathbb{R})\cap \dot{B}^{-s}_{2,\infty}(\mathbb{R}) \ (1/4<s\leq1/2)$ and the norm
$\mathcal{M}_{0}:=\|V_{0}\|_{B^{3/2}_{2,1}(\mathbb{R})\cap \dot{B}^{-s}_{2,\infty}(\mathbb{R})}$ is sufficiently small. Then it holds that
\begin{eqnarray}
\|\Lambda^{\ell}V(t)\|_{X_{2}}\lesssim \mathcal{M}_{0}(1+t)^{-\frac{s+\ell}{2}} \label{R-E91}
\end{eqnarray}
for $0\leq\ell\leq1/2$, where $X_{2}:=B^{1/2-\ell}_{2,1}(\mathbb{R})$ if $0\leq\ell<1/2$ and $X_{2}:=\dot{B}^{0}_{2,1}(\mathbb{R})$ if $\ell=1/2$.
In particular, one has
\begin{eqnarray}
\|\Lambda^{\ell}V(t)\|_{L^2(\mathbb{R})}\lesssim \mathcal{M}_{0}(1+t)^{-\frac{s+\ell}{2}} \label{R-E92}
\end{eqnarray}
for $0\leq\ell\leq1/2$.
\end{thm}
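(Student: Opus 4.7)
My plan is to transplant the proof of Theorem \ref{thm1.1} (equivalently, Proposition \ref{prop4.1}) onto the Timoshenko system (\ref{R-E88}). The enabling input is the pointwise Fourier estimate
\[
\frac{d}{dt}E[\hat V] + c\,\eta_1(\xi)\,|\hat V|^2 \leq 0, \qquad E[\hat V]\approx|\hat V|^2, \qquad \eta_1(\xi)=\frac{\xi^2}{1+\xi^2},
\]
already established for the linearized equal-speed Timoshenko system in \cite{IHK, MXK} despite the non-symmetric $L$. Granted this, Propositions \ref{prop1.1}-\ref{prop1.2}, Lemma \ref{lem3.1} and the low-frequency decay Lemmas \ref{lem3.2}-\ref{lem3.3} apply verbatim with $\mathcal{G}$ now denoting the Timoshenko semigroup. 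Crucially, the nonlinear source in (\ref{R-E88}) is of pure derivative form $p_{x}$ with $p=(0,0,0,p_4)^\top$ and $p_4=\sigma(z)-\sigma(0)-z=O(z^2)$, so there is no non-degenerate contribution and Lemma \ref{lem3.4} need not be invoked.

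Mirroring the thermoelastic case, I would introduce
\[
\widetilde{\mathcal{E}}_0(t)=\sup_{0\leq\tau\leq t}\|V(\tau)\|_{B^{3/2}_{2,1}},
\]
\[
\widetilde{\mathcal{E}}_1(t)=\sup_{0\leq\ell<1/2}\sup_{0\leq\tau\leq t}(1+\tau)^{\frac{s+\ell}{2}}\|\Lambda^\ell V(\tau)\|_{B^{1/2-\ell}_{2,1}}+\sup_{0\leq\tau\leq t}(1+\tau)^{\frac{s+1/2}{2}}\|\Lambda^{1/2}V(\tau)\|_{\dot{B}^0_{2,1}},
\]
and close the bootstrap inequality
\[
\widetilde{\mathcal{E}}_1(t)\lesssim \mathcal{M}_0 + \widetilde{\mathcal{E}}_0(t)\widetilde{\mathcal{E}}_1(t)+\widetilde{\mathcal{E}}_1(t)^2.
\]
Since $\widetilde{\mathcal{E}}_0(t)\lesssim \mathcal{M}_0$ by the global existence result of \cite{MXK} and $\mathcal{M}_0$ is small, a standard continuity argument then gives (\ref{R-E91}); (\ref{R-E92}) follows by $\dot{B}^0_{2,1}\hookrightarrow L^2$ together with item (1) of Lemma \ref{lem2.2}.

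To prove the bootstrap I decompose $V$ dyadically. At high frequencies $(j\geq 0)$, Lemma \ref{lem3.1} combined with the exponential decay half of Proposition \ref{prop1.1} reduces matters to bounding $\|p_x\|_{B^{1/2-\ell}_{2,1}}$, which by Propositions \ref{prop2.1} and \ref{prop2.3} is at most $\|V\|_{\dot{B}^{1/2-\ell}_{2,1}}\|V\|_{B^{3/2}_{2,1}}\lesssim (1+\tau)^{-(s+\ell)/2}\widetilde{\mathcal{E}}_0\widetilde{\mathcal{E}}_1$. At low frequencies I split $\int_0^t=\int_0^{t/2}+\int_{t/2}^t$ and apply Lemma \ref{lem3.3} with $\sigma=0$ on the first interval and with $\sigma=s+1/2$ on the second, exactly as in (\ref{R-E41}) and (\ref{R-E47}). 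On $[0,t/2]$ the integrand is driven by $\|p\|_{\dot{B}^{-s}_{2,\infty}}$, which I control via $L^m\hookrightarrow\dot{B}^{-s}_{2,\infty}$ with $1/m=s+1/2$ (Lemma \ref{lem6.5}), H\"older, and the fractional Gagliardo--Nirenberg interpolation of Lemma \ref{lem6.4} between $\dot{H}^\varepsilon$ and $\dot{H}^\alpha$ with $1-2s\leq \varepsilon<\alpha\leq 1/2$. This yields $\|p\|_{\dot{B}^{-s}_{2,\infty}}\lesssim (1+\tau)^{-s-\varepsilon/2}\widetilde{\mathcal{E}}_1(t)^2$, which is just integrable against the kernel $(1+t-\tau)^{-(s+\ell+1)/2}$ because $s+\varepsilon/2\in[1/2,3/4)$; this is the same window that enforces $s\in(1/4,1/2]$. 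On $[t/2,t]$ one uses Proposition \ref{prop2.3} to bound $\|\Lambda^{s+1/2}p\|_{\dot{B}^{-s}_{2,\infty}}\lesssim \|p\|_{\dot{B}^{1/2}_{2,1}}\lesssim \|V\|_{\dot{B}^{1/2}_{2,1}}^2$, and the $(1+\tau)^{-1}$ factor then produces the desired rate.

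The main obstacle I anticipate is not the nonlinear bookkeeping, which is essentially a replay of Proposition \ref{prop4.1}, but rather ensuring that the linear decay machinery behind Propositions \ref{prop1.1}-\ref{prop1.2} (dyadic pointwise energy estimate plus interpolation against $\dot{B}^{-s}_{2,\infty}$) genuinely survives the 1-regularity-loss, non-symmetric structure of (\ref{R-E88}). Once the above pointwise Fourier estimate is imported from \cite{MXK}, every subsequent step is a direct adaptation of the symmetric-case analysis in Section \ref{sec:4}.
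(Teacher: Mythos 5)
Your proposal is correct and follows essentially the same route as the paper, which disposes of Theorem \ref{thm5.4} by noting that the proof is a verbatim transplant of the thermoelasticity argument (itself modelled on Proposition \ref{prop4.1}): import the pointwise Fourier energy estimate for the equal-speed linearized Timoshenko system from \cite{IHK,MXK}, use the same time-weighted functionals $\widetilde{\mathcal{E}}_0,\widetilde{\mathcal{E}}_1$, exploit that the source is the pure derivative term $p_x$ with $p=O(z^2)$ so that Lemma \ref{lem3.4} is unnecessary, and close the bootstrap $\widetilde{\mathcal{E}}_1\lesssim \mathcal{M}_0+\widetilde{\mathcal{E}}_0\widetilde{\mathcal{E}}_1+\widetilde{\mathcal{E}}_1^2$. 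Your only cosmetic deviation is citing Lemma \ref{lem3.3} where the inhomogeneous block $\Delta_{-1}$ of the functional actually calls for Lemma \ref{lem3.2}, exactly as in the Euler/thermoelastic cases; this does not affect the argument.
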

\begin{proof}
The proof exactly follows from that of Thoerem \ref{thm5.2}, we feel free to skip it for brevity.
\end{proof}

Consequently, we have the analogue of Corollary \ref{cor5.2}.
\begin{cor}
Let $V=(v, u, z, y)^{\top}$ be the global classical solution constructed by \cite{MXK}. Suppose that $V_{0}\in B^{3/2}_{2,1}(\mathbb{R})\cap L^1(\mathbb{R})$ and the norm
$\mathcal{\widetilde{M}}_{0}:=\|V_{0}\|_{B^{3/2}_{2,1}(\mathbb{R})\cap L^1(\mathbb{R})}$ is sufficiently small. Then it holds that
\begin{eqnarray}
\|\Lambda^{\ell}V(t,\cdot)\|_{L^2(\mathbb{R})}\lesssim \mathcal{\widetilde{M}}_{0}(1+t)^{-\frac{1}{4}-\frac{\ell}{2}} \label{R-E922}
\end{eqnarray}
for $0\leq\ell\leq 1/2$.
\end{cor}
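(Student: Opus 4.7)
The statement is essentially a corollary of Theorem~\ref{thm5.4} applied at the endpoint value $s=1/2$, so my plan is to reduce it to that theorem via a single embedding step and verify that the hypotheses transfer.

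First, I would invoke the embedding $L^{1}(\mathbb{R})\hookrightarrow \dot{B}^{-1/2}_{2,\infty}(\mathbb{R})$ recorded in Lemma~\ref{lem6.5}: for $n=1$ and $p=1$, one has $1/p-1/2 = 1/2$, so the general $L^{p}\hookrightarrow \dot{B}^{-n(1/p-1/2)}_{2,\infty}$ embedding yields exactly $\|V_{0}\|_{\dot{B}^{-1/2}_{2,\infty}}\lesssim \|V_{0}\|_{L^{1}}$. Combined with the inhomogeneous control already included in $\widetilde{\mathcal{M}}_{0}$, this gives
\begin{equation*}
\|V_{0}\|_{B^{3/2}_{2,1}(\mathbb{R})\cap \dot{B}^{-1/2}_{2,\infty}(\mathbb{R})}\;\lesssim\;\|V_{0}\|_{B^{3/2}_{2,1}(\mathbb{R})\cap L^{1}(\mathbb{R})} = \widetilde{\mathcal{M}}_{0}.
\end{equation*}

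Next, provided $\widetilde{\mathcal{M}}_{0}$ is sufficiently small, the right-hand side above is also small, which means the smallness hypothesis of Theorem~\ref{thm5.4} is satisfied with $s=1/2\in(1/4,1/2]$. Applying that theorem with $s=1/2$ produces
\begin{equation*}
\|\Lambda^{\ell}V(t)\|_{X_{2}}\;\lesssim\;\|V_{0}\|_{B^{3/2}_{2,1}\cap \dot{B}^{-1/2}_{2,\infty}}\,(1+t)^{-\frac{1/2+\ell}{2}}\;\lesssim\;\widetilde{\mathcal{M}}_{0}(1+t)^{-\frac{1}{4}-\frac{\ell}{2}},
\end{equation*}
for $0\leq\ell\leq 1/2$, where $X_{2}=B^{1/2-\ell}_{2,1}(\mathbb{R})$ for $0\leq\ell<1/2$ and $X_{2}=\dot{B}^{0}_{2,1}(\mathbb{R})$ for $\ell=1/2$.

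Finally, I would convert the Besov-space bound to the $L^{2}$ bound asserted in \eqref{R-E922} by using the embeddings $B^{1/2-\ell}_{2,1}(\mathbb{R})\hookrightarrow B^{0}_{2,1}(\mathbb{R})\hookrightarrow L^{2}(\mathbb{R})$ (from parts (2) and (5) of Lemma~\ref{lem2.2}, together with $\dot{B}^{0}_{2,1}\hookrightarrow L^{2}$ in the endpoint case $\ell=1/2$). Since this is essentially a direct quotation of Theorem~\ref{thm5.4} under an embedding there is no real obstacle; the only point requiring care is ensuring that the smallness assumption on $\widetilde{\mathcal{M}}_{0}$ yields smallness in the $B^{3/2}_{2,1}\cap \dot{B}^{-1/2}_{2,\infty}$ norm, which is immediate from the embedding inequality above.
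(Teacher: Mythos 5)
Your proposal is correct and follows exactly the route the paper intends: embed $L^{1}(\mathbb{R})\hookrightarrow\dot{B}^{-1/2}_{2,\infty}(\mathbb{R})$ via Lemma~\ref{lem6.5}, apply Theorem~\ref{thm5.4} at the endpoint $s=1/2$, and pass to $L^{2}$ by the standard Besov embeddings (the paper leaves this implicit, calling the corollary a direct consequence, just as Theorem~\ref{thm1.2} follows from Theorem~\ref{thm1.1}). The only cosmetic remark is that the cleanest reference for $B^{1/2-\ell}_{2,1}\hookrightarrow L^{2}$ is part~(1) of Lemma~\ref{lem2.2} rather than parts~(2) and~(5), but the embedding itself is of course valid.
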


\section{Appendix}\setcounter{equation}{0}\label{sec:6}
For the convenience of reader, we would like to collect interpolation inequalities in $\mathbb{R}^{n}(n\geq1)$ related to Besov spaces, which
parallel the work \cite{SS}. However, we make some simplicity for use,
since their inequalities are related to the mixed spaces containing the microscopic velocity.

\begin{lem}\label{lem6.1}
Suppose $k\geq0$ and $m,\varrho>0$. Then the following inequality holds
\begin{eqnarray}
\|f\|_{\dot{B}^{k}_{2,1}}\lesssim \|f\|^{\theta}_{\dot{B}^{k+m}_{2,\infty}}\|f\|^{1-\theta}_{\dot{B}^{-\varrho}_{2,\infty}} \ \  \mbox{with}\ \ \ \theta=\frac{\varrho+k}{\varrho+k+m}. \label{R-E93}
\end{eqnarray}
\end{lem}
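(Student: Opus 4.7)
The plan is a standard Littlewood--Paley split at a threshold frequency, followed by optimization of the threshold. The definition of the $\dot{B}^{k}_{2,1}$ norm is an $\ell^{1}$ sum over dyadic blocks, so I would break it at an integer cutoff $J\in\mathbb{Z}$ (to be chosen) and estimate the low-frequency tail by the $\dot{B}^{-\varrho}_{2,\infty}$ norm and the high-frequency tail by the $\dot{B}^{k+m}_{2,\infty}$ norm.

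Concretely, the first step is to write
\begin{equation*}
\sum_{j\leq J}2^{jk}\|\dot{\Delta}_{j}f\|_{L^{2}}
=\sum_{j\leq J}2^{j(k+\varrho)}\bigl(2^{-j\varrho}\|\dot{\Delta}_{j}f\|_{L^{2}}\bigr)
\leq\|f\|_{\dot{B}^{-\varrho}_{2,\infty}}\sum_{j\leq J}2^{j(k+\varrho)},
\end{equation*}
and since $k+\varrho>0$, the geometric sum is bounded by $C\,2^{J(k+\varrho)}$. Symmetrically,
\begin{equation*}
\sum_{j>J}2^{jk}\|\dot{\Delta}_{j}f\|_{L^{2}}
=\sum_{j>J}2^{-jm}\bigl(2^{j(k+m)}\|\dot{\Delta}_{j}f\|_{L^{2}}\bigr)
\leq\|f\|_{\dot{B}^{k+m}_{2,\infty}}\sum_{j>J}2^{-jm},
\end{equation*}
and since $m>0$, this is bounded by $C\,2^{-Jm}\|f\|_{\dot{B}^{k+m}_{2,\infty}}$. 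Combining the two tails yields
\begin{equation*}
\|f\|_{\dot{B}^{k}_{2,1}}\lesssim 2^{J(k+\varrho)}\|f\|_{\dot{B}^{-\varrho}_{2,\infty}}+2^{-Jm}\|f\|_{\dot{B}^{k+m}_{2,\infty}}.
\end{equation*}

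The final step is to balance the two terms: one solves
\begin{equation*}
2^{J(k+\varrho+m)}\approx \frac{\|f\|_{\dot{B}^{k+m}_{2,\infty}}}{\|f\|_{\dot{B}^{-\varrho}_{2,\infty}}},
\end{equation*}
and picks $J\in\mathbb{Z}$ to be the integer part of the corresponding real number (which only costs a harmless multiplicative constant). Substituting back gives exactly the exponent $\theta=(k+\varrho)/(k+\varrho+m)$ on $\|f\|_{\dot{B}^{k+m}_{2,\infty}}$ and $1-\theta$ on $\|f\|_{\dot{B}^{-\varrho}_{2,\infty}}$, as claimed. The only mild technical point to watch is the degenerate case when one of the two norms vanishes or when $\|f\|_{\dot{B}^{-\varrho}_{2,\infty}}=0$: in that case $f\equiv 0$ modulo polynomials and the inequality is trivial, so no essential difficulty arises. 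Strictly speaking there is no real ``hard step''; the only subtlety is arranging the geometric sums so that the conditions $k+\varrho>0$ and $m>0$ are used in the correct places.
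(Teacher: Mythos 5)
Your argument is correct and is exactly the standard dyadic-split-plus-optimization proof that the paper relies on (it does not reprove Lemma \ref{lem6.1} but defers to the reference \cite{SS}, where the same decomposition at a threshold frequency $J$ and balancing of the two geometric tails is used). The hypotheses $k+\varrho>0$ and $m>0$ are invoked in the right places, and your handling of the degenerate cases and of rounding $J$ to an integer is fine.
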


\begin{lem}\label{lem6.2}
Suppose $k\geq0$ and $m,\varrho>0$.  Then the following inequality holds
\begin{eqnarray}
\|\Lambda^{k}f\|_{L^2}\lesssim \|\Lambda^{k+m}f\|^{\theta}_{L^2}\|f\|^{1-\theta}_{\dot{B}^{-\varrho}_{2,\infty}} \ \  \mbox{with}\ \ \ \theta=\frac{\varrho+k}{\varrho+k+m}, \label{R-E94}
\end{eqnarray}
where (\ref{R-E94}) is also true for $\partial^{\alpha}$ with $|\alpha|=k\ (k$ nonnegative integer).
\end{lem}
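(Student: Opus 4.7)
My plan is to prove the inequality by splitting the Fourier side into a low-frequency part controlled by $\|f\|_{\dot{B}^{-\varrho}_{2,\infty}}$ and a high-frequency part controlled by $\|\Lambda^{k+m}f\|_{L^2}$, and then balancing the cutoff. This is the real-interpolation philosophy written in the Littlewood--Paley language, and the sharp exponent comes out automatically from a dimensional check.

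First I would use Plancherel together with the near-orthogonality of the homogeneous dyadic blocks to write $\|\Lambda^k f\|_{L^2}^2\approx\sum_{j\in\mathbb{Z}}2^{2jk}\|\dot\Delta_j f\|_{L^2}^2$. Fixing an integer threshold $J$ to be chosen later, I split the sum at $j=J$. On the low-frequency side $j\le J$, the definition of $\dot{B}^{-\varrho}_{2,\infty}$ gives $\|\dot\Delta_j f\|_{L^2}\le 2^{j\varrho}\|f\|_{\dot{B}^{-\varrho}_{2,\infty}}$, and the ensuing geometric series in $j$ (whose common ratio $2^{2(k+\varrho)}>1$ is dominated by its last term) is bounded by $2^{2J(k+\varrho)}\|f\|_{\dot{B}^{-\varrho}_{2,\infty}}^2$. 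On the high-frequency side $j>J$, Bernstein's Lemma \ref{lem2.1}(ii) gives $\|\dot\Delta_j f\|_{L^2}\approx 2^{-j(k+m)}\|\dot\Delta_j\Lambda^{k+m}f\|_{L^2}$, so that piece is bounded by $2^{-2Jm}\|\Lambda^{k+m}f\|_{L^2}^2$.

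Next I would balance by choosing $J$ so that $2^{2J(\varrho+k+m)}$ matches the ratio $\|\Lambda^{k+m}f\|_{L^2}^2/\|f\|_{\dot{B}^{-\varrho}_{2,\infty}}^2$; this forces the two bounds to coincide. Substituting back and taking square roots produces
\begin{equation*}
\|\Lambda^k f\|_{L^2}\,\lesssim\,\|\Lambda^{k+m}f\|_{L^2}^{\theta}\,\|f\|_{\dot{B}^{-\varrho}_{2,\infty}}^{1-\theta},\qquad \theta=\frac{\varrho+k}{\varrho+k+m},
\end{equation*}
in agreement with the exponent predicted by the scaling check $\theta(k+m)-(1-\theta)\varrho=k$. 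The degenerate cases where either right-hand factor vanishes are handled by sending $J\to\pm\infty$. Since $J$ must be an integer, I take $J=\lfloor J^\ast\rfloor$ for the real optimum $J^\ast$, which perturbs each bound only by a universal multiplicative constant. For the companion statement with $\partial^\alpha$, $|\alpha|=k$, the pointwise Fourier-multiplier bound $|\xi^\alpha|\le|\xi|^k$ yields $\|\partial^\alpha f\|_{L^2}\le\|\Lambda^k f\|_{L^2}$, and chaining with the inequality just proved immediately gives the analogue with $\partial^\alpha$ on the left.

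The argument is entirely routine and I do not anticipate a genuine obstacle. The only mildly delicate point is that the low-frequency bound must use the pointwise estimate $\|\dot\Delta_j f\|_{L^2}\le 2^{j\varrho}\|f\|_{\dot{B}^{-\varrho}_{2,\infty}}$ rather than any $\ell^2$-type combination, so that the $\ell^\infty$ nature of the weakest Besov norm is preserved when the geometric series is summed; this is precisely what makes the low-regularity space $\dot B^{-\varrho}_{2,\infty}$ (and not, say, $\dot B^{-\varrho}_{2,2}$) the right endpoint for the interpolation.
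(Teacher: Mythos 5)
Your proof is correct. The paper itself offers no proof of Lemma \ref{lem6.2} — it is stated in the appendix with a pointer to the reference \cite{SS} — and your argument (dyadic splitting of $\|\Lambda^k f\|_{L^2}^2\approx\sum_j 2^{2jk}\|\dot\Delta_j f\|_{L^2}^2$ at a threshold $J$, bounding the low frequencies by the $\ell^\infty$-type $\dot B^{-\varrho}_{2,\infty}$ norm and the high frequencies by $\|\Lambda^{k+m}f\|_{L^2}$, then optimizing over $J$) is precisely the standard route taken there; the exponent $\theta=\frac{\varrho+k}{\varrho+k+m}$ and the reduction of the $\partial^\alpha$ case via $|\xi^\alpha|\le|\xi|^{|\alpha|}$ both check out.
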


\begin{lem}\label{lem6.3}
Suppose that $m\neq\varrho$. Then the following inequality holds
\begin{eqnarray}
\|f\|_{\dot{B}^{k}_{p,1}}\lesssim \|f\|^{1-\theta}_{\dot{B}^{m}_{r,\infty}}\|f\|^{\theta}_{\dot{B}^{\varrho}_{r,\infty}}, \label{R-E95}
\end{eqnarray}
where $0<\theta<1$,  $1\leq r\leq p\leq\infty$ and $$k+n\Big(\frac{1}{r}-\frac{1}{p}\Big)=m(1-\theta)+\varrho\theta.$$
\end{lem}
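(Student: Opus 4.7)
The plan is to prove Lemma \ref{lem6.3} by the standard dyadic balancing trick for Besov interpolation. First I would apply the homogeneous Littlewood-Paley decomposition to $f$ and estimate each block $\dot{\Delta}_j f$ in $L^p$ by invoking the Bernstein inequality (Lemma \ref{lem2.1}), which gives
\begin{equation*}
\|\dot{\Delta}_j f\|_{L^p}\lesssim 2^{jn(1/r-1/p)}\|\dot{\Delta}_j f\|_{L^r},
\end{equation*}
and then bounding the right-hand side in two distinct ways using the very definition of the $\dot{B}^m_{r,\infty}$ and $\dot{B}^\varrho_{r,\infty}$ norms:
\begin{equation*}
\|\dot{\Delta}_j f\|_{L^r}\leq 2^{-jm}\|f\|_{\dot{B}^m_{r,\infty}},\qquad \|\dot{\Delta}_j f\|_{L^r}\leq 2^{-j\varrho}\|f\|_{\dot{B}^\varrho_{r,\infty}}.
\end{equation*}
Writing $K:=k+n(1/r-1/p)=m(1-\theta)+\varrho\theta$, this yields the pointwise (in $j$) estimate
\begin{equation*}
2^{jk}\|\dot{\Delta}_j f\|_{L^p}\lesssim \min\!\bigl(2^{j(K-m)}\|f\|_{\dot{B}^m_{r,\infty}},\; 2^{j(K-\varrho)}\|f\|_{\dot{B}^\varrho_{r,\infty}}\bigr).
\end{equation*}

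Next I would assume without loss of generality that $m<\varrho$ (the other case being symmetric), so that $K-m=\theta(\varrho-m)>0$ and $K-\varrho=-(1-\theta)(\varrho-m)<0$. The key step is then to split $\sum_{j\in\mathbb{Z}} 2^{jk}\|\dot{\Delta}_j f\|_{L^p}$ at the cutoff index $j_0\in\mathbb{Z}$ chosen so that the two geometric bounds balance, namely
\begin{equation*}
2^{j_0(\varrho-m)}\approx \frac{\|f\|_{\dot{B}^\varrho_{r,\infty}}}{\|f\|_{\dot{B}^m_{r,\infty}}}.
\end{equation*}
On the low-frequency portion $j\leq j_0$ I use the first bound (a convergent geometric series of ratio $2^{K-m}>1$ summing up to its largest term at $j=j_0$), and on $j>j_0$ the second bound (geometric series of ratio $2^{K-\varrho}<1$), both summing to $\lesssim 2^{j_0(K-m)}\|f\|_{\dot{B}^m_{r,\infty}}$.

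Finally, a direct calculation using $(K-m)/(\varrho-m)=\theta$ identifies this majorant with $\|f\|_{\dot{B}^m_{r,\infty}}^{1-\theta}\|f\|_{\dot{B}^\varrho_{r,\infty}}^{\theta}$, which is exactly (\ref{R-E95}). The hypothesis $m\neq\varrho$ is essential precisely to make $j_0$ well defined and to guarantee the geometric series converge. The genuinely delicate point, and the only one where care is required, is the summability on both halves: one must invoke $K-m>0$ strictly (from $\theta>0$) and $K-\varrho<0$ strictly (from $\theta<1$), so the strict inequalities $0<\theta<1$ are used in an essential way; the Bernstein step itself relies on $r\leq p$ as assumed. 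No additional technicalities are expected, since once the dyadic estimate and the balancing index are in place, the rest is elementary summation.
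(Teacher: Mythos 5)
Your proof is correct: the Bernstein step to pass from $L^p$ to $L^r$ (valid since $r\leq p$), the two competing bounds from the $\dot{B}^m_{r,\infty}$ and $\dot{B}^\varrho_{r,\infty}$ norms, and the choice of the balancing index $j_0$ with the sign conditions $K-m>0$, $K-\varrho<0$ coming from $0<\theta<1$ are exactly the standard dyadic-splitting argument. The paper itself gives no proof of Lemma \ref{lem6.3} and simply defers to the interpolation inequalities of Sohinger and Strain \cite{SS}, where the same method is used, so your argument is the expected one.
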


\begin{lem}\label{lem6.4}
Suppose that $m\neq\varrho$. One has the interpolation inequality of Gagliardo-Nirenberg-Sobolev type
\begin{eqnarray}
\|\Lambda^{k}f\|_{L^{q}}\lesssim \|\Lambda^{m}f\|^{1-\theta}_{L^{r}}\|\Lambda^{\varrho}f\|^{\theta}_{L^{r}}, \label{R-E96}
\end{eqnarray}
where $0\leq\theta\leq1$,  $1\leq r\leq q\leq\infty$ and $$k+n\Big(\frac{1}{r}-\frac{1}{q}\Big)=m(1-\theta)+\varrho\theta.$$
\end{lem}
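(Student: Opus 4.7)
The plan is to reduce Lemma \ref{lem6.4} to the Besov-space interpolation of Lemma \ref{lem6.3} through two applications of Bernstein's inequality (Lemma \ref{lem2.1}). The nontrivial case is $0<\theta<1$; the endpoints $\theta=0$ or $\theta=1$ collapse the constraint to $k+n(1/r-1/q)=m$ or $=\varrho$, which is a direct Sobolev/Bernstein embedding $\|\Lambda^{k}f\|_{L^{q}}\lesssim \|\Lambda^{m}f\|_{L^{r}}$, so I would treat this first as a warm-up and then focus on the interior case.

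First I would pass from an $L^{q}$-norm of a Riesz potential to an inhomogeneous-looking sum of $L^{r}$-Littlewood--Paley pieces. Writing $f=\sum_{j\in\mathbb{Z}}\dot{\Delta}_{j}f$ in $\mathcal{S}'_{0}$, using the triangle inequality, and invoking Lemma \ref{lem2.1}(ii) to replace $\Lambda^{k}\dot{\Delta}_{j}$ by $2^{jk}\dot{\Delta}_{j}$, followed by Lemma \ref{lem2.1}(i) with $a=r$, $b=q$, I obtain
\begin{equation*}
\|\Lambda^{k}f\|_{L^{q}}\lesssim \sum_{j\in\mathbb{Z}}2^{jk}\|\dot{\Delta}_{j}f\|_{L^{q}}\lesssim \sum_{j\in\mathbb{Z}}2^{j\bigl(k+n(\frac{1}{r}-\frac{1}{q})\bigr)}\|\dot{\Delta}_{j}f\|_{L^{r}}=\|f\|_{\dot{B}^{\,k+n(1/r-1/q)}_{r,1}}.
\end{equation*}

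Next I would apply Lemma \ref{lem6.3} with the substitutions $\tilde{p}=\tilde{r}=r$, $\tilde{k}=k+n(1/r-1/q)$, $\tilde{m}=m$, $\tilde{\varrho}=\varrho$, $\tilde{\theta}=\theta$. Since $n(\frac{1}{\tilde r}-\frac{1}{\tilde p})=0$, the balance condition of Lemma \ref{lem6.3} reads $k+n(1/r-1/q)=m(1-\theta)+\varrho\theta$, which is precisely the hypothesis of Lemma \ref{lem6.4}; this yields
\begin{equation*}
\|f\|_{\dot{B}^{\,k+n(1/r-1/q)}_{r,1}}\lesssim \|f\|^{1-\theta}_{\dot{B}^{m}_{r,\infty}}\,\|f\|^{\theta}_{\dot{B}^{\varrho}_{r,\infty}}.
\end{equation*}
Finally, I would downgrade each $\dot{B}^{\bullet}_{r,\infty}$-factor on the right to a Riesz-potential $L^{r}$-norm: by Lemma \ref{lem2.1}(ii),
\begin{equation*}
\|f\|_{\dot{B}^{m}_{r,\infty}}=\sup_{j}2^{jm}\|\dot{\Delta}_{j}f\|_{L^{r}}\approx \sup_{j}\|\dot{\Delta}_{j}\Lambda^{m}f\|_{L^{r}}\lesssim \|\Lambda^{m}f\|_{L^{r}},
\end{equation*}
where the last step uses uniform-in-$j$ $L^{r}$-boundedness of the Fourier multipliers $\varphi(2^{-j}D)$ via Young's inequality. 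The identical bound holds with $m$ replaced by $\varrho$. Chaining the three displays produces (\ref{R-E96}).

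The main technical obstacle is step one in the corner case $q=\infty$, where $\sum_{j}\dot{\Delta}_{j}f$ need not converge in $L^{\infty}$. I would circumvent this by proving the inequality on Schwartz functions with the usual density/approximation argument (or by restricting to $\mathcal{S}'_{0}$-distributions for which the RHS of (\ref{R-E96}) is finite and the dyadic expansion converges in the sense used above, which is the standard Chemin--Lerner convention of \cite{BCD}). A secondary but minor point is that the use of Lemma \ref{lem6.3} tacitly requires $m\neq\varrho$, which is exactly the hypothesis of Lemma \ref{lem6.4}, and guarantees that $\theta$ is uniquely determined by the scaling identity.
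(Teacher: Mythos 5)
The paper does not actually prove Lemma \ref{lem6.4}: the appendix merely collects Lemmas \ref{lem6.1}--\ref{lem6.5} and refers to Sohinger--Strain \cite{SS} for the arguments. Your reconstruction --- triangle inequality over the dyadic blocks, Lemma \ref{lem2.1}(ii) and then \ref{lem2.1}(i) to land in $\dot{B}^{\,k+n(1/r-1/q)}_{r,1}$, Lemma \ref{lem6.3} with $p=r$, and the uniform $L^{r}$-boundedness of $\varphi(2^{-j}D)$ to pass from $\dot{B}^{m}_{r,\infty}$ and $\dot{B}^{\varrho}_{r,\infty}$ back to $\|\Lambda^{m}f\|_{L^{r}}$ and $\|\Lambda^{\varrho}f\|_{L^{r}}$ --- is the standard route and is correct; the scaling bookkeeping matches the hypothesis exactly.

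One caveat worth being more careful about than your ``warm-up'' remark suggests: at the endpoints $\theta=0$ or $\theta=1$ the statement degenerates to the critical homogeneous Sobolev embedding $\|\Lambda^{k}f\|_{L^{q}}\lesssim\|\Lambda^{m}f\|_{L^{r}}$ with $m-k=n(1/r-1/q)$, which genuinely fails in the corner cases $q=\infty$ (and $r=1$); Lemma \ref{lem6.3} is of no help there since it requires $0<\theta<1$. This defect is inherited from the lemma as stated (with $0\leq\theta\leq1$) rather than introduced by your proof, and it is harmless for the paper's applications, where the lemma is only invoked with $0<\theta<1$ and finite exponents (see (\ref{R-E43})). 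Your handling of the $q=\infty$ convergence issue for the dyadic sum via density, or the $\mathcal{S}'_{0}$ convention of \cite{BCD}, is appropriate.
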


\begin{lem}\label{lem6.5}
Suppose that $\varrho>0$ and $1\leq p<2$. It holds that
\begin{eqnarray}
\|f\|_{\dot{B}^{-\varrho}_{r,\infty}}\lesssim \|f\|_{L^{p}} \label{R-E97}
\end{eqnarray}
with $1/p-1/r=\varrho/n$. In particular, this holds with $\varrho=n/2, r=2$ and $p=1$.
\end{lem}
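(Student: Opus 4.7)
The plan is to prove Lemma \ref{lem6.5} by combining the definition of the homogeneous Besov norm with Young's convolution inequality applied to the Littlewood-Paley block, using scaling to extract the prefactor $2^{j\varrho}$.

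First, I would unfold the norm: by Definition \ref{defn2.1},
\[
\|f\|_{\dot{B}^{-\varrho}_{r,\infty}} = \sup_{j\in\mathbb{Z}} 2^{-j\varrho} \|\dot{\Delta}_j f\|_{L^r},
\]
so it suffices to establish the dyadic-frequency bound $\|\dot{\Delta}_j f\|_{L^r} \lesssim 2^{j\varrho} \|f\|_{L^p}$ uniformly in $j\in\mathbb{Z}$. Writing $\dot{\Delta}_j f = h_j * f$ with $h_j(x) = 2^{jn}\check{\varphi}(2^j x)$ and $\check{\varphi}=\mathcal{F}^{-1}\varphi$, I would apply Young's convolution inequality
\[
\|h_j * f\|_{L^r} \leq \|h_j\|_{L^q}\,\|f\|_{L^p}, \qquad 1 + \tfrac{1}{r} = \tfrac{1}{q} + \tfrac{1}{p}.
\]
The hypothesis $1/p - 1/r = \varrho/n$ forces $1/q = 1 - \varrho/n$. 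Since $1\leq p<2$ and $\varrho>0$, one checks $\varrho/n \leq 1/p \leq 1$, so $q\in[1,\infty]$ and Young's inequality applies.

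Next I would compute $\|h_j\|_{L^q}$ by change of variables:
\[
\|h_j\|_{L^q} = 2^{jn} \bigl(2^{-jn}\bigr)^{1/q} \|\check{\varphi}\|_{L^q} = 2^{jn(1-1/q)} \|\check{\varphi}\|_{L^q} = 2^{j\varrho} \|\check{\varphi}\|_{L^q}.
\]
Because $\varphi \in \mathcal{S}(\mathbb{R}^n)$ is compactly supported away from infinity, $\check{\varphi}$ belongs to the Schwartz class and hence to $L^q$ for every $q\in[1,\infty]$, so $\|\check{\varphi}\|_{L^q} \lesssim 1$. Combining with Young yields $\|\dot{\Delta}_j f\|_{L^r} \lesssim 2^{j\varrho} \|f\|_{L^p}$, which is exactly the desired block estimate.

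Finally, multiplying through by $2^{-j\varrho}$ and taking the supremum over $j\in\mathbb{Z}$ gives $\|f\|_{\dot{B}^{-\varrho}_{r,\infty}} \lesssim \|f\|_{L^p}$, and specializing to $p=1$, $r=2$, $\varrho=n/2$ recovers the particular case. The only delicate point is verifying that the Young exponent $q$ is admissible, but this follows automatically from the Sobolev scaling $1/p - 1/r = \varrho/n$ together with $p\geq 1$; no obstacle beyond careful bookkeeping is anticipated.
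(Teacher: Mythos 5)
Your proof is correct and is the standard argument: the paper itself gives no proof of Lemma \ref{lem6.5} (it is quoted from the reference [SS]), and your Young's-inequality computation on each dyadic block is precisely the proof of the Bernstein estimate $\|\dot{\Delta}_j f\|_{L^r}\lesssim 2^{jn(1/p-1/r)}\|f\|_{L^p}$ that the cited source (and the paper's own Lemma \ref{lem2.1}(i) combined with $\|\dot{\Delta}_j f\|_{L^p}\lesssim\|f\|_{L^p}$) relies on. The exponent bookkeeping is right — $1/q=1-\varrho/n\in[0,1)$ follows automatically from $\varrho/n=1/p-1/r\le 1/p\le 1$ — and the hypothesis $p<2$ plays no role in the argument, as you implicitly noticed.
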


\section*{Acknowledgments}
J. Xu is partially supported by the National
Natural Science Foundation of China (11471158), the Program for New Century Excellent
Talents in University (NCET-13-0857) and the NUAA Fundamental
Research Funds (NS2013076).  The work is also partially supported by
Grant-in-Aid for Scientific Researches (S) 25220702 and (A) 22244009.

\end{document}